\newtheorem{theorem}{Theorem}[section]
\newtheorem{proposition}[theorem]{Proposition}
\newtheorem{lemma}[theorem]{Lemma}
\newtheorem{corollary}[theorem]{Corollary}
\theoremstyle{definition}
\newtheorem{definition}[theorem]{Definition}
\newtheorem{remark}[theorem]{Remark}
\newtheorem{remarks}[theorem]{Remarks}
\numberwithin{equation}{section}
\newenvironment{altenumerate}
   {\begin{list}
      {\textup{(\theenumi)} }
      {\usecounter{enumi}
       \setlength{\labelwidth}{0pt}
       \setlength{\labelsep}{0pt}
       \setlength{\leftmargin}{0pt}
       \setlength{\itemsep}{\the\smallskipamount}
       \renewcommand{\theenumi}{\roman{enumi}}
      }}
   {\end{list}}
\newenvironment{altitemize}
   {\begin{list}
      {$\bullet$}
      {\setlength{\labelwidth}{0pt}
	   \setlength{\itemindent}{5pt}
       \setlength{\labelsep}{5pt}
       \setlength{\leftmargin}{0pt}
       \setlength{\itemsep}{\the\smallskipamount}
      }}
   {\end{list}}
\numberwithin{equation}{section}
\newcommand{\BA}{{\mathbb {A}}}
\newcommand{\BF}{{\mathbb {F}}}
\newcommand{\BN}{{\mathbb {N}}}
\newcommand{\BP}{{\mathbb {P}}}
\newcommand{\CA}{{\mathcal {A}}}
\newcommand{\CB}{{\mathcal {B}}}
\newcommand{\CF}{{\mathcal {F}}}
\newcommand{\CI}{{\mathcal {I}}}
\newcommand{\CJ}{{\mathcal {J}}}
\newcommand{\CL}{{\mathcal {L}}}
\newcommand{\CO}{{\mathcal {O}}}
\newcommand{\CP}{{\mathcal {P}}}
\newcommand{\CQ}{{\mathcal {Q}}}
\newcommand{\CU}{{\mathcal {U}}}
\newcommand{\RX}{{\mathrm {X}}}
\newcommand{\Ru}{{\mathrm {u}}}
\newcommand{\bB}{{\mathrm{\textbf{B}}}}
\newcommand{\bP}{{\mathrm{\textbf{P}}}}
\newcommand{\bQ}{{\mathrm{\textbf{Q}}}}
\newcommand{\Fm}{{\mathfrak {m}}}
\newcommand{\Fp}{{\mathfrak {p}}}
\newcommand{\Fq}{{\mathfrak {q}}}
\newcommand{\FP}{{\mathfrak {P}}}
\newcommand{\Bl}{{\mathrm{Bl}}}
\newcommand{\bigslant}[2]{{\left.{#1}\middle/{#2}\right.}}
\renewcommand{\dim}{{\mathrm{dim}}}
\newcommand{\defeq}{\vcentcolon=}
\newcommand{\eqdef}{=\vcentcolon}
\newcommand{\Gal}{{\mathrm{Gal}}}
\newcommand{\GL}{{\mathrm{GL}}}
\newcommand{\Ker}{{\mathrm{Ker}}}
\newcommand{\lra}{\longrightarrow}
\newcommand{\ov}[1]{\overline{#1}} 
\newcommand{\pr}{{\mathrm{pr}}}
\newcommand{\Proj}{{\mathrm{Proj \;}}}
\newcommand{\PGL}{ {\mathrm{PGL} } }
\newcommand{\res}[1]{{\!\,\mid_{#1}}}
\newcommand{\Spec}{{\mathrm{Spec \;}}}
\newcommand{\Span}{{\mathrm{span}}}
\newcommand{\nsubset}{{\not\subset}}
\newcommand{\unts}{^{\times}}
\newcommand{\ul}[1]{{\underline{#1}}}
\newcommand{\wt}[1]{{\widetilde{#1}}}
\newcommand{\alggen}[3]{{ \Delta^{#1}_{{#2} \setminus {#3}} }}
\newcommand{\idealgen}[3]{{ \Delta^{#1}_{{#2} \setminus {#3}} }}
\let\originalmiddle\middle
\renewcommand{\middle}[1]{\,\originalmiddle#1\,}
\newlength{\arrow}
\newlength{\mysize}
\newcommand*{\myrightarrow}[1]{\xrightarrow{\mathmakebox[\mysize]{#1}}}
\newcommand{\myhookrightarrow}[1]{ \xhookrightarrow{\mathmakebox[\mysize]{#1}}}
\newlength{\mysizee}
\newcommand{\myrightarrowdbl}[1]{\xrightarrow{ \mathmakebox[\mysize]{#1} } \mathrel{\mkern-20.5mu}\rightarrow\mathrel{\mkern+2.5mu}}
\title{Compactifications of the Drinfeld half space over a Finite Field}
\author{Georg Linden}
\address{Fachgruppe Mathematik und Informatik, Bergische Universität Wuppertal, Gaußstrasse 20, 42119 Wuppertal, Germany}
\email{linden@math.uni-wuppertal.de}
\date{April 18, 2018}
\begin{document}

\begin{abstract}
	When considered as a Deligne--Lusztig variety, the Drinfeld half space $\Omega_V$ over a finite field $k$ has a compactification whose boundary divisor is normal crossing and which can be obtained by successively blowing-up projective space along linear subspaces.
	Pink and Schieder \cite{PS14} have introduced a new compactification of $\Omega_V$ whose strata of the boundary are glued together in a way dual to the way they are for the tautological compactification by projective space.
	We show that by applying an analogous sequence of blow-ups to this new compactification we arrive at the compactification by Deligne and Lusztig as well.
	Moreover, we compute for each of these three compactifications the stabilizers of $\bar{k}$-valued points under the canonical $\PGL(V)$-action.
	We find that in each case the stratification can be recovered from the unipotent radicals of these stabilizers.
\end{abstract}

\maketitle

\tableofcontents

\section{Introduction}

Let $V$ be a finite-dimensional vector space over a finite field $k$.
We use the convention that $\bP_V = \Proj\!( \mathrm{Sym} \, V ) $ denotes the projective space of hyperplanes in $V$.
The Drinfeld half space $\Omega_V$ is defined as the open subvariety of $\bP_V$ consisting of the hyperplanes that do not contain any $k$-rational line of $V$.
In other words, $\Omega_V$ is the complement of all $k$-rational hyperplanes
\[ \Omega_V = \bP_V \setminus \; \bigcup_{\mathclap{\substack{W \subset V, \\ \dim_k \, W =1 }}} \; \bP_{V/W} , \]
and hence in particular affine.

When considered as a subvariety of a complete flag variety, the Drinfeld half space is a basic example of a Deligne--Lusztig variety \cite[2.2]{DL76}.
As such, it has a compactification as defined in \cite[9.10]{DL76} which we denote by $\bar{\Omega}_V$.
This compactification is not isomorphic to the tautological compactification $\Omega_V \subset \bP_V$; nevertheless, it is closely related to $\bP_V$.
Let $\wt{\bP}_V$ denote the variety obtained by successively blowing-up along the strict transforms of $k$-linear subspaces of $\bP_V$ with increasing dimension.
Then $\bar{\Omega}_V$ is isomorphic to $\wt{\bP}_V$ \cite[Prop.\ 9.1]{L17}, \cite[4.1.2]{W14}.
The variety $\wt{\bP}_V$ is also used in \cite{RTW13} to show that every $k$-automorphism of $\Omega_V$ extends to a $k$-automorphism of $\bP_V$:
First the authors prove that every $k$-automorphism of $\Omega_V$ extends to a $k$-automorphism of $\wt{\bP}_V$ and in a second step that such an automorphism descends to a $k$-automorphism of $\bP_V$.
Moreover, in \cite{L17} the author uses a different approach in the first step to show that every separable and dominant $k$-endomorphism of $\Omega_V$ extends to a $k$-automorphism of $\wt{\bP}_V$.

Another compactification of $\Omega_V$ was introduced and investigated by Schieder \cite{S09}, and Pink and Schieder \cite{PS14}:
Let $R_V$ denote the sub-$k$-algebra of the field of fractions $\mathrm{Frac} \, (\mathrm{Sym}\,V)$ generated by the elements $\frac{1}{v}$, for $v\in V \setminus \{0\}$, i.e.\
\[ R_V = k \! \left[ \frac{1}{v} \middle| v \in V\setminus \{0\} \right] .\]
Setting $\deg (\frac{1}{v}) =1 $ gives $R_V$ the structure of a graded $k$-algebra, and one defines
\[ \bQ_V = \Proj R_V .\]
Then the Drinfeld half space $\Omega_V$ embeds into $\bQ_V$ as the open affine subvariety obtained by inverting all elements $\frac{1}{v} \in R_V$, for $v\in V\setminus \{0\}$.

We consider these three compactifications in more detail in section \ref{Sect - Comparison of Compactifications}.
We recall the definition of a Deligne--Lusztig variety, describe $\Omega_V$ as such, and mention some properties of the variety $\bQ_V$ that are shown in \cite{PS14}.
Furthermore, we state the natural stratifications of $\bP_V$ and $\bQ_V$ which are indexed by quotients respectively subspaces of $V$.
We proceed by constructing a variety $\wt{\bQ}_V$ by an analogous process of successive blow-ups of $\bQ_V$.
Our main result of this section is that there exists an isomorphism $\wt{\bP}_V \cong \wt{\bQ}_V$ which is the identity when restricted to $\Omega_V$ (Theorem \ref{Thm - P Q Isomorphism}).
This extends a result by Schieder \cite[Thm.\ 5.2]{S09} who proved this for $\dim_k \, V \leq 3$.
Moreover, we describe the natural stratification of $\bar{\Omega}_V$ which is indexed by flags of $V$.

In Section \ref{Sect - Modular interpretation}, we begin by recalling the modular interpretation of $\bQ_V$ that Pink and Schieder gave \cite[Ch.\ 7]{PS14}:
Similarly to the way that $\bP_V$ represents the functor of invertible sheaves with generating global sections, the $k$-scheme $\bQ_V$ represents a functor of invertible sheaves and so called reciprocal maps.
Pink and Schieder also construct a compactification of $\Omega_V$ which dominates both $\bP_V$ and $\bQ_V$ and is a desingularization of the latter one by defining a functor $B_V$ related to those of $\bP_V$ and $\bQ_V$, and proving that it is representable.
We complement this by showing that this functor $B_V$ is in fact the functor of points of $\wt{\bP}_V$.
Hence $\bB_V \defeq \wt{\bP}_V \cong \wt{\bQ}_V$ is isomorphic to the desingularization by Pink and Schieder.
Let $\bar{k}$ be an algebraic closure of $k$.
For $\bar{k}$-valued points of $\bP_V$, $\bQ_V$ and $\bB_V$, we also describe in which stratum they lie, in terms of their modular interpretation.

The set of $\bar{k}$-valued points of each of $\bP_V$, $\bQ_V$ and $\bB_V$ comes with a canonical $\PGL(V)$-action (where $\PGL(V) = \mathbf{PGL}(V)(k)$).
In Theorem \ref{Thm - Stabilizers}, we determine the stabilizers under this $\PGL(V)$-action for each $\bar{k}$-valued point in all three cases.
We conclude in Corollary \ref{Cor - Stabilizers and Stratification} that the stratifications of $\bP_V$, $\bQ_V$ and $\bB_V$ can be recovered from the unipotent radicals of the stabilizers of its $\bar{k}$-valued points.

I would like to thank M.\ ~Rapoport for suggesting this very interesting topic as well as for his guidance and advice.
I also would like to thank S.\ ~Orlik for helpful remarks.

\section{Comparison of Compactifications}\label{Sect - Comparison of Compactifications}

\subsection{$\Omega_V$ as a Deligne--Lusztig variety}

Let us begin with a brief recollection of the definition of a Deligne--Lusztig variety and its compactification as in \cite{DL76}.
Let $k= \BF_q$ be a finite field, and fix an algebraic closure $\bar{k}$ of $k$.
Let $G$ be a connected reductive algebraic group over $\bar{k}$, obtained by extensions of scalars from $G_0$ over $k$, and denote by $F \colon G \lra G$ the corresponding Frobenius endomorphism.
Fix a maximal torus $T$ of $G$ and a Borel subgroup $B$ containing $T$ and denote the associated Weyl group by $W = N(T)/T$.
Let $X$ be the set of all Borel subgroups of $G$.
Then $G$ acts on $X$ by conjugation, and this gives a natural isomorphism
\begin{align*}
	G/B	\myrightarrow{\sim}	X	, \;	gB	\longmapsto		g B g^{-1} 
\end{align*}
which gives $X$ the structure of a projective algebraic variety.
We can identify the Weyl group $W$ with the set of orbits of $G$ in $X\times X$.
Namely for $w\in W$, let $O(w) \defeq G \ldotp ( B, \dot{w}B \dot{w}^{-1} ) $ where $\dot{w} \in N(T)$ is a representative of $w$.
Two Borel subgroups $B', B''$ of $G$ are said to be \textit{in relative position $w$}, for $w\in W$, if $\left(B',B'' \right) \in O(w)$.

Then the \textit{Deligne--Lusztig variety} $X(w) \subset X$ is defined as the locally closed subvariety of $X$ consisting of all Borel subgroups $B$ of $G$ such that $B$ and $F(B)$ are in relative position $w$.
Note that the action of $G$ on $X$ induces an action of $G^{F}= G_0(k)$ on $X(w)$.

Deligne and Lusztig construct a compactification of $X(w)$ in the following way \cite[9.10]{DL76}:
Consider a minimal expression $w=s_1 \cdots s_m$ in the Weyl group.
Then
\begin{equation}\label{Eq - Compactification of DL-variety}
\begin{aligned}
	\bar{X}(w) \defeq& \left\{ \left( B_0,\ldots,B_r \right) \in X^{r+1} \middle|\begin{array}{l}
																					B_r = F(B_0), \forall 1 \leq i \leq r \colon  \\
																					\left( B_{i-1}, B_{i} \right) \in O(s_i) \text{ or } \left( B_{i-1}, B_{i} \right) \in O(e)
																				\end{array}    \right\}											\\
		\cup	\;\qquad	&\qquad \qquad \qquad \qquad   \hspace{19.2pt} \cup																\\
	X(w) \, \cong&  \left\{ \left( B_0,\ldots,B_r \right) \in X^{r+1} \middle|\begin{array}{l}
																		B_r = F(B_0), \forall 1 \leq i \leq r \colon  \\
																		\left( B_{i-1}, B_{i} \right) \in O(s_i)
																		\end{array}    \right\}	
\end{aligned}
\end{equation}
is a smooth projective compactification of $X(w)$ (depending on the minimal expression for $w$) and the boundary divisor $\bar{X}(w)\setminus X(w)$ has normal crossings \cite[Lemma 9.11]{DL76}.

Let $V$ be a $k$-vector space of dimension $n+1 \geq 2$.
To match the terminology of \cite{DL76}, we consider the Drinfeld half space in the equivalent dual situation, i.e.\ in the projective space of lines $\bP_{V^{\ast}}$ we consider the open subvariety $\Omega_{V^{\ast}}$ of lines that are not contained in any $k$-rational hyperplane.

Then the Drinfeld half space $\Omega_{V^{\ast}}$ occurs as the following example of a Deligne--Lusztig variety:
Let $G =\mathbf{GL}(V_{\bar{k}})= \mathbf{GL}(V)_{\bar{k}}$.
For a basis $e_1,\ldots,e_{n+1}$ of $V$, choose the group of diagonal matrices with respect to this basis as maximal torus $T\subset G$ and the group of upper triangular matrices as $B \subset G$.
As an element of the Weyl group $W \cong \mathfrak{S}_{n+1}$ of $G$ consider $w = (1,\ldots, n+1)$.
We can represent $W$ by the subgroup of permutation matrices in $N(T)$, and the fundamental reflections are the transpositions $(i, i+1)$, $i=1,\ldots,n$.
Furthermore, $X$ is the variety $\mathrm{Flag}_{V_{\bar{k}}}$ of complete flags $\left( \{0\}\subset V_1 \subset\ldots\subset V_{n} \subset V_{\bar{k}} \right)$ of $V_{\bar{k}}$.
For $w= (1,\ldots, n+1)$, two flags $\CF$ and $\CF'$ are in relative position $w$ if and only if 
\[ V_i^{\prime} + V_i = V_{i+1} \, \text{, for $i = 1,\ldots n$, and } \, V_{n}^{\prime} + V_{1} = V_{\bar{k}} .\]
For $\CF$ and $\CF' = F(\CF)$, this is equivalent to the condition that
\[ V_{i} = \bigoplus_{j=0}^{i-1} F^j (V_1) \, \text{, for $i=1,\ldots,n+1$, } \]
where we write $V_{n+1}=V_{\bar{k}}$.
Now, consider the projection
\begin{align}\label{Eq - complete flag projection proj space}
	\mathrm{Flag}_{V_{\bar{k}}} \lra \bP_{V^{\ast}}, \; \left( \{0\}\subset V_1 \subset\ldots\subset V_{n} \subset V_{\bar{k}} \right) \longmapsto V_1 .
\end{align}
Then this map induces an isomorphism between $\mathrm{Flag}_{V_{\bar{k}}}(w)$ and $\Omega_{V^{\ast}}$.

We will denote the compactification of $\mathrm{Flag}_{V_{\bar{k}}}(w)$ according to \eqref{Eq - Compactification of DL-variety} with respect to the minimal expression $w=(1,2)\cdots (n,n+1)$ by $\bar{\Omega}_{V^{\ast}}$.
We remark that the closure of $\Omega_{V^{\ast}}$ embedded into $\mathrm{Flag}_{V_{\bar{k}}}$ via $\Omega_{V^{\ast}} \cong \mathrm{Flag}_{V_{\bar{k}}}(w)$ is already isomorphic to $\bar{\Omega}_{V^{\ast}}$ because $w$ is a Coxeter element \cite[Lemma 1.9]{H99}.

\subsection{$\bP_V$ as a compactification of $\Omega_V$}

Of course, by the definition of the Drinfeld half space, the projective space $\bP_{V}$ also constitutes a compactification of $\Omega_{V}$; then $\Omega_V$ is the dense open subset
\[\Omega_{V} = \Spec k \! \left[ \frac{v}{w} \middle| v \in V, w \in V\setminus \{0\} \right] \subset \Proj k \! \left[ v \middle| v \in V \right] = \bP_{V}, \]
i.e.\ the open subset where the function $\prod_{v \in V\setminus \{0\}} v$ does not vanish.
However, the boundary-divisor
\[\bP_{V}\setminus \Omega_V = \; \bigcup_{\mathclap{\substack{W \subset V, \\ \dim_k \, W =1 }}} \; \bP_{V/W}\]
does not have normal crossings for $\dim_k \, V = n+1 > 2$: 
Let $\bP_{V/V'}$, with $\dim_k \, V' = n$, be a $k$-rational point of $\bP_V$.
Then the irreducible components $\bP_{V/W}$ of the boundary-divisor which intersect in $\bP_{V/V'}$ are those with $W \subset V'$ and there are exactly
\[ \binom{n}{n-1}_q =  q^{n-1} +\ldots+ q +1 > n \]
of them.

\subsection{$\bQ_V$ as a compactification of $\Omega_V$}

Another compactification of $\Omega_V$ has been investigated by Schieder in \cite{S09}, and Pink and Schieder in \cite{PS14}.
It is defined as
\[ \bQ_{V} = \Proj R_V , \]
where
\[R_V = k \! \left[ \frac{1}{v} \middle| v \in V\setminus \{0\} \right]\]
is viewed as a sub-$k$-algebra of the field of fractions $ \mathrm{Frac} \, k \! \left[ v \middle| v\in V \right] $, endowed with the grading $\deg \frac{1}{v} = 1$.
Then the Drinfeld half space embeds into $\bQ_{V}$ as the dense open subset where the function $\prod_{v \in V\setminus \{0\}} \frac{1}{v} $ does not vanish, i.e.\
\[ \Omega_{V} = \Spec k \!\left[ \frac{v}{w} \middle| v \in V, w \in V\setminus \{0\} \right] \subset \bQ_{V}. \]
We mention some properties of the variety $\bQ_V$:
The only relations between the generators of $R_V$ are
\begin{align*}
	\frac{1}{\lambda v} &= \lambda^{-1} \frac{1}{v}	\, ,					&\,&\text{ for all $v\in V\setminus \{0\}$, $\lambda \in k$, and}			\\
	\frac{1}{v} \frac{1}{v'} &= \frac{1}{v+v'}\left( \frac{1}{v} + \frac{1}{v'} \right) ,	&\,&\text{ for all $v, v' \in V\setminus \{0\}$, such that $v+v' \neq 0$}
\end{align*}
\cite[Thm.\ 1.6]{PS14}.
This gives an embedding of $\bQ_V$ as a closed subset of $ \BP_k^{q^{n+1}-2}$.
Furthermore, $\bQ_V$ is integral, Cohen-Macaulay and projectively normal \cite[Thm.\ 1.11]{PS14}.
Moreover, $\bQ_V$ is not regular for $\dim_k \, V > 2$; it is regular along all strata of codimension $\leq 1$, but singular along all strata of codimension $\geq 2$ \cite[Thm.\ 8.4]{PS14}.
The boundary-divisor of $\bQ_{V}$ does not have normal crossings for the same reason as for $\bP_V$.

\subsection{Stratification of $\bP_V$ and $\bQ_V$}

The varieties $\bP_{V}$ and $\bQ_{V}$ both come with a stratification by locally closed subsets whose combinatorics are dual to each other:
For a subspace $V' \subsetneq V$ there is a closed immersion $\bP_{V/V'}  \myhookrightarrow{} \bP_{V}$ which corresponds to the homogeneous ideal
\[\left( v \middle| v \in V' \right) \subset k \! \left[ v \middle| v\in V \right] . \]
We identify $\bP_{V/V'}$ with its image in $\bP_{V}$ and call it a \textit{linear subspace}.
We remark that $\dim \, \bP_{V/V'} = n - \dim_k \, V'$.
Then the stratum associated to $V'$ is defined as
\[ \CP_{V/V'} \defeq \bP_{V/V'} \setminus \; \bigcup_{\mathclap{V' \subsetneq W \subsetneq V}} \; \bP_{V/W} \subset \bP_{V}. \]
Observe that $\CP_{V/V'} = \Omega_{V/V'} \subset \bP_{V/V'}$, in particular $\CP_{V}= \Omega_{V}$.
Furthermore, the closure of $\CP_{V/V'}$ in $\bP_V$ is the disjoint union of all strata $\CP_{V/W}$, for $V' \subset W \subsetneq V$:
\[ \ov{\CP_{V/V'} } = \; \bigsqcup_{\mathclap{V' \subset W \subsetneq V}} \; \CP_{V/W} . \]

Similarly, for a subspace $\{0\} \neq V' \subset V$ one finds that
\[\left( \frac{1}{v} \middle| v \in V \setminus V' \right) \subset k \! \left[ \frac{1}{v} \middle| v \in V\setminus \{0\} \right]\]
is a homogeneous ideal \cite[Cor. 4.5]{S09} which induces a closed immersion $\bQ_{V'} \myhookrightarrow{} \bQ_V$.
Again, we identify $\bQ_{V'}$ with its image in $\bQ_V$ and call it a \textit{linear subspace}. Then $\dim \, \bQ_{V'} = \dim_k \, V' -1$.
We associate to $V'$ the stratum
\[ \CQ_{V'} \defeq \bQ_{V'} \setminus \; \bigcup_{\mathclap{ \{0\} \neq W \subsetneq V'}} \; \bQ_{W} \subset \bQ_V. \]
Then $\CQ_{V'} = \Omega_{V'} \subset \bQ_{V'}$, and the closure of $\CQ_{V'}$ in $\bQ_V$ is the disjoint union of all strata $\CQ_{W}$, for $\{0\}\neq W \subset V'$:
\[ \ov{\CQ_{V'}} = \; \bigsqcup_{\mathclap{\{0\}\neq W \subset V'}} \; \CQ_{W} . \]

\subsection{Blow-up sequences for $\bP_V$ and $\bQ_V$}

To relate $\bP_V$ and $\bQ_V$ to the compactification $\bar{\Omega}_V$ by Deligne and Lusztig, we consider the following construction, cf.\ \cite[Ch.\ ~8]{L17}, \cite[Def.\ ~1.2]{RTW13}:	
	
\begin{definition}\label{Def - P blow-up arrangement}	
	We define the \textit{blow-up of $\mathrm{\mathbf{P}}_V$ along the full arrangement of linear subspaces} to be
	\begin{align*}
		\pi \colon \wt{\bP}_V = \wt{\bP}_{n-1} \myrightarrow{\pi_{n-1}} \wt{\bP}_{n-2} \myrightarrow{\pi_{n-2}} \ldots \myrightarrow{\pi_2} \wt{\bP}_1 	\myrightarrow{\pi_1} \wt{\bP}_0 \myrightarrow{\pi_0} \wt{\bP}_{-1} = \bP_V ,
	\end{align*}
	where $\pi_i$ denotes the blow-up of $\wt{\bP}_{i-1}$ along the strict transforms of all linear subspaces $L \subset \bP_V$ with $\dim \, L = i$.
\end{definition}

Note that $\pi_{n-1}$ is an isomorphism. Furthermore, all $\pi_i$ induce an isomorphism on $\Omega_V$ and $\Omega_V \subset \wt{\bP}_V$ is a dense open subset.
Moreover, every linear subspace $\bP_{V/V'} \subset \bP_V$ gives rise to an exceptional divisor $D_{V'} \subset \wt{\bP}_V$ under the blow-up $\pi$.
Note that the divisors $D_{V'}$ and $D_{V''}$ have non-empty intersection if and only if $V' \subset V''$ or $V'' \subset V'$; otherwise the intersection $\bP_{V/V'} \cap \bP_{V/V''}$ is a linear subspace of strictly smaller dimension.
Then the strict transform of the intersection is blown-up before the ones of $\bP_{V/V'}$ or $\bP_{V/V''}$, separating those.

\begin{definition}
	Similarly, we define the \textit{blow-up of $\mathrm{\mathbf{Q}}_V$ along the full arrangement of linear subspaces} to be
	\begin{align*}
		\rho \colon \wt{\bQ}_V = \wt{\bQ}_{n-1} \myrightarrow{\rho_{n-1}} \wt{\bQ}_{n-2} \myrightarrow{\rho_{n-2}} \ldots \myrightarrow{\rho_2} \wt{\bQ}_1 \myrightarrow{\rho_1} \wt{\bQ}_0 \myrightarrow{\rho_0} \wt{\bQ}_{-1} = \bQ_V ,
	\end{align*}
	where $\rho_i$ denotes the blow-up of $\wt{\bQ}_{i-1}$ along the strict transforms of all linear subspaces $H \subset \bQ_V$ with $\dim \, H = i$.
\end{definition}

Again, $\rho_{n-1}$ is an isomorphism, all $\rho_i$ induce an isomorphism on $\Omega_V$, and $\Omega_V \subset \wt{\bQ}_V$ is a dense open subset.
Also, every linear subspace $\bQ_{V'} \subset \bQ_V$ gives rise to an exceptional divisor $E_{V'}\subset \wt{\bQ}_V$ under the blow-up $\rho$.
Like before, the intersection of the divisors $E_{V'}$ and $E_{V''}$ is non-empty if and only if $V' \subset V''$ or $V'' \subset V'$.

Then, in \cite[Prop.\ 9.1]{L17} and \cite[4.1.2]{W14} the following is shown:

\begin{theorem}
	The compactification $\bar{\Omega}_V$ is isomorphic to $\wt{\mathrm{\mathbf{P}}}_V$, extending the isomorphism $\bar{\Omega}_V \supset \mathrm{Flag}_V(w) \cong \Omega_V \subset \wt{\mathrm{\mathbf{P}}}_V$ induced by \eqref{Eq - complete flag projection proj space}.
\end{theorem}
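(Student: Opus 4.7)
The plan is to construct a canonical morphism $\Phi \colon \bar{\Omega}_V \to \wt{\bP}_V$ extending the isomorphism induced by \eqref{Eq - complete flag projection proj space}, and then to prove $\Phi$ is an isomorphism via smoothness and a bijection of boundary strata.

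First, I would construct a morphism $\varphi \colon \bar{\Omega}_V \to \bP_V$ as follows. By \eqref{Eq - Compactification of DL-variety}, $\bar{\Omega}_V$ is a closed subvariety of $\mathrm{Flag}_{V_{\bar{k}}}^{n+1}$; composing with the projection to the first factor and then with \eqref{Eq - complete flag projection proj space} yields $\varphi$, sending $(B_0, \ldots, B_n) \mapsto V_1^{(B_0)}$. By construction, $\varphi$ restricted to $\mathrm{Flag}_{V_{\bar{k}}}(w) \cong \Omega_{V^*}$ agrees with the canonical open immersion into $\bP_{V^*}$.

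Second, I would lift $\varphi$ to each $\wt{\bP}_i$ by induction on $i = 0, 1, \ldots, n-1$, using the universal property of blow-ups. Given $\varphi_{i-1} \colon \bar{\Omega}_V \to \wt{\bP}_{i-1}$, one must verify that for each strict transform $\wt{L}$ of an $i$-dimensional linear subspace $L \subset \bP_V$, the ideal sheaf $\varphi_{i-1}^{-1}(\CI_{\wt{L}}) \cdot \CO_{\bar{\Omega}_V}$ is invertible. To see this, I would analyze the preimage of $L$ stratum-by-stratum. A point in a boundary stratum of $\bar{\Omega}_V$ is a tuple $(B_0, \ldots, B_n)$ in which $(B_{j-1}, B_j) \in O(e)$ for $j$ in some subset $J \subsetneq \{1, \ldots, n\}$; this degeneration forces linear relations among the Frobenius iterates $F^k(V_1^{(B_0)})$, constraining the line $V_1^{(B_0)}$ to lie in a specific $k$-rational linear subspace determined by $J$. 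Since $\bar{\Omega}_V$ is smooth with normal crossing boundary \cite[Lemma 9.11]{DL76}, a local computation then identifies $\varphi_{i-1}^{-1}(\wt{L})$ with an effective Cartier divisor supported on the boundary components of $\bar{\Omega}_V$ whose combinatorial type forces the image into $L$. The universal property then produces the lift $\varphi_i \colon \bar{\Omega}_V \to \wt{\bP}_i$, and iterating yields $\Phi$.

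Third, I would prove that $\Phi$ is an isomorphism. Both $\bar{\Omega}_V$ and $\wt{\bP}_V$ are smooth projective of dimension $n$ with normal crossing boundary divisors, and $\Phi$ is a proper birational morphism restricting to the identity on $\Omega_V$. Setting up a bijection between the boundary strata of $\bar{\Omega}_V$ (indexed by the degeneration subsets $J$, equivalently by partial flags in $V^*$) and those of $\wt{\bP}_V$ (indexed by flags of linear subspaces via intersections of exceptional divisors $D_{V'}$), I would verify that $\Phi$ maps each stratum isomorphically onto the corresponding stratum. Then $\Phi$ is a bijective birational morphism between smooth projective varieties, and Zariski's main theorem applied to the normal target $\wt{\bP}_V$ yields the isomorphism.

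The main obstacle will be the combinatorial content of the second step: explicitly identifying, for each degeneration type $J$, the $k$-rational linear subspace into which $V_1^{(B_0)}$ is constrained, and showing inductively that at stage $i$ the preimage ideal of every $i$-dimensional strict transform is indeed locally principal on $\bar{\Omega}_V$. This requires a careful match between the Weyl-group combinatorics coming from the Coxeter expression $w = s_1 \cdots s_n$ and the iterative geometric combinatorics of the linear arrangement on $\bP_V$; in particular, one must verify that successive blow-ups separate the boundary components in exactly the order dictated by inclusion of subspaces.
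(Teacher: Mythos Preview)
The paper does not supply a proof of this theorem; it simply cites \cite[Prop.\ 9.1]{L17} and \cite[4.1.2]{W14}. So there is no argument in the paper to compare your proposal against.

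On its own merits, your strategy (build $\varphi\colon\bar{\Omega}_V\to\bP_V$, lift it through the tower $\wt{\bP}_i$ via the universal property of blow-ups, then invoke Zariski's main theorem) is reasonable and is in fact close in spirit to what Langer does. However, there is a genuine combinatorial confusion in your sketch. You index the boundary strata of $\bar{\Omega}_V$ by the degeneration subsets $J\subsetneq\{1,\dots,n\}$ and assert this is ``equivalently by partial flags in $V^\ast$.'' These two indexings are \emph{not} the same: there are only $2^n$ subsets $J$, but far more $k$-rational flags. The locus in $\bar{\Omega}_V$ where exactly the positions in $J$ degenerate is typically disconnected, and its irreducible components are what biject with flags of a fixed dimension type (already for $n=1$ the single boundary divisor $D_1$ has $q+1$ components, one for each $k$-point of $\bP^1$). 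Your step~2 hinges on matching a specific $k$-rational linear subspace to each $J$, which cannot work as stated; you need the finer data of which component you are on, and this is precisely where the flag of $k$-rational subspaces $V_1^{(B_0)}\cap V\subset (V_1^{(B_0)}+F(V_1^{(B_0)}))\cap V\subset\cdots$ enters.

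Once the stratification is correctly set up, the remaining work is exactly what you identify as the obstacle: checking that at stage $i$ the pullback of each strict transform of an $i$-dimensional linear subspace has invertible ideal sheaf on $\bar{\Omega}_V$. Since $\bar{\Omega}_V$ is smooth, this reduces to showing the preimage is a divisor, which in turn comes down to verifying that the boundary component of $\bar{\Omega}_V$ indexed by the subspace $V'$ (with $\dim_k V'=n-i$) maps onto the corresponding strict transform with generic fibre dimension zero. This is where the actual content lies, and your outline is honest that it is only a sketch; filling it in is essentially the proof in \cite{L17}.
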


Extending a result from Schieder \cite[Thm.\ 5.2]{S09} for $\dim_k \, V \leq 3$, we will add to the picture:
	
\begin{theorem}\label{Thm - P Q Isomorphism}
	There exists an isomorphism $\wt{\mathrm{\mathbf{P}}} \cong \wt{\mathrm{\mathbf{Q}}}$, necessarily unique, which extends the identity on $\wt{\mathrm{\mathbf{P}}} \supset \Omega = \Omega \subset \wt{\mathrm{\mathbf{Q}}}$.
	Furthermore, for each $\{0\}\neq V' \subsetneq V$, the exceptional divisors obtained from $\mathrm{\mathbf{P}}_{V/V'}$ under $\pi$ and from $\mathrm{\mathbf{Q}}_{V'}$ under $\rho$ agree.
\end{theorem}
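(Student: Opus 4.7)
Uniqueness is immediate, since $\Omega_V$ is dense in the separated scheme $\wt{\bQ}_V$: any two morphisms $\wt{\bP}_V \to \wt{\bQ}_V$ restricting to the identity on $\Omega_V$ must agree on $\Omega_V$, and hence everywhere. For existence I plan to construct morphisms $\wt{f}\colon \wt{\bP}_V \to \wt{\bQ}_V$ and $\wt{g}\colon \wt{\bQ}_V \to \wt{\bP}_V$ that both extend $\mathrm{id}_{\Omega_V}$. The compositions $\wt{g}\circ\wt{f}$ and $\wt{f}\circ\wt{g}$ will then extend the identity on $\Omega_V$ from both sides, whence by the uniqueness just observed they coincide with the identity on $\wt{\bP}_V$ and $\wt{\bQ}_V$ respectively, showing $\wt{f}$ and $\wt{g}$ to be mutually inverse isomorphisms.

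The construction of $\wt{f}$ splits into two steps. \emph{Step 1:} extend the inclusion $\Omega_V \hookrightarrow \bQ_V$ to a morphism $f\colon \wt{\bP}_V \to \bQ_V$. I would do this via the generators $\tfrac{1}{v}$ of $R_V$: viewed as rational sections on $\bP_V$, each $\tfrac{1}{v}$ has pole divisor along the hyperplane $\bP_{V/\langle v\rangle}$, so after pullback by $\pi$ and twisting by a suitable combination $\sum_{V'} a_{V'} D_{V'}$ of exceptional divisors they become honest global sections of a single line bundle on $\wt{\bP}_V$. A stratum-by-stratum local analysis should show that these sections have no common zero, so they define a morphism $\wt{\bP}_V \to \BP_k^{q^{n+1}-2}$; the defining relations of $R_V$ from \cite[Thm.~1.6]{PS14} are preserved tautologically, so this morphism factors through $\bQ_V = \Proj R_V$. \emph{Step 2:} lift $f$ to $\wt{f}\colon \wt{\bP}_V \to \wt{\bQ}_V$ by applying the universal property of blow-ups iteratively along the sequence defining $\rho$. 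At each stage, the scheme-theoretic preimage of the strict transform of $\bQ_{V'}$ must be an effective Cartier divisor; this holds because $\wt{\bP}_V$ is smooth (hence locally factorial), $f$ is dominant, and no component of $\wt{\bP}_V$ is contracted into $\bQ_{V'}$. The morphism $\wt{g}$ is obtained by the symmetric argument, using the generators $v \in V$ of $\mathrm{Sym}\,V$ as rational sections on $\bQ_V$ twisted by an appropriate sum of the $E_{V'}$.

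The principal obstacle is Step 1, where one must pin down the exponents $a_{V'}$ so that the chosen twist simultaneously absorbs every pole and then verify non-vanishing along each stratum of the normal-crossings boundary $\wt{\bP}_V \setminus \Omega_V$. Here the combinatorics of the stratification of $\wt{\bP}_V$ by flags of $V$, together with the duality between the strata of $\bP_V$ and of $\bQ_V$ described earlier in this section, enters decisively: the exponents $a_{V'}$ should be read off from the depth of $V'$ in the blow-up order, and at each boundary point the non-vanishing reduces, via the Pink--Schieder relations $\tfrac{1}{v}\tfrac{1}{v'} = \tfrac{1}{v+v'}\bigl(\tfrac{1}{v}+\tfrac{1}{v'}\bigr)$, to a linear-algebraic check along the associated flag. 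Finally, the identification of exceptional divisors follows from tracing the constructions: a general point of $D_{V'}$ maps under $f = \rho \circ \wt{f}$ to the stratum $\CQ_{V'} \subset \bQ_V$, and since $\wt{f}$ is an isomorphism and $\dim D_{V'} = \dim E_{V'}$, this forces $\wt{f}(D_{V'}) = E_{V'}$.
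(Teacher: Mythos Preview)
Your approach is genuinely different from the paper's. The paper builds explicit affine charts $U_{\CF}\subset\wt{\bP}_V$ and $W_{\CF}\subset\wt{\bQ}_V$, indexed by complete flags $\CF$ of $V$, and shows by direct computation that their coordinate rings coincide as subrings of $\mathrm{Frac}(\mathrm{Sym}\,V)$; the isomorphism is then obtained by gluing. Your global strategy via line bundles and the universal property of blow-ups is attractive, but there are two real gaps in Step~2 and its ``symmetric'' counterpart.

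First, the implication ``$\wt{\bP}_V$ smooth, $f$ dominant, image not contained in the centre $\Rightarrow$ scheme-theoretic preimage of the centre is Cartier'' is simply false. Take $X=Y=\BA^2_k$, $f=\mathrm{id}$, and $Z$ the origin: all your hypotheses hold, yet $f^{-1}\CI_Z\cdot\CO_X$ is the maximal ideal, which is not invertible. Local factoriality only guarantees that height-one ideals are invertible; it does not force the pullback ideal of a higher-codimension centre to have height one. To lift $f$ through $\rho_0,\rho_1,\ldots$ you would have to verify at each stage, by an actual local computation, that $f^{-1}$ of the relevant strict transform is scheme-theoretically a divisor. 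That computation is essentially what the paper's chart-by-chart analysis accomplishes; it is not a formality.

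Second, and more decisively, the ``symmetric argument'' for $\wt{g}\colon\wt{\bQ}_V\to\wt{\bP}_V$ does not go through at all. The variety $\bQ_V$ is singular along every stratum of codimension $\geq 2$ \cite[Thm.~8.4]{PS14}, and blowing up a singular variety need not produce a regular or even locally factorial result. Thus you cannot invoke local factoriality of $\wt{\bQ}_V$ to run the lifting argument without already knowing $\wt{\bQ}_V\cong\wt{\bP}_V$, which is what you are trying to prove. Nor can you easily bypass $\wt{g}$ by arguing that $\wt{f}$ alone is an isomorphism via Zariski's Main Theorem, since that would require normality of $\wt{\bQ}_V$, which is equally unclear a priori. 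The paper's explicit-coordinates approach avoids both difficulties precisely because it never appeals to any abstract regularity property of $\wt{\bQ}_V$.
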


The proof of this theorem will be done in section \ref{Subsect - Proof of Thm P Q Isomorphism}.
We conclude that $\wt{\bP}_V$, $\wt{\bQ}_V$, and $\bar{\Omega}_V$ are isomorphic compactifications of $\Omega_V$.
From now on, we denote this compactification which dominates both $\bP_V$ and $\bQ_V$ by $\bB_V$.

\subsection{Stratification of $\bB_V$}

For $\{0\}\neq V' \subsetneq V$, we write $D_{V'} \subset \bB_V$ for the exceptional divisor obtained from $\bP_{V/V'}$ respectively $\bQ_{V'}$.
This gives a stratification of $\bB_V$ as follows:
For an arbitrary flag $\CF $ of $V$ let
\[ \CB_{\CF} \defeq \bigg( \;\, \bigcap_{\mathclap{W \in \CF}} \; D_{W} \bigg) \mathbin{\bigg\backslash} \bigg( \;\, \bigcup_{\mathclap{W \notin \CF}} \; D_{W} \bigg) \]
where we use the convention that the trivial flag $(\{0\}\subset V)$ corresponds to 
\[ \CB_{(\{0\}\subset V)} \defeq \bB_{V} \mathbin{\big\backslash} \bigg( \;\;\;\;\, \bigcup_{\mathclap{ \{0\}\neq W \subsetneq V }} \; D_{W} \bigg) = \Omega_V .\]
Let $\CF \leq \CF'$ signify that the flag $\CF'$ is a refinement of $\CF$ (including $\CF' = \CF$).
As two divisors intersect if and only if there is an inclusion between their corresponding subspaces, the closure of the stratum $\CB_{\CF}$ is given by the disjoint union
\[ \ov{\CB_{\CF}} = \bigsqcup_{\CF \leq \CF'} \CB_{\CF'} . \]

We conclude this section by describing $\bB_V$ similarly to the way, in which the blow-up of the origin $P \in \BA^n_k$ is the subvariety of $\BA^n_k \times \BP^{n-1}_k$ defined by the vanishing of the polynomials
\[ x_i y_j - x_j y_i, \,\, i,j=1,\ldots,n\]
(where $\BA^n_k = \Spec k \! \left[ x_1,\ldots,x_n \right]$ and $\BP^{n-1}_k = \Proj k \! \left[y_1,\ldots,y_n \right]$).

We define
\[ \bP_{\ul{V},i} \defeq \;\; \prod\limits_{\mathclap{\substack{W \subset V, \\ \dim_k \, W \geq n-i}}} \;\;\; \bP_{W} \]
so that $\bP_{\ul{V},-1}= \bP_V$.
Then 
\[\bP_{\ul{V},i} = \mathrm{multiProj} \; \bigg( \;\;\bigotimes\limits_{\mathclap{\substack{W \subset V, \\ \dim_k \, W \geq n-i}}} \;\;\; k \! \left[ v\middle| v\in W \right] \bigg)\]
where 
\[\;\; \bigotimes\limits_{\mathclap{\substack{W \subset V, \\ \dim_k \, W \geq n-i}}} \;\;\; k \! \left[ v\middle| v\in W \right] \]
is a multi-graded $k$-algebra with a grading for each $W \subset V$ with $ \dim_k \, W \geq n-i$, and we\linebreak take the $\mathrm{multiProj}$-construction with respect to these gradings.
We have for the structure sheaf
\[ \CO_{\bP_{\ul{V},i}} = \;\; \bigotimes\limits_{\mathclap{\substack{W \subset V, \\ \dim_k \, W \geq n-i}}} \;\;\; \pr^{\ast}_{\bP_W} \CO_{\bP_{W}} .  \]
For $ V' \subset V$, with $\dim_k \, V' \geq n-i$, we consider $\pr^{\ast}_{\bP_W} \CO_{\bP_V'} \subset \CO_{\bP_{\ul{V},i}}$ as a subsheaf.
Any ideal sheaf $\CI \subset \CO_{\bP_{V'}}$ induces an extended ideal sheaf in $\CO_{\bP_{\ul{V},i}}$ which we will also denote by $\CI \subset \CO_{\bP_{\ul{V},i}}$ when this causes no confusion.

For $V'' \subset V' \subset V$, with $\dim_k \, V'' \geq n-i$, consider the following ideal sheaf:
\begin{align}\label{Eq - Generators of incidence ideal sheaf}
	\left( v \otimes v' - v' \otimes v \middle| v,v' \in V'' \right)^{\sim} \subset \pr_{V'}^{\ast} \CO_{\bP_{V'}} \otimes \pr_{V''}^{\ast}\CO_{\bP_{V''}} .
\end{align}
Define $\CJ_i \subset \CO_{\bP_{\ul{V},i}}$ to be the sum of the extensions of the ideal sheaves above, for all such $V'' \subset V' \subset V$.
Let $\RX_{i} \subset \bP_{\ul{V},i}$ be the subvariety defined by $\CJ_i$.
We call a subvariety obtained this way an \textit{incidence variety}, and an ideal generated by relations as in \eqref{Eq - Generators of incidence ideal sheaf} an \textit{incidence ideal}.
Furthermore, note that the projection $\bP_{\ul{V},i} \lra \bP_{\ul{V},i-1}$ induces a projection $\RX_{i} \lra \RX_{i-1}$.

\begin{proposition}\label{Prop - B Incidence variety X_i isomorphism}
	For all $i=0,\ldots,n-1$, let $\wt{\mathrm{\mathbf{P}}}_i$ denote the $i$-th step in the blow-up of $\mathrm{\mathbf{P}}_V$ along the full arrangement of linear subspaces as in Definition \ref{Def - P blow-up arrangement}.
	Then there exists an isomorphism 
	\[b_i \colon \wt{\mathrm{\mathbf{P}}}_i \myrightarrow{\sim} \RX_i  \]
	such that the following diagram commutes
	\[\begin{tikzcd}
		\wt{\mathrm{\mathbf{P}}}_i		\arrow[r, "b_i"] \arrow[d, "\pi_i"]	& \RX_i \arrow[d] \\
		\wt{\mathrm{\mathbf{P}}}_{i-1}	\arrow[r, "b_{i-1}"] 				& \RX_{i-1}  . 
	\end{tikzcd}\]
	In particular, $\mathrm{\mathbf{B}}_V \cong \RX_{n-1} \subset \mathrm{\mathbf{P}}_{\ul{V},n-1}$.
	Moreover, under $b_i$ the exceptional divisor $D_{V'}\subset \wt{\mathrm{\mathbf{P}}}_i$, for $V' \subsetneq V$ with $\dim_k \, V' \geq n-i$, is identified with
	\[\RX_i \cap  \Bigg(  \;\;\;\;  \prod\limits_{\mathclap{\substack{V' \subsetneq W \subset V}}} \; \mathrm{\mathbf{P}}_{W/V'} \times \mathrm{\mathbf{P}}_{V'} \times \;\; \prod\limits_{\mathclap{\substack{V' \nsubset W \subset V, \\ \dim_k \, W \geq n-i}}} \; \mathrm{\mathbf{P}}_{W} \Bigg) \subset \RX_i ,\]
	where for $V' \subsetneq W$ we consider $\mathrm{\mathbf{P}}_{W/V'}$ as a closed subset of $\mathrm{\mathbf{P}}_{W}$.
\end{proposition}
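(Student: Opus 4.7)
The plan is to proceed by induction on $i$, from $i=-1$ to $i=n-1$. The base case $i=-1$ is immediate: both $\wt{\bP}_{-1}$ and $\RX_{-1}$ equal $\bP_V$, since the only subspace $W \subset V$ with $\dim_k W \geq n+1$ is $V$ itself and no non-trivial incidence relations arise; take $b_{-1}=\mathrm{id}$. For the inductive step, suppose $b_{i-1}\colon \wt{\bP}_{i-1}\myrightarrow{\sim}\RX_{i-1}$ has been constructed. For each $W \subset V$ with $\dim_k W = n-i$, consider the rational map $\bP_V \to \bP_W$ given by $H \mapsto H \cap W$, i.e.\ by the inclusion of linear forms $W \hookrightarrow V$; its base locus is $\bP_{V/W}$. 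Pulled back via $\wt{\bP}_{i-1} \to \bP_V$, it becomes a rational map with indeterminacy locus the strict transform $Z_W$ of $\bP_{V/W}$, which by the inductive hypothesis is smooth. For distinct $W,W'$ both of dimension $n-i$, the intersection $\bP_{V/W} \cap \bP_{V/W'} = \bP_{V/(W+W')}$ has dimension strictly less than $i$ and was therefore blown up at an earlier stage, separating the strict transforms; hence $Z_W$ and $Z_{W'}$ are disjoint in $\wt{\bP}_{i-1}$. Consequently $\wt{\bP}_i$, the simultaneous blow-up of all $Z_W$'s, equals the fibre product over $\wt{\bP}_{i-1}$ of the individual blow-ups $\Bl_{Z_W}\wt{\bP}_{i-1}$.

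Each $\Bl_{Z_W}\wt{\bP}_{i-1}$ embeds into $\wt{\bP}_{i-1} \times \bP_W$ as the closure of the graph of the rational map, because the pulled-back ideal sheaf of $\bP_{V/W}$ becomes invertible on $\Bl_{Z_W}\wt{\bP}_{i-1}$. Assembling these and composing with $b_{i-1}$ yields a morphism $b_i \colon \wt{\bP}_i \to \bP_{\ul{V},i}$ that commutes with the projections. The main obstacle is to show $b_i$ is an isomorphism onto $\RX_i$. That $b_i$ factors through $\RX_i$ reduces to checking the incidence relations defining $\CJ_i$: those inherited from $\CJ_{i-1}$ follow by induction, while the new relations involving a factor $\bP_W$ with $\dim_k W = n-i$ hold tautologically on the dense open $\Omega_V \subset \wt{\bP}_i$ (there $H \cap W = (H \cap W') \cap W$ whenever $W \subset W'$), and therefore on all of $\wt{\bP}_i$ by density and reducedness. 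Conversely, $b_i$ is birational, is an isomorphism over $\Omega_V$, and over $\RX_{i-1}$ realises the simultaneous blow-up along the smooth disjoint centres $\bigsqcup_W Z_W$; combined with irreducibility of $\RX_i$ and comparison of dimensions, this forces $b_i$ to be an isomorphism onto $\RX_i$. Uniqueness of $b_i$ follows from density of $\Omega_V$.

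For the description of the exceptional divisor $D_{V'} \subset \wt{\bP}_i$ under $b_i$, I proceed by a secondary induction on $i$. When $\dim_k V' > n-i$, the divisor $D_{V'}$ is the preimage of its counterpart in $\wt{\bP}_{i-1}$, and I identify its image in $\RX_i$ by inspecting the newly added $\bP_W$-coordinates for $\dim_k W = n-i$: if $W \supsetneq V'$, the incidence relations $v \otimes v' - v' \otimes v = 0$ for $v, v' \in V' \subset W$ force the hyperplane of $W$ to contain $V'$, so the coordinate lies in $\bP_{W/V'}$; if $V' \not\subset W$, no such constraint arises and the coordinate ranges freely over $\bP_W$. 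When $\dim_k V' = n-i$, $D_{V'}$ is the newly created exceptional divisor, a $\bP_{V'}$-bundle over $Z_{V'}$; the $\bP_{V'}$-coordinate is free, while the same constraint analysis applied to the remaining factors yields the claimed product description.
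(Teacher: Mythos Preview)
Your overall strategy --- realizing each blow-up as the closure of the graph of a rational map to $\bP_W$, and then assembling these into a morphism $b_i\colon\wt{\bP}_i\to\bP_{\ul V,i}$ factoring through $\RX_i$ --- is a reasonable and more conceptual alternative to the paper's approach. The paper instead covers both $\wt{\bP}_i$ and $\RX_i$ by explicit affine charts indexed by truncated flags, computes the coordinate rings on each side as explicit sub-$k$-algebras of the function field of $\bP_V$, and verifies they coincide; reducedness and irreducibility of $\RX_i$ fall out of this computation.

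However, your argument has a genuine gap at the step where you conclude that $b_i$ is an \emph{isomorphism} onto $\RX_i$. The sentence ``combined with irreducibility of $\RX_i$ and comparison of dimensions, this forces $b_i$ to be an isomorphism'' does not hold: a proper birational morphism between irreducible varieties of the same dimension need not be an isomorphism (any non-trivial blow-up is a counterexample). The phrase ``over $\RX_{i-1}$ realises the simultaneous blow-up'' is circular: it is true that $\wt{\bP}_i\to\wt{\bP}_{i-1}\cong\RX_{i-1}$ is the blow-up, but what you need is that $\RX_i\to\RX_{i-1}$ is \emph{also} this blow-up, and that is precisely the content of the proposition. Moreover, you invoke irreducibility (and implicitly reducedness) of $\RX_i$, which is not obvious from its definition as the vanishing locus of the incidence relations $\CJ_i$; in the paper this is a consequence of the chart-by-chart identification, not an input to it.

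To repair your argument you would need to supply one of the following: (a) an explicit inverse $\RX_i\to\wt{\bP}_i$, for instance by showing that the ideal sheaf of $\bigsqcup_W Z_W$ becomes invertible after pullback to $\RX_i$ and invoking the universal property of the blow-up (and then checking both composites are the identity on the dense open $\Omega_V$, which requires $\RX_i$ reduced); or (b) a direct proof that $b_i$ is a closed immersion and that its scheme-theoretic image is $\RX_i$. Either route still needs an independent argument that $\RX_i$ is reduced. The paper's local computation sidesteps all of this by exhibiting the coordinate rings of $\RX_i$ on a cover as integral domains.
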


We will prove this proposition in section \ref{Subsect - Proof of Prop B Incidence variety X_i isomorphism}.

\subsection{Proof of Theorem \ref{Thm - P Q Isomorphism}}\label{Subsect - Proof of Thm P Q Isomorphism}

	Our aim is to construct a covering of $\wt{\bP}_V$ by open affine subsets $U_{\CF}$ and a covering of $\wt{\bQ}_V$ by open affine subsets $W_{\CF}$, both indexed by all complete flags $\CF$ of $V$, such that $\Omega_V \subset U_{\CF}$ and $\Omega_V \subset W_{\CF}$, for all flags $\CF$.
	We will show that for a fixed complete flag $\CF$ there is an isomorphism $U_{\CF} \cong W_{\CF}$ which extends the identity on $\Omega_V$.
	As $\wt{\bP}_V$ and $\wt{\bQ}_V$ are reduced and separated over $k$, these isomorphisms glue to a unique isomorphism $\wt{\bP}_V \cong \wt{\bQ}_V$ extending the identity on $\Omega_V$.

	Fix a complete flag $\CF = \left( \{0\}= V_n \subsetneq V_{n-1} \subsetneq \ldots \subsetneq V_0 \subsetneq V_{-1} = V \right)$ of $V$.
	We write $\CF_i$ for the flag $\CF_i = \left( \{0\} \subset V_{i+1} \subsetneq \ldots \subsetneq V \right)$ obtained from $\CF$ by truncating from below (so that $\CF = \CF_{n-1}$).
	Let $e_0,\ldots, e_n$ be a basis of $V$ such that $V_i = \Span \left( e_{i+1},\ldots, e_n \right) $ and let $L_i = \bP_{V/ V_i}$ be the linear subspace associated to $V_i$.
	This way, we obtain a chain $L_0 \subset \ldots \subset L_{n-1} \subset \bP_V$.
	By induction, we will construct open affine subsets $U_{\CF_i} \subset \wt{\bP}_i$, for $i= -1, \ldots, n-1$, such that $\pi_{i}(U_{\CF_i}) \subset U_{\CF_{i-1}}$ and $\Omega_V \subset U_{\CF_{i}}$.
	Afterwards, we will show that the subsets $U_{\CF}$ indeed cover $\wt{\bP}_V$.

	For the initial step, we consider the principal open subset $U_{\CF_{-1}} \defeq \Spec A_{\CF_{-1}} \subset \wt{\bP}_{-1}$ where we obtain $A_{\CF_{-1}}$ as the localization of $k \! \left[ v \middle| v\in V \right] $ at the element $\prod_{v\in V\setminus V_0} v$.
	
	To simplify our notation in the following, we write
	\[ \Delta_{S'}^{S} \defeq \left\{ \frac{v}{w} \middle| v \in S, w\in S' \right\} \subset
	\mathrm{Frac} \, k \! \left[ v \middle| v \in V \right]  ,\]
	for subsets $S\subset V$ and $S' \subset V\setminus \{0\}$.
	
	Then $A_{\CF_{-1}}$ is the $k$-algebra with generators $\alggen{V}{V}{V_0} $, i.e.\
	\[ A_{\CF_{-1}} = k \! \left[ \alggen{V}{V}{V_0} \right] , \]
	and $L_j \cap U_{-1}$, for $j=0,\ldots,n-1$, is given by the ideal
	\[ I_{\CF_{-1}}^{(j)} \defeq \left( \idealgen{V_j}{V}{V_0} \right) \subset A_{\CF_{-1}} . \]	

	For the induction step, suppose that we have already constructed the open subset 
	\[ U_{\CF_{i-1}} = \Spec A_{\CF_{i-1}} \subset \wt{\bP}_{i-1} \]
	where
	\[ A_{\CF_{i-1}} = k \! \left[ \alggen{V}{V}{V_0}, \alggen{V_0}{V_0}{V_1}, \ldots ,\alggen{V_{i-1}}{V_{i-1}}{V_i} \right] . \]
	Furthermore, assume that the strict transform of $L_j$, for $j\geq i $, in $U_{\CF_{i-1}}$ is given by the ideal
	\[ I_{\CF_{i-1}}^{(j)} = \left( \idealgen{V_j}{V_{i-1}}{V_i} \right) \subset A_{\CF_{i-1}} , \]
	and the exceptional divisor of $L_j$, for $j\leq i-1$, i.e.\ the preimage of $L_j$ in $U_{\CF_{i-1}}$ under the blow-up $\pi_{i-1} \circ \ldots \circ \pi_0$ (an empty assumption if $i=0$), is given by the ideal $\big( \frac{e_{j+1}}{e_j} \big) \subset A_{\CF_{i-1}}$.

\begin{lemma}\label{BlowupexplicitLemmaforP}
	All strict transforms of linear subspaces $L'= \mathrm{\mathbf{P}}_{V/V'} \subset \mathrm{\mathbf{P}}_V$ with $\dim \, L' \geq i$ and $L_i \nsubset L'$ have empty intersection with $U_{\CF_{i-1}}$.
\end{lemma}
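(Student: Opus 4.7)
The plan is to proceed by induction on $i$. For the base case $i=0$, the strict transform of $L'$ in $U_{\CF_{-1}}$ is just $L'\cap U_{\CF_{-1}}$, whose ideal in $A_{\CF_{-1}}$ is generated by the images of the elements of $V'$; choosing any $v'\in V'\setminus V_0$, the generator $v'/e_0$ is a unit in $A_{\CF_{-1}}$ (since $v'$ and $e_0$ both lie in $V\setminus V_0$, so $v'/e_0$ and $e_0/v'$ both belong to $\alggen{V}{V}{V_0}$), hence the ideal is the whole ring and $L'\cap U_{\CF_{-1}}=\emptyset$.

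For the induction step I split on whether $V'\subset V_{i-1}$. If $V'\not\subset V_{i-1}$, the induction hypothesis applied at level $i-1$ to the same $L'$ already gives that the strict transform is empty in $U_{\CF_{i-2}}$; compatibility with the blow-up $\pi_{i-1}\colon U_{\CF_{i-1}}\to U_{\CF_{i-2}}$ passes this to $U_{\CF_{i-1}}$. The substantive case is $V'\subset V_{i-1}$: using $V'\not\subset V_i$ I pick $v\in V'\cap(V_{i-1}\setminus V_i)$, and observe that $v$ and $e_i$ both lie in $V_{i-1}\setminus V_i$, so $v/e_i$ and $e_i/v$ both appear in $\alggen{V_{i-1}}{V_{i-1}}{V_i}$ and $v/e_i$ is a unit in $A_{\CF_{i-1}}$.

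The remaining task is to show that this unit $v/e_i$ lies in the ideal of the strict transform of $L'$ in $U_{\CF_{i-1}}$, which will force that ideal to be the whole of $A_{\CF_{i-1}}$. I intend to do this by tracking the generator $v/e_0$ of the ideal of $L'\cap U_{\CF_{-1}}$ through the blow-ups $\pi_0,\ldots,\pi_{i-1}$ using the factorisation
\[
\frac{v}{e_j}\;=\;\frac{e_{j+1}}{e_j}\cdot\frac{v}{e_{j+1}}\qquad(j=0,\ldots,i-1),
\]
which is meaningful in $A_{\CF_j}$ because $v\in V'\subset V_{i-1}\subset V_j$. By the structural description of $A_{\CF_j}$ recalled just before the lemma, $e_{j+1}/e_j$ generates the exceptional divisor ideal of the blow-up $\pi_j$; so saturating the strict-transform ideal at this principal divisor moves $v/e_j$ to $v/e_{j+1}$. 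After $i$ saturations, $v/e_i$ lies in the ideal of the strict transform of $L'$ in $U_{\CF_{i-1}}$, and being a unit it forces that ideal to be everything.

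The main technical point to verify is that on our chart the centre of $\pi_j$ really is only the strict transform of $L_j$, so that $(e_{j+1}/e_j)$ is a genuine local equation of the full exceptional divisor and the saturation argument produces a clean factor. This is precisely the lemma itself at level $j<i$ applied to the other linear subspaces of dimension $j$, for which $L_j\not\subset L''$ is automatic since two distinct dimension-$j$ subspaces cannot contain one another; the nested induction closes up cleanly, using also that $\dim V'<\dim V_j$ for all $j\le i-1$ guarantees $L'$ is never contained in a blow-up centre, so the proper transform of $L'$ is not absorbed into the exceptional locus.
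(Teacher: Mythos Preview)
Your proof is correct and follows the same inductive structure as the paper's: induction on $i$, with the case split on whether $V'\subset V_{i-1}$, and in the substantive case the choice of $v\in V'\setminus V_i$ to produce a unit in the strict transform ideal. The paper's execution is slicker in one place: rather than tracking $v/e_0$ through successive saturations to reach $v/e_i$, it asserts (by the same computation that gave the formula $I_{\CF_{i-1}}^{(j)}=(\Delta^{V_j}_{V_{i-1}\setminus V_i})$ for the $L_j$) that the strict transform ideal of $L'$ is $(\Delta^{V'}_{V_{i-1}\setminus V_i})$, and then simply observes that $1=\tfrac{v}{v}$ is one of its generators since $v\in V'$ and $v\in V_{i-1}\setminus V_i$---no iteration needed.
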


	We postpone the proof of the this and all coming lemmata of the subsection, as well as the proofs of the numbered equations until the end of this subsection.
	With the above lemma, we see that the centre of $\pi_{i} \colon \pi_{i}^{-1} ( U_{\CF_{i-1}} ) \lra U_{\CF_{i-1}}$ consists only of the strict transform of $L_i$ in $U_{\CF_{i-1}}$.
	Hence we find that
	\[ \pi_{i}^{-1} ( U_{\CF_{i-1}} ) = \Proj \Bl_{I_{\CF_{i-1}}^{(i)}} A_{\CF_{i-1}} ,  \]
	where
	\[\Bl_{I_{\CF_{i-1}}^{(i)}} A_{\CF_{i-1}} = A_{\CF_{i-1}} \oplus I_{\CF_{i-1}}^{(i)} \oplus \left( I_{\CF_{i-1}}^{(i)} \right)^2 \oplus \ldots \,  . \]
	We consider the principal open subset $U_{\CF_i} \defeq \Spec A_{\CF_i} \subset \pi_{i}^{-1} ( U_{\CF_{i-1}} )$, where $A_{\CF_i}$ is the coordinate ring, consisting of the elements of degree $0$ of the localization of $\Bl_{I_{\CF_{i-1}}^{(i)}} A_{\CF_{i-1}}$ at the element
	\[ g_{V_{i-1}\setminus V_{i}}^{V_{i} \setminus V_{i+1}} \defeq \;\;\;\; \prod_{\mathclap{\substack{ v\in V_{i} \setminus V_{i+1} \\ w\in V_{i-1} \setminus V_{i} }} } \;\;\; \frac{v}{w} . \]
	We compute that
	\begin{align}\label{BlowupcoordinateringforPEquation}
		A_{\CF_i} = \Big( \Bl_{I_{\CF_{i-1}}^{(i)}} A_{\CF_{i-1}} \Big)_{\left[ g_{V_{i-1}\setminus V_{i}}^{V_{i} \setminus V_{i+1}} \right]_0 } 
			= A_{\CF_{i-1}} \! \left[ \alggen{V_i}{V_i}{V_{i+1}} \right] .
	\end{align}
	Furthermore for $j \geq i$, we find that the total transform of $L_j$ in $U_{\CF_i}$ is
	\begin{align}\label{BlowuptotaltransformforPEquation}
		I_{\CF_{i-1}}^{(j)}  A_{\CF_i} = \bigg( \frac{e_{i+1}}{e_i} \bigg) \bigg( \idealgen{V_j}{V_i}{V_{i+1}} \bigg) \subset A_{\CF_i} .
	\end{align}
	Hence, the exceptional divisor of $\pi_i$ in $U_{\CF_i}$, i.e.\ the case $j=i$, is given by $\big( \frac{e_{i+1}}{e_i} \big) \subset A_{\CF_i} $. Therefore the strict transform of $L_j$ in $U_{\CF_i}$, for $j \geq i+1$, is given by
	\[ I_{\CF_i}^{(j)} \defeq \left( \idealgen{V_j}{V_i}{V_{i+1}} \right) . \]
	Note that the ideal $\big( \frac{e_{i+1}}{e_i} \big) \subset A_{\CF_i} $ does not depend on the choice of the basis $e_0,\ldots,e_n$ of $V$ with $V_i = \Span \left( e_{i+1} ,\ldots, e_n \right)$.
	This finishes the construction of $U_{\CF_i} \subset \wt{\bP}_i$.
	
	It remains to show that the sets $U_{\CF} = U_{\CF_{n-1}}$ cover $\wt{\bP}_V$ if $\CF
	$ runs through all complete flags of $V$.
	It suffices to do this stepwise, via the following lemma:
	
\begin{lemma}\label{BlowupcoverLemmaforP}
	For fixed $i \in \{0, \ldots, n-1\}$, let $U_{\CF_{i-1}} = \Spec A_{\CF_{i-1}} \subset \wt{\mathrm{\mathbf{P}}}_{i-1}$ be the open subset corresponding to the flag $ \CF_{i-1} = \left( \{0\}\subset V_{i} \subsetneq \ldots \subsetneq V \right) $ with $\dim_k \, V_i = n-i$.
	Then, as $ \CF_{i}' = \left( \{0\} \subset V_{i+1}' \subsetneq V_{i} \subsetneq\ldots\subsetneq V \right) $ ranges over all such flags with $\dim_k \, V_{i+1}' = n-i-1$, the corresponding open subsets $U_{\CF_i'} = \Spec A_{\CF_i'}$ cover $\pi_{i}^{-1} \left( U_{\CF_{i-1}} \right) \subset \wt{\mathrm{\mathbf{P}}}_{i}$.
	Here, we write
	\begin{align*}
		A_{\CF_i'}  \defeq A_{\CF_{i-1}} \! \left[ \alggen{V_i}{V_i}{V_{i+1}'} \right] = k \! \left[ \alggen{V}{V}{V_0} , \ldots, \alggen{V_{i-1}}{V_{i-1}}{V_{i}} ,  \alggen{V_{i}}{V_{i}}{V_{i+1}'} \right] .
	\end{align*}
	Furthermore, the open subsets $U_{\CF_{-1}'} = \Spec A_{\CF_{-1}'}$, defined analogously, each contain $\Omega_V$, and a covering of $\mathrm{\mathbf{P}}_V$.
\end{lemma}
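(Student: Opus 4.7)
The plan is to handle both covering claims by a common mechanism: to each point $\Fp$ of the scheme to be covered, one attaches a $k$-subspace $Z_\Fp$ of some ambient $k$-vector space $U$ (either $V$ or $V_i$) consisting of the vectors whose corresponding element in the coordinate ring lies in $\Fp$. The fact that $\Fp$ is a point of a $\Proj$ — so does not contain the irrelevant ideal — will force $Z_\Fp \subsetneq U$, after which any $k$-hyperplane of $U$ containing $Z_\Fp$ furnishes the required flag.

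For the initial case, the inclusion $\Omega_V \subset U_{\CF_{-1}'}$ is immediate from the inclusion of localizations: $\Omega_V = D_+\bigl(\prod_{v \in V\setminus\{0\}} v\bigr)$ sits inside $D_+\bigl(\prod_{v \in V \setminus V_0'} v\bigr) = U_{\CF_{-1}'}$ for every hyperplane $V_0' \subset V$. For the covering, given a homogeneous prime $\Fp$ of $k[v \mid v \in V]$ representing a point of $\bP_V$, set $Z_\Fp \defeq V \cap \Fp \subset V$. This is a $k$-subspace and is proper, since otherwise $\Fp$ would contain the irrelevant ideal. Any $k$-hyperplane $V_0' \supset Z_\Fp$ then satisfies $\Fp \in U_{\CF_{-1}'}$.

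For the inductive step, I would first invoke the identification $\pi_i^{-1}(U_{\CF_{i-1}}) = \Proj \Bl_{I_{\CF_{i-1}}^{(i)}} A_{\CF_{i-1}}$ with $I_{\CF_{i-1}}^{(i)} = \bigl(\idealgen{V_i}{V_{i-1}}{V_i}\bigr)$ from the construction preceding the lemma. Then I would fix an arbitrary $w_0 \in V_{i-1}\setminus V_i$ and verify that every ratio $w/w_0$ with $w \in V_{i-1}\setminus V_i$ is already a unit in $A_{\CF_{i-1}}$; this follows from the localizations performed at each earlier stage, via $w/w_0 = (w/u)\cdot(u/w_0)$ for any $u \in V_{i-2}\setminus V_{i-1}$, both factors having been inverted at the previous step. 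Consequently $\{v/w_0 : v \in V_i\}$ generates $I_{\CF_{i-1}}^{(i)}$ as an $A_{\CF_{i-1}}$-module, and the principal open localization $D_+\bigl(g_{V_{i-1}\setminus V_i}^{V_i\setminus V_{i+1}'}\bigr)$ in the Rees algebra equals $\bigcap_{v \in V_i \setminus V_{i+1}'} D_+(v/w_0)$. For a homogeneous prime $\Fp$ of the Rees algebra representing a point of the blow-up, the set $Z_\Fp \defeq \{v \in V_i : v/w_0 \in \Fp\}$ is a $k$-subspace of $V_i$; if it were all of $V_i$, then $\Fp$ would contain every $A_{\CF_{i-1}}$-module generator of $I_{\CF_{i-1}}^{(i)}$ and hence the entire irrelevant ideal of the Rees algebra, a contradiction. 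Picking any codimension-one subspace $V_{i+1}' \subset V_i$ containing $Z_\Fp$ yields $\Fp \in U_{\CF_i'}$.

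The principal obstacle is the unit property of $w/w_0$ in $A_{\CF_{i-1}}$, which is what allows the reduction from the two-parameter set of generators $v/w$ to the one-parameter set $v/w_0$ and makes the linear-algebra argument work cleanly. Once this has been traced through the inductive construction, the rest is the elementary observation that every proper subspace of a finite-dimensional $k$-vector space is contained in a hyperplane.
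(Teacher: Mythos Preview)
Your approach is essentially the paper's: both attach to a homogeneous prime $\Fp$ in the Rees algebra the $k$-subspace $\{v \in V_i : v/w \in \Fp\}$ of $V_i$, show it is proper (otherwise $\Fp$ contains the irrelevant ideal), and then any containing hyperplane $V_{i+1}'$ furnishes a chart. The paper keeps all denominators $w \in V_{i-1}\setminus V_i$ in play and uses $v/w' = (v/w)\cdot(w/w')$ to show the subspace is independent of $w$, whereas you fix a single $w_0$ in advance; this is only a cosmetic difference.

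One correction: your route to the unit property of $w/w_0$ through a factor $u \in V_{i-2}\setminus V_{i-1}$ is problematic --- it is undefined for $i=0$, and for $i\geq 1$ the element $u/w_0$ does not even lie in $A_{\CF_{i-1}}$ (it has a pole along the exceptional divisor $\big(\frac{e_i}{e_{i-1}}\big)$). The fix is immediate and is exactly what the paper uses implicitly: since $w, w_0 \in V_{i-1}\setminus V_i$, both $w/w_0$ and $w_0/w$ already belong to $\Delta^{V_{i-1}}_{V_{i-1}\setminus V_i} \subset A_{\CF_{i-1}}$, so $w/w_0$ is a unit directly.
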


	To construct the covering of $\wt{\bQ}_V$, we proceed in an analogous way.
	Fix again a complete flag $\CF = \left( \{0\}= V_n \subsetneq V_{n-1} \subsetneq \ldots \subsetneq V_0 \subsetneq V_{-1} = V \right)$ of $V$ and let $\CF^i = \left( \{0\} \subsetneq\ldots\subsetneq V_{n-i-2} \subset V \right)$, now truncated from above.
	Like before, choose a subordinate basis $e_0,\ldots, e_n$ of $V$ such that $V_i = \Span \left( e_{i+1},\ldots, e_n \right) $.
	Now consider the linear subspaces $H_i = \bQ_{V_{n-i-1}}$.
	Note that $\dim \, H_i = i$ and that we have $H_0 \subset \ldots \subset H_{n-1} \subset \bQ_V$.
	Again, we will construct affine open subsets $\Omega_V \subset W_{\CF^i} \subset \wt{\bQ}_i$, for $i=-1,\ldots, n-1$, inductively and prove that the sets $W_\CF = W_{\CF^{n-1}}$ cover $\wt{\bQ}_V$.
	
	We begin by localizing $k \! \left[ \frac{1}{v} \middle| v\in V \setminus\{0\} \right]$ at the element $\prod_{v \in V_{n-1}\setminus \{0\}} \frac{1}{v}$ to obtain 
	\[B_{\CF^{-1}} \defeq k \! \left[ \alggen{V_{n-1}}{V}{\{0\}} \right] \]
	which defines the principal open subset $W_{\CF^{-1}} \defeq \Spec B_{\CF^{-1}} \subset \wt{\bQ}_{-1}$.
	Then $H_j \cap W_{\CF^{-1}}$ is given by the ideal
	\[ J^{(j)}_{\CF^{-1}} \defeq \left( \idealgen{V_{n-1}}{V}{V_{n-j-1}} \right) \subset B_{\CF^{-1}} , \]
	for $j=0, \ldots, n-1$.
	
	To proceed with the induction, suppose we have constructed $W_{\CF^{i-1}} = \Spec B_{\CF^{i-1}} \subset \wt{\bQ}_{i-1}$ with
	\[ B_{\CF^{i-1}} = k \! \left[ \alggen{V_{n-1}}{V_{n-1}}{\{0\}}, \ldots, \alggen{V_{n-i}}{V_{n-i}}{V_{n-i+1}}, \alggen{V_{n-i-1}}{V}{V_{n-i}} \right] , \]
	and that the strict transform of $H_j$ in $W_{\CF^{i-1}}$, for $j\geq i $, is given by the ideal 
	\[J_{\CF^{i-1}}^{(j)} = \left( \idealgen{V_{n-i-1}}{V}{V_{n-j-1}} \right) \subset B_{\CF^{i-1}}. \]
	Also assume that the exceptional divisor of $H_j$, for $j\leq i-1$, i.e.\ the preimage of $H_j$ in $W_{\CF^{i-1}}$ under the blow-up $\rho_{i-1} \circ\ldots\circ \rho_0$ (again, an empty assumption if $i=0$), is given by $ \big( \frac{e_{n-j}}{e_{n-j-1}} \big) \subset B_{\CF^{i-1}} $.

\begin{lemma}\label{BlowupexplicitLemmaforQ}
	All strict transforms of linear subspaces $H' = \mathrm{\mathbf{Q}}_{V'} \subset \mathrm{\mathbf{Q}}_V$ with $\dim \, H' \geq i$ and $H_i \nsubset H'$ have empty intersection with $W_{\CF^{i-1}}$.
\end{lemma}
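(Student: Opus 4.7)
The plan is to prove the lemma by induction on $i$, paralleling the argument for Lemma \ref{BlowupexplicitLemmaforP} in the P setting. For the base case $i = 0$, I work directly in $B_{\CF^{-1}} = k[\alggen{V_{n-1}}{V}{\{0\}}]$: taking degree zero of the localised defining ideal $(1/v : v \in V\setminus V')$ shows that the ideal cutting out $\bQ_{V'} \cap W_{\CF^{-1}}$ is generated by the ratios $v/u$ with $v \in V_{n-1}\setminus\{0\}$ and $u \in V\setminus V'$. The hypothesis $V_{n-1} \nsubset V'$ furnishes some nonzero $u_0 \in V_{n-1}\setminus V'$, and for any $v_0 \in V_{n-1}\setminus\{0\}$ the element $v_0/u_0$ lies in this ideal; its inverse $u_0/v_0$ is again a generator of $\alggen{V_{n-1}}{V}{\{0\}}$, so $v_0/u_0$ is a unit and the intersection is empty.

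For the inductive step I distinguish two cases. If $V_{n-i} \nsubset V'$, then $V'$ satisfies the hypotheses at level $i-1$ (since $\dim_k V' \geq i+1 > i$ and $V_{n-(i-1)-1} = V_{n-i}$), so by inductive hypothesis the strict transform of $\bQ_{V'}$ is already empty on $W_{\CF^{i-2}}$; since $\rho_{i-1}$ maps $W_{\CF^{i-1}}$ into $W_{\CF^{i-2}}$ and the strict transform at level $i-1$ is contained in the $\rho_{i-1}$-preimage of the strict transform at level $i-2$, the claim follows in this case.

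The remaining case is $V_{n-i} \subset V'$ with $V_{n-i-1} \nsubset V'$, which forces $V' \cap V_{n-i-1} = V_{n-i}$. Every blow-up centre $H_j$ for $0 \leq j \leq i-1$ now satisfies $V_{n-j-1} \subset V_{n-i} \subset V'$, so $H_j \subset \bQ_{V'}$ and each $\rho_j$ contributes an exceptional divisor. The initial ideal of $\bQ_{V'}$ in $B_{\CF^{-1}}$ is generated modulo scalars by the elements $e_n/u$ for $u \in V\setminus V'$, since $V_{n-1}$ is one-dimensional. In $B_{\CF^{i-1}}$ one has the factorisation $e_n/u = (e_n/e_{n-i}) \cdot (e_{n-i}/u)$ with both factors lying in $\alggen{V_{n-i-1}}{V}{V_{n-i}}$: indeed $e_n, e_{n-i} \in V_{n-i-1}$ and $u, e_{n-i} \in V\setminus V_{n-i}$ (the latter because $V_{n-i} \subset V'$ forces $u \notin V_{n-i}$). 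Consequently the total transform of $\bQ_{V'}$ in $B_{\CF^{i-1}}$ factors as $(e_n/e_{n-i}) \cdot J$, where $J = (e_{n-i}/u : u \in V\setminus V')$ and $(e_n/e_{n-i}) = \prod_{j=0}^{i-1}(e_{n-j}/e_{n-j-1})$ is the telescoping product of the exceptional divisors of $H_0, \ldots, H_{i-1}$. Since the strict transform is the closure of the part of the total transform away from the exceptional divisor, it is contained in the vanishing locus of $J$, so it suffices to exhibit a unit in $J$. The hypotheses $V_{n-i-1} \nsubset V'$ and $V_{n-i} \subset V'$ together provide some nonzero $\tilde{u} \in V_{n-i-1}\setminus V' \subset V_{n-i-1}\setminus V_{n-i}$; the generator $e_{n-i}/\tilde{u}$ of $J$ is then a unit of $B_{\CF^{i-1}}$, since $\tilde u/e_{n-i}$ is again in $\alggen{V_{n-i-1}}{V}{V_{n-i}}$.

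The main obstacle is the factorisation of the total transform in this last case: one must verify, in the spirit of equations \eqref{BlowupcoordinateringforPEquation} and \eqref{BlowuptotaltransformforPEquation} for the P setting, that each blow-up $\rho_j$ contributes its exceptional divisor with multiplicity exactly one to the total transform of $\bQ_{V'}$, so that the telescoping product $(e_n/e_{n-i})$ factors out cleanly and the remaining ideal $J$ contains the desired unit.
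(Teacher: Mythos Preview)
Your proof is correct and follows the same inductive case-split as the paper (distinguishing whether $H_{i-1}\subset H'$, i.e.\ whether $V_{n-i}\subset V'$). The paper's version is more economical in the case $V_{n-i}\subset V'$: rather than tracking the total transform from $B_{\CF^{-1}}$ and factoring out the telescoping product of exceptional divisors, it simply asserts (by the same reasoning that produced the ideals $J_{\CF^{i-1}}^{(j)}$ in the inductive setup) that the strict transform of $H'$ in $W_{\CF^{i-1}}$ is cut out by $\big(\alggen{V_{n-i-1}}{V}{V'}\big)$, and then observes that for any $v\in V_{n-i-1}\setminus V'$ one has $1=v/v$ in this ideal. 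Your ideal $J=(e_{n-i}/u:u\in V\setminus V')$ is in fact equal to this one, since $v/u=(v/e_{n-i})(e_{n-i}/u)$ with $v/e_{n-i}\in B_{\CF^{i-1}}$; your unit $e_{n-i}/\tilde u$ plays the same role as the paper's $v/v$.

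One remark: your closing paragraph about the ``main obstacle'' is overly cautious. You do \emph{not} need to know that each $\rho_j$ contributes its exceptional divisor with multiplicity exactly one. You have already shown that the total transform ideal equals $(e_n/e_{n-i})\cdot J$, and hence that on the complement of the exceptional locus (where $e_n/e_{n-i}$ is invertible) the total transform coincides with $V(J)$. Since the strict transform is the closure of the total transform restricted to this complement, it is contained in $V(J)$; once $J$ contains a unit you are done. The multiplicity question would only matter if you wanted the exact equality of the strict transform with $V(J)$, which you do not need.
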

	
	Hence the centre of $\rho_i \colon \rho_i^{-1} ( W_{\CF^{i-1}} ) \lra W_{\CF^{i-1}} $ consists only of the strict transform of $H_i$ in $W_{\CF^{i-1}}$ so that
	\[ \rho_i^{-1} ( W_{\CF^{i-1}} ) = \Proj \Bl_{J_{\CF^{i-1}}^{(i)}} B_{\CF^{i-1}} . \]
	Like before, we consider the principal open subset $W_{\CF^i} \defeq \Spec B_{\CF^i} \subset \rho_{i}^{-1} ( W_{\CF^{i-1}} )$ with $B_{\CF^i}$ being the coordinate ring of elements of degree $0$ of the localization of $\Bl_{J_{\CF^{i-1}}^{(i)}} B_{\CF^{i-1}}$ at the element
	\[ h_{V_{n-i-2}\setminus V_{n-i-1}}^{V_{n-i-1}\setminus V_{n-i}} \defeq \;\;\;\;\;\;\;\;\; \prod_{\mathclap{\substack{ v\in V_{n-i-1}\setminus V_{n-i} \\ w\in V_{n-i-2}\setminus V_{n-i-1} }} } \;\;\;\;\;\;\; \frac{v}{w} . \]
	We find that
	\begin{align}\label{BlowupcoordinateringforQEquation}
		B_{\CF^i} = \Big( \Bl_{J_{\CF^{i-1}}^{(i)}} B_{\CF^{i-1}} \Big)_{\left[ h_{V_{n-i-2}\setminus V_{n-i-1}}^{V_{n-i-1}\setminus V_{n-i}} \right]_0 } 
			= k \! \left[ \alggen{V_{n-1}}{V_{n-1}}{\{0\}}, \ldots , \alggen{V_{n-i-1}}{V_{n-i-1}}{V_{n-i}},  \alggen{V_{n-i-2}}{V}{V_{n-i-1}} \right] . 
	\end{align}
	Like before, we also compute the total transform of $H_j$ in $W_{\CF^i}$, for $j\geq i$:
	\begin{align}\label{BlowuptotaltransformforQEquation}
		J_{\CF^{i-1}}^{(j)} B_{\CF^i} = \bigg( \frac{e_{n-i}}{e_{n-i-1}} \bigg) \bigg( \idealgen{V_{n-i-2}}{V}{V_{n-j-1}} \bigg) \subset B_{\CF^i} .
	\end{align}
	In particular for $j=i$, the exceptional divisor of $\rho_i$ in $W_{\CF^i}$ is given by $\big( \frac{e_{n-i}}{e_{n-i-1}} \big) \subset B_{\CF^i}$. Hence the strict transform of $H_j$ in $W_{\CF^i}$, for $j\geq i+1$, is given by 
	\[ J_{\CF^i}^{(j)} \defeq \left( \idealgen{V_{n-i-2}}{V}{V_{n-j-1}} \right) .\]
	Like before, the ideal $\big( \frac{e_{n-i}}{e_{n-i-1}} \big) \subset B_{\CF^i}$ does not depend on the choice of the basis.
	This concludes the construction of $W_{\CF^i} \subset \wt{\bQ}_i$.
	
	The following lemma shows that the sets $W_\CF = W_{\CF^{n-1}}$ cover $\wt{\bQ}_V$ if $\CF$ runs through all complete flags $\CF$ of $V$.
	
\begin{lemma}\label{BlowupcoverLemmaforQ}
	For fixed $i \in \{0, \ldots, n-1\}$, let $W_{\CF^{i-1}} = \Spec B_{\CF^{i-1}} \subset \wt{\mathrm{\mathbf{Q}}}_{i-1}$ be the open subset corresponding to the flag $\CF^{i-1} = \left( \{0\} \subsetneq \ldots \subsetneq V_{n-i-1} \subset V \right) $ with $\dim_k \, V_{n-i-1} = i+1$.
	Then, as $ \CF^{i\prime } = \left( \{0\} \subsetneq \ldots \subsetneq V_{n-i-1} \subsetneq V_{n-i-2}' \subset V \right) $ ranges over all flags with $\dim_k \, V_{n-i-2}' = i+2$, the corresponding open subsets $W_{\CF^{i\prime }} = \Spec B_{\CF^{i\prime }}$ cover $\rho_{i}^{-1} \left( W_{\CF^{i-1}} \right) \subset \wt{\mathrm{\mathbf{Q}}}_{i}$.
	Here, we write
	\begin{align*}
		B_{\CF^{i\prime }} \defeq B_{\CF^{i-1}}  \! \left[ \alggen{V_{n-i-2}'}{V}{V_{n-i-1}} \right] = k \! \left[ \alggen{V_{n-1}}{V_{n-1}}{\{0\}} , \ldots, \alggen{V_{n-i-1}}{V_{n-i-1}}{V_{n-i}} ,  \alggen{V_{n-i-2}'}{V}{V_{n-i-1}} \right] .
	\end{align*}
	Furthermore, the open subsets $W_{\CF^{-1 \prime}} = \Spec B_{\CF^{-1 \prime}}$, defined analogously, each contain $\Omega_V$, and form a covering of $\mathrm{\mathbf{Q}}_V$.
\end{lemma}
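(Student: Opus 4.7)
The plan is to prove the covering property by working directly with homogeneous primes of the Rees algebra $R \defeq \Bl_{J_{\CF^{i-1}}^{(i)}} B_{\CF^{i-1}}$. By the preceding paragraph, $\rho_i^{-1}(W_{\CF^{i-1}}) = \Proj R$, and each $W_{\CF^{i\prime}}$ is the principal open $D_+(h_{V_{n-i-2}'\setminus V_{n-i-1}}^{V_{n-i-1}\setminus V_{n-i}})$ for a product of degree-$1$ generators of $R$ indexed by pairs $(v,w)$ with $v \in V_{n-i-1}\setminus V_{n-i}$ and $w \in V_{n-i-2}'\setminus V_{n-i-1}$. I argue by contradiction: suppose a graded prime $\mathfrak{p}$ of $R$ with $\mathfrak{p} \not\supset R_+$ satisfies $h_{V_{n-i-2}'} \in \mathfrak{p}$ for every allowed refinement. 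Primality forces, for each $V_{n-i-2}'$, some factor $\frac{v}{w}\cdot t$ to lie in $\mathfrak{p}_1$. Fixing $v_0 \in V_{n-i-1}\setminus V_{n-i}$ and noting that $\frac{v}{v_0}$ and $\frac{v_0}{v}$ both appear in $\alggen{V_{n-i-1}}{V}{V_{n-i}}$ (so $\frac{v}{v_0} \in B_{\CF^{i-1}}^{\times}$ for $v \in V_{n-i-1}\setminus V_{n-i}$), this reduces to: for every $1$-dimensional $L \subset V/V_{n-i-1}$ there exists $w$ in the preimage of $L\setminus\{0\}$ with $\frac{v_0}{w}\cdot t \in \mathfrak{p}$.

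The central step is to show the set $Z \defeq \{w \in V\setminus V_{n-i-1} : \tfrac{v_0}{w}\cdot t \in \mathfrak{p}\}$ is closed under $w \mapsto w' = u + \lambda w$ for $u \in V_{n-i-1}$, $\lambda \in k^{\times}$. This follows from the Pink--Schieder identity $\frac{u}{u+\lambda w} + \frac{\lambda w}{u+\lambda w} = 1$, which after rearrangement yields
\[ \frac{v_0}{w'} \;=\; \lambda^{-1}\,\frac{v_0}{w}\,\Bigl(1 - \tfrac{u}{w'}\Bigr) \qquad\text{in } B_{\CF^{i-1}}, \]
where $\tfrac{u}{w'}$ is itself a generator of $J_{\CF^{i-1}}^{(i)}$ and hence lies in $B_{\CF^{i-1}}$. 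Multiplying by $t$ gives $\frac{v_0}{w'}\cdot t = \lambda^{-1}(1 - \tfrac{u}{w'})\cdot(\frac{v_0}{w}\cdot t)$ in $R_1$, which lies in $\mathfrak{p}$ whenever $\frac{v_0}{w}\cdot t$ does. Combined with the line-hitting assumption, this forces $Z$ to contain every coset $(V_{n-i-1} + kw)\setminus V_{n-i-1}$, so $Z = V\setminus V_{n-i-1}$. Since $\frac{v}{w}\cdot t = \frac{v}{v_0}\cdot(\frac{v_0}{w}\cdot t)$ with $\frac{v}{v_0} \in B_{\CF^{i-1}}$ for all $v \in V_{n-i-1}$, every generator of $R_1 = J_{\CF^{i-1}}^{(i)}\cdot t$ lies in $\mathfrak{p}$; thus $\mathfrak{p} \supset R_+$, a contradiction.

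For the initial case $i = -1$: the opens $W_{\CF^{-1\prime}}$ are exactly the standard affine charts $D_+(\frac{1}{w}) \subset \Proj R_V = \bQ_V$ for $w \in V\setminus\{0\}$, parametrized by $1$-dimensional $V_{n-1}' = kw$. These cover $\bQ_V$ because $\{\frac{1}{w}\}_{w\in V\setminus\{0\}}$ generates the irrelevant ideal of $R_V$, and each contains $\Omega_V$ because every $\frac{1}{v}$ is a unit on $\Omega_V$ by definition. The main obstacle is the $V_{n-i-1}$-stability of $Z$ in the inductive step, for which the Pink--Schieder relation is the essential ingredient; once this stability is established, the covering reduces to a clean combinatorial statement about lines in $V/V_{n-i-1}$ and the standard irrelevant-ideal criterion for points of a Proj.
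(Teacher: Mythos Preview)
Your proof is correct and follows essentially the same approach as the paper. Both arguments reduce to the irrelevant-ideal criterion for $\Proj$ of the Rees algebra, use primality of $\Fp$ (resp.\ $\Fq$) to extract a single factor $\frac{v}{w}$ lying in the prime from each product $h_{V_{n-i-2}'\setminus V_{n-i-1}}^{V_{n-i-1}\setminus V_{n-i}}$, and then invoke the identity
\[
\frac{v}{\lambda\tilde{w}} \;=\; \Bigl(1 + \tfrac{w'}{\lambda\tilde{w}}\Bigr)\,\frac{v}{w}
\qquad\text{(equivalently, your }\tfrac{v_0}{w'} = \lambda^{-1}(1-\tfrac{u}{w'})\,\tfrac{v_0}{w}\text{)}
\]
to propagate membership in the prime along a $V_{n-i-1}$-coset. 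The only organizational difference is that the paper packages this via the complementary set $V_\Fq \defeq \{0\}\cup\{w : \exists\, v\in V_{n-i-1},\ \tfrac{v}{w}\in B_{\CF^{i-1}}\setminus\Fq\}$ and shows $V_\Fq = V_{n-i-1}$ by assuming a bad $\tilde{w}\in V_\Fq\setminus V_{n-i-1}$ exists, whereas you work directly with the ``bad'' set $Z$ and show $Z = V\setminus V_{n-i-1}$; your reduction to a fixed numerator $v_0$ via the units $\tfrac{v}{v_0}\in B_{\CF^{i-1}}^{\times}$ (for $v\in V_{n-i-1}\setminus V_{n-i}$) is a mild streamlining not present in the paper, but the mathematical content is the same.
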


	We see that there is an isomorphism between $U_\CF $ and $W_\CF$ that extends the identity on $\Omega_V$, since
	\[A_{\CF_{n-1}} = k \! \left[ \alggen{V}{V}{V_0} , \alggen{V_0}{V_0}{V_1}, \ldots, \alggen{V_{n-2}}{V_{n-2}}{V_{n-1}} , \alggen{V_{n-1}}{V_{n-1}}{\{0\}} \right] = B_{\CF^{n-1}} , \]
	for all complete flags $\CF = \left( \{0\} \subsetneq V_{n-1} \subsetneq \ldots \subsetneq V_0 \subsetneq  V \right)$ of $V$.
	
	Moreover, the claim about the exceptional divisors of $\wt{\bP}_V \cong \wt{\bQ}_V$ is local, hence we consider a fixed open subset $U_\CF \cong W_\CF$, for a complete flag $\CF$ of $V$.
	The only exceptional divisors $D_{V'}$ on $\wt{\bP}_{V}$ which have non-empty intersection with $U_{\CF}$ are the $D_{V'}$ with $V' \in \CF$.
	Analogously, the only exceptional divisors $E_{V'}$ on $\wt{\bQ}_{V}$ which have non-empty intersection with $W_{\CF}$ are the $E_{V'}$ with $V' \in \CF$.
	For $i\in \{0,\ldots,n-1\}$, the divisors $D_{V_i}$ and $E_{V_i}$ are both given by the ideal $\big( \frac{e_{i+1}}{e_i}\big)$ on $U_{\CF} \cong W_{\CF}$, hence they agree.
	This finishes the proof of Theorem \ref{Thm - P Q Isomorphism}.

\begin{proof}[Proof of Lemma \ref{BlowupexplicitLemmaforP}]
	Let $L' \subset \bP_V$ be a linear subspace corresponding to $V' \subset V$ and suppose that $\dim \, L' \geq i$ and $L_i \nsubset L'$, i.e.\ $V' \nsubset V_i$.
	If $L_{i-1} \subset L'$, i.e.\ $V' \subset V_{i-1}$, then, like before, the strict transform of $L'$ in $U_{\CF_{i-1}}$ is given by the ideal
	\[ I_{\CF_{i-1}}' \defeq \left( \idealgen{V'}{V_{i-1}}{V_i}  \right) \subset A_{\CF_{i-1}} . \]
	As there exists some $v\in V'\setminus V_i$, we have	
	\[ 1 = \frac{v}{v} \in I_{\CF_{i-1}}' , \]
	hence the strict transform of $L'$ in $\wt{\bP}_{i-1}$ has empty intersection with $U_{\CF_{i-1}}$.
	If $L_{i-1} \nsubset L'$ then by the statement of this lemma for $U_{\CF_{i-2}}$ the strict transform of $L'$ in $\wt{\bP}_{i-2}$ has empty intersection with $U_{\CF_{i-2}}$.
	From this, the statement for $U_{\CF_{i-1}}$ follows.
\end{proof}
	
\begin{proof}[Proof of Equations \eqref{BlowupcoordinateringforPEquation} and \eqref{BlowuptotaltransformforPEquation}]
	For the equality in \eqref{BlowupcoordinateringforPEquation}, note that
	\[\Big( \Bl_{I_{\CF_{i-1}}^{(i)}} A_{\CF_{i-1}} \Big)_{\left[ g_{V_{i-1}\setminus V_{i}}^{V_{i} \setminus V_{i+1}} \right]_0 } = A_{\CF_{i-1}} \! \left[ \frac{w}{v} \frac{v'}{w'} \middle| \frac{w}{v} \in \alggen{V_{i-1}\setminus V_{i}}{V_i}{V_{i+1}},  \frac{v'}{w'} \in \alggen{V_i}{V_{i-1}}{V_i} \right] . \]
	Thus, we have to consider $\frac{w}{v} \in \alggen{V_{i-1}\setminus V_{i}}{V_i}{V_{i+1}}$ and $\frac{v'}{w'} \in \alggen{V_i}{V_{i-1}}{V_i}$. Then
	\[ \frac{w}{v} \frac{v'}{w'} = \frac{w}{w'} \frac{v'}{v} \in A_{\CF_{i-1}} \! \left[ \alggen{V_i}{V_i}{V_{i+1}} \right] . \]
	For the reverse inclusion, it is enough to consider $\frac{v}{w} \in \alggen{V_i}{V_i}{V_{i+1}}$. We find that
	\[ \frac{v}{w} = \frac{w'}{w} \frac{v}{w'}  \in k \! \left[ \frac{w}{v} \frac{v'}{w'} \middle| \frac{w}{v} \in \alggen{V_{i-1}\setminus V_{i}}{V_i}{V_{i+1}},  \frac{v'}{w'} \in \alggen{V_i}{V_{i-1}}{V_i} \right] , \]
	for arbitrary $w' \in V_{i-1}\setminus V_i$.
	
	Concerning \eqref{BlowuptotaltransformforPEquation}, let $\frac{v}{w} \in \alggen{V_j}{V_{i-1}}{V_i}$ and consider
	\[ \frac{v}{w} = \frac{e_{i+1}}{e_i} \frac{v}{e_{i+1}} \frac{e_i}{w} \in \bigg( \frac{e_{i+1}}{e_i} \bigg) \bigg( \idealgen{V_j}{V_i}{V_{i+1}} \bigg) . \]
	Conversely, let $\frac{v}{w} \in \alggen{V_j}{V_i}{V_{i+1}}$. Then
	\[\frac{e_{i+1}}{e_i} \frac{v}{w} = \frac{v}{e_i} \frac{e_{i+1}}{w} \in  \left( \idealgen{V_j}{V_{i-1}}{V_i} \right)  A_{\CF_i} . \]
\end{proof}
	
\begin{proof}[Proof of Lemma \ref{BlowupcoverLemmaforP}]
	Fix $i \in \{0, \ldots, n-1\}$. It suffices to show that if $\Fp \subset \Bl_{I_{\CF_{i-1}}^{(i)}} A_{\CF_{i-1}}$ is a homogeneous prime ideal and 
	\[ g_{V_{i-1}\setminus V_{i}}^{V_{i} \setminus V_{i+1}'} \defeq \;\;\;\; \prod_{\mathclap{\substack{ v\in V_{i} \setminus V_{i+1}' \\ w\in V_{i-1} \setminus V_{i} } }} \;\;\; \frac{v}{w} \in  \Fp  , \]
	for all $V_{i+1}' \subset V_{i}$ of $\dim_k \, V_{i+1}' = n-i-1$, we already have $I_{\CF_{i-1}}^{(i)} \subset \Fp$, because then $\Fp$ contains the augmentation ideal.
	We define 
	\[ V_{\Fp} \defeq \left\{ v \in V_{i} \middle| \exists w \in V_{i-1}\setminus V_{i} \colon \frac{v}{w} \in \Fp \right\} . \]
	Observe that if $v\in V_{\Fp}$, i.e.\ if there exists $w\in V_{i-1}\setminus V_{i}$ such that $\frac{v}{w} \in \Fp$, then already for all $w' \in V_{i-1}\setminus V_{i}$:
	\[ \frac{v}{w'} = \frac{v}{w} \frac{w}{w'} \in \Fp . \]
	This implies that $V_{\Fp}$ is closed under addition and hence a subspace of $ V_{i}$.
		
	Let $v_1, \ldots, v_l$ be a basis of $V_{\Fp}$ and assume, by the way of contradiction, $l < \dim_k \, V_{i}  =n-i $.
	We can complete $V_{\Fp} $ to a subspace $V_{i+1}' \subset V_{i}$ with $\dim_k \, V_{i+1}' = n-i-1$.
	Then, as $ g_{V_{i-1}\setminus V_{i}}^{V_{i} \setminus V_{i+1}'} \in \Fp$ and $\Fp$ is a prime ideal, there exist $v_{l+1} \in V_{i}\setminus V_{i+1}'$ and $ w \in V_{i-1}\setminus V_{i} $ with $\frac{v_{l+1}}{w} \in \Fp$, i.e.\ $v_{l+1} \in V_{\Fp}$.
	Then $v_1, \ldots, v_l, v_{l+1}$ are linearly independent which is a contradiction.
	In conclusion, we have $V_{\Fp} = V_{i}$ and hence $I_{\CF_{i-1}}^{(i)} \subset \Fp$, using the above observation.
	
	For the second assertion, the same method applies, with the according modifications.
\end{proof}	
	
\begin{proof}[Proof of Lemma \ref{BlowupexplicitLemmaforQ}]
	Let $H' \subset \bQ_V$ be a linear subspace corresponding to $V' \subset V$ and suppose that $\dim \, H' \geq i$ and $H_i \nsubset H'$, i.e.\ $V_{n-i-1} \nsubset V'$.
	If $H_{i-1} \subset H'$, i.e.\ $V_{n-i} \subset V'$, then the strict transform of $H'$ in $W_{\CF^{i-1}}$ corresponds to the ideal
	\[ J_{\CF^{i-1}}' \defeq \left( \idealgen{V_{n-i-1}}{V}{V'}  \right) \subset B_{\CF^{i-1}} . \]
	As there exists some $v\in V_{n-i-1}\setminus V'$, we find that		
	\[ 1 = \frac{v}{v} \in J_{\CF^{i-1}}' . \]
	Therefore the strict transform of $H'$ in $\wt{\bQ}_{i-1}$ has empty intersection with $W_{\CF^{i-1}}$.	
	If $H_{i-1} \nsubset H'$, then the assertion of this lemma for $W_{\CF^{i-2}}$ implies that the strict transform of $H'$ in $\wt{\bQ}_{i-2}$ has empty intersection with $W_{\CF^{i-2}}$. This implies the statement for $W_{\CF^{i-1}}$.
\end{proof}
	
\begin{proof}[Proof of Equation \eqref{BlowupcoordinateringforQEquation} and \eqref{BlowuptotaltransformforQEquation}]
	For equation \eqref{BlowupcoordinateringforQEquation} we have
	\begin{align*}
		\Big( \Bl_{J_{\CF^{i-1}}^{(i)}} B_{\CF^{i-1}} \Big)_{\left[ h_{V_{n-i-2}\setminus V_{n-i-1}}^{V_{n-i-1}\setminus V_{n-i}} \right]_0 } 
			= B_{\CF^{i-1}} \! \left[ \frac{w}{v} \frac{v'}{w'} \middle| \frac{w}{v} \in \alggen{ V_{n-i-2}\setminus V_{n-i-1} }{V_{n-i-1}}{V_{n-i}}, \frac{v'}{w'} \in \alggen{V_{n-i-1}}{V}{V_{n-i-1}} \right] .
	\end{align*}
	First, we consider $\frac{w}{v} \in \alggen{ V_{n-i-2}\setminus V_{n-i-1} }{V_{n-i-1}}{V_{n-i}}$ and $\frac{v'}{w'} \in \alggen{V_{n-i-1}}{V}{V_{n-i-1}}$.
	Then
	\[ \frac{w}{v} \frac{v'}{w'} = \frac{v'}{v} \frac{w}{w'} \in k \! \left[ \alggen{V_{n-i-1}}{V_{n-i-1}}{V_{n-i}} ,  \alggen{V_{n-i-2}}{V}{V_{n-i-1}}  \right] . \]
	We also have to consider elements $\frac{v}{w} \in \alggen{V_{n-i-1}}{V}{V_{n-i}}$.
	If $w \in V_{n-i-1}\setminus V_{n-i}$, then
	\[ \frac{v}{w} \in k \! \left[ \alggen{V_{n-i-1}}{V_{n-i-1}}{V_{n-i}} \right]  , \]
	else $w\in V\setminus V_{n-i-1}$ and we have
	\[ \frac{v}{w} \in k \! \left [ \alggen{V_{n-i-2}}{V}{V_{n-i-1}} \right] .\]	
	For the reverse inclusion, we surely have $\alggen{V_{n-i-1}}{V_{n-i-1}}{V_{n-i}}\subset \alggen{V_{n-i-1}}{V}{V_{n-i}}$.
	It remains to consider $\frac{v}{w} \in \alggen{V_{n-i-2}}{V}{V_{n-i-1}}$.
	If $v\in V_{n-i-2} \setminus V_{n-i-1}$, then we have for arbitrary $w' \in V_{n-i-1} \setminus V_{n-i}$:
	\[\frac{v}{w} = \frac{v}{w'} \frac{w'}{w} \in k \! \left[ \frac{w}{v} \frac{v'}{w'} \middle| \frac{w}{v} \in \alggen{ V_{n-i-2}\setminus V_{n-i-1} }{V_{n-i-1}}{V_{n-i}}, \frac{v'}{w'} \in \alggen{V_{n-i-1}}{V}{V_{n-i-1}} \right] . \]
	For $v\in V_{n-i-1}$ we already have
	\[\frac{v}{w} \in k \! \left[ \alggen{V_{n-i-1}}{V}{V_{n-i}} \right] . \]
	
	For \eqref{BlowuptotaltransformforQEquation}, we consider $\frac{v}{w} \in \alggen{V_{n-i-1}}{V}{V_{n-j-1}}$:
	\[ \frac{v}{w} = \frac{e_{n-i}}{e_{n-i-1}} \frac{e_{n-i-1}}{w} \frac{v}{e_{n-i}} \in \bigg( \frac{e_{n-i}}{e_{n-i-1}} \bigg) \bigg( \idealgen{V_{n-i-2}}{V}{V_{n-j-1}} \bigg) . \]
	Also for $\frac{v}{w} \in \alggen{V_{n-i-2}}{V}{ V_{n-j-1}}$, we have
	\[ \frac{e_{n-i}}{e_{n-i-1}} \frac{v}{w} = \frac{e_{n-i}}{w} \frac{v}{e_{n-i-1}} \in \left( \idealgen{V_{n-i-1}}{V}{V_{n-j-1}} \right)  B_{\CF^{i}} . \]
\end{proof}
	
\begin{proof}[Proof of Lemma \ref{BlowupcoverLemmaforQ}]
	Fix $i \in \{0, \ldots, n-1\}$. It suffices to show that if $\Fq \subset \Bl_{J_{\CF^{i-1}}^{(i)}} B_{\CF^{i-1}}$ is a homogeneous prime ideal such that
	\begin{align*}
		h_{V_{n-i-2}' \setminus V_{n-i-1}}^{V_{n-i-1} \setminus V_{n-i}} \defeq \;\;\;\;\;\;\;\;\; \prod_{\mathclap{\substack{ v\in V_{n-i-1} \setminus V_{n-i} \\ w \in V_{n-i-2}' \setminus V_{n-i-1} }}} \;\;\;\;\;\;\; \frac{v}{w} \in \Fq ,
	\end{align*}
	for all subspaces $V_{n-i-2}' \supset V_{n-i-1}$ with $\dim_k \, V_{n-i-2}' = i+2$, then $J_{\CF^{i-1}}^{(i)} \subset \Fq$, since then $\Fq$ already contains the augmentation ideal.
	For such a prime ideal $\Fq$, we define the set
	\[ V_\Fq \defeq \{0\} \cup \left\{ w\in V\setminus \{0\} \middle| \exists v\in V_{n-i-1} \colon \frac{v}{w} \in B_{\CF^{i-1}} \land \frac{v}{w} \notin \Fq \right\} . \]
	Note that $V_{n-j-1}\setminus V_{n-j} \subset V_\Fq $, for $j=0,\ldots, i$, because we have $ \frac{w}{w} = 1 \notin \Fq$ for such $w \in V_{n-j-1}\setminus V_{n-j}$.
	Hence we conclude that $V_{n-i-1} \subset V_\Fq$.
	Now assume that there exists some $\tilde{w} \in V_\Fq \setminus V_{n-i-1}$.
	Then the subspace $V_{n-i-2}' \defeq V_{n-i-1} \oplus k \tilde{w}$ has $\dim_k \, V_{n-i-2}' = i+2$.
	As 
	\[ h_{V_{n-i-2}' \setminus V_{n-i-1}}^{V_{n-i-1} \setminus V_{n-i}} \in \Fq , \]
	there exist $v \in V_{n-i-1}\setminus V_{n-i}$ and $w\in V_{n-i-2}' \setminus V_{n-i-1}$ such that $\frac{v}{w} \in \Fq$.
	Now write $w = w' + \lambda \tilde{w}$, for $w' \in V_{n-i-1}$ and $\lambda \neq 0$.
	If we have $w' = 0$, then $ \frac{v}{\tilde{w}} \in \Fq$.
	Otherwise, we compute that
	\[ \frac{v}{\lambda \tilde{w}} =  \left( 1 + \frac{w'}{\lambda \tilde{w} } \right) \frac{v}{w} \in \Fq , \]
	hence $\frac{v}{\tilde{w}} \in \Fq$ in this case, too.
	Let $v' \in V_{n-i-1}$ be arbitrary.
	We find that
	\[\frac{v'}{\tilde{w}} = \frac{v'}{v} \frac{v}{\tilde{w}} \in \Fq . \]
	But because $v' \in V_{n-i-1}$ is arbitrary, this contradicts $\tilde{w} \in V_\Fq$.
	This implies that $V_\Fq = V_{n-i-1}$.
	For 
	\[\frac{v}{w} \in \alggen{V_{n-i-1}}{V}{V_{n-i-1}}= \alggen{V_{n-i-1}}{V}{V_\Fq} , \]
	it follows by the definition of $V_\Fq$ that $\frac{v}{w}\in \Fq$.
	Hence we have $J_{\CF^{i-1}}^{(i)} \subset \Fq$.
	
	The analogue of this method also proves the second assertion.
\end{proof}

\subsection{Proof of Proposition \ref{Prop - B Incidence variety X_i isomorphism}}\label{Subsect - Proof of Prop B Incidence variety X_i isomorphism}

	We use the same strategy as in the proof of Theorem \ref{Thm - P Q Isomorphism}:
	We construct an open affine covering $\CU_{\CF_i}$ of $\RX_i$, indexed by truncated flags $\CF_i$, so that there are isomorphisms $U_{\CF_i} \myrightarrow{\sim} \CU_{\CF_i}$ (where $U_{\CF_i}$ is the open covering of $\wt{\bP}_i$ constructed in the proof of Theorem \ref{Thm - P Q Isomorphism}).
	All of these isomorphisms will agree on $\Omega_V$ and its image in $\RX_{i}$, therefore they glue to 
	\[b_i \colon \wt{\bP}_{i} \myrightarrow{\sim} \RX_i ,\]
	as $\wt{\bP}_{i}$ and $\RX_i$ are reduced and separated over $k$.

	We begin by defining these open subsets of $\RX_i$.
	For any $W' \subset W \subset V$, let $U_{W'} \subset \bP_{W}$ denote the open affine subset
	\[ U_{W'} \defeq \Spec k \! \left[ \frac{v}{w} \middle| v\in W, w\in W\setminus W' \right] \subset \bP_W . \]
	Consider for all flags $\CF_i = \left( \{0\}\subset V_{i+1} \subsetneq\ldots \subsetneq V_0 \subsetneq V_{-1} = V \right) $, with $\dim_k \, V_{j} = n-j$, the open subsets
	\[ \CU_{\ul{\CF_i}} \defeq    U_{V_0}\times \ldots\times U_{V_{i+1}} \times \;\; \prod\limits_{\mathclap{\substack{W \subset V, W \notin \{ V,\ldots,V_i \}, \\ \dim_k \, W \geq n-i  }}} \; \bP_{W} \quad  \subset \bP_{\ul{V}, i} \]
	which form a covering of $\bP_{\ul{V}, i}$.
	Each of these open subsets comes with a natural projection
	\begin{equation}\label{Eq - Projection U_F}
	\begin{aligned}
		\bP_{\ul{V}, i}	&\lra	\bP_{V} \times\ldots\times \bP_{V_i}	\\
		\cup \hspace{6pt} & \hspace{51.7pt}	\cup								\\
		\CU_{\ul{\CF_i}}	&\lra U_{V_0}\times \ldots\times U_{V_{i+1}}. 
	\end{aligned}
	\end{equation}
	Note that we have $U_{V_0}\times \ldots\times U_{V_{i+1}} = \Spec \CA_{\CF_i}$, where 
	\[\CA_{\CF_i} \defeq k \! \left[ \alggen{V}{V}{V_0} \right] \otimes \ldots \otimes k \! \left[ \alggen{V_i}{V_i}{V_{i+1}} \right] .\]
	By writing the tensor product here, we want to emphasize that in this context there are no relations between elements of $\alggen{V_j}{V_{j}}{V_{j+1}}$ and $\alggen{V_{j'}}{V_{j'}}{V_{j'+1}}$, for $j \neq j'$.
	Then
	\[ \CU_{\ul{\CF_i}} = \mathrm{multiProj} \; \CA_{\ul{\CF_i}} \]
	where $\CA_{\ul{\CF_i}}$ is the multi-graded $\CA_{\CF_i}$-algebra
	\[ \CA_{\ul{\CF_i}} \defeq \CA_{\CF_i} \otimes \;\; \bigotimes\limits_{\mathclap{\substack{W \subset V, W \notin \{ V,\ldots,V_i \}, \\ \dim_k \, W \geq n-i  }}} \; k \! \left[ v \middle| v\in W \right] \]
	with a grading for each $W \subset V, W \notin \{ V,\ldots,V_i \}$, with $\dim_k \, W \geq n-i$.
	Define 
	\[ \CU_{\CF_i} \defeq \RX_i \cap \CU_{\ul{\CF_i}} \]
	so that 
	\[ \CU_{\CF_i} = \mathrm{multiProj} \;  \CA_{\ul{\CF_i}} / J_{\ul{\CF_i}}  \]
	where $J_{\ul{\CF_i}}$ is the incidence ideal induced form $\CJ_i$.
	Furthermore, let
	\[ \Omega_{\RX_i} \defeq \RX_i \cap \bigg( \;\; \prod\limits_{\mathclap{\substack{W \subset V, \\ \dim_k \, W \geq n-i  }}} \; \Omega_{W} \bigg) \subset \CU_{\CF_i} \]
	which is a dense open subset of $\RX_i$. The coordinate ring of $ \Omega_{\RX_i}$ is given by
	\[ \CO_{\RX_i} (\Omega_{\RX_i})  =  \bigg( \;\; \bigotimes\limits_{\mathclap{\substack{W \subset V, \\ \dim_k \, W \geq n-i  }}} \; k \! \left[ \frac{v}{w} \middle| v\in W, w \in W\setminus \{0\} \right] \bigg) \bigg/ J_{\Omega_{\RX_i}}  \]
	where $J_{\Omega_{\RX_i}}$ is the induced incidence ideal.	
	
	Now let $\RX_{\CF_i}$ be the incidence variety
	\[\RX_{\CF_i} \subset \bP_V \times\ldots\times \bP_{V_i}  \]
	so that we have
	\[ \RX_{\CF_i} \cap ( U_{V_0} \times\ldots\times U_{V_{i+1}} ) = \Spec \CA_{\CF_i} / J_{\CF_i} \]
	where $J_{\CF_i}$ is the corresponding incidence ideal.
	Moreover, define the open affine subset
	\[\Omega_{\CF_i} \defeq \RX_{\CF_i} \cap \left( \Omega_{V} \times\ldots\times \Omega_{V_i} \right) \subset \RX_{\CF_i}  \]	
	with coordinate ring
	\[ \CO_{\RX_{\CF_i}} ( \Omega_{\CF_i} ) = \bigslant{ \bigg( k \! \left[ \alggen{V}{V}{\{0\}} \right] \otimes\ldots\otimes k \! \left[ \alggen{V_i}{V_i}{\{0\}} \right] \bigg) }{J_{\Omega_{\CF_i}}}  \]
	where $J_{\Omega_{\CF_i}}$ is the induced incidence ideal.
	
	To construct the isomorphism $b_i \colon \wt{\bP}_i \myrightarrow{\sim} \RX_i$, it suffices to prove the subsequent claim:
	There are the following isomorphisms in the commutative diagram
	\begin{equation}\label{Eq - X_F_i big commutative diagram}
		\centering
		\begin{tikzcd}
			\wt{\bP}_i												&		& \RX_i \\
			U_{\CF_i}	\arrow[r,"\sim"] \arrow[u,hook]	&\RX_{\CF_i} \cap \left( U_{V_0} \times\ldots\times U_{V_{i+1}} \right) \arrow[r,leftarrow,"\sim"]	&\CU_{\CF_i} \arrow[u,hook] \\
			\Omega_V\arrow[r,"\sim"] \arrow[u,hook]	&\Omega_{\CF_i} \arrow[r,leftarrow,"\sim"] \arrow[u,hook]	&\Omega_{\RX_i} \mathclap{\; ,} \arrow[u,hook]	
		\end{tikzcd} 
	\end{equation}
	and the bottom morphisms are given by the following $k$-algebra isomorphisms of the coordinate rings
	\begin{alignat*}{3}
		\CO_{\wt{\bP}_i} (\Omega_V)	&\overset{\sim}{\longleftarrow}	\CO_{\RX_{\CF_i}}(\Omega_{\CF_i})	&&\myrightarrow{\sim}	\CO_{\RX_i}(\Omega_{\RX_i})		\\
		\frac{v}{w} &\longmapsfrom	\frac{v}{w} \qquad	\;\;\,		\frac{v}{w}	&&\longmapsto		\frac{v}{w} \mathclap{\;.}	
	\end{alignat*}
	Furthermore, we will see from the construction of these isomorphisms in \eqref{Eq - X_F_i big commutative diagram} that they are compatible with $\pi_i \colon \wt{\bP}_i \lra \wt{\bP}_{i-1}$, and the projection $\RX_i \lra \RX_{i-1}$, as well as the projection $\RX_{\CF_i}\cap (U_{V_0} \times\ldots\times U_{V_{i+1}}) \lra \RX_{\CF_{i-1}} \cap ( U_{V_0} \times\ldots\times U_{V_i})$, respectively.

	For the right side of \eqref{Eq - X_F_i big commutative diagram}, we use the following lemma which we will prove at the end of this section.

\begin{lemma}\label{Lemma - X_i open subset reduction isomorphism}
	The projection
	\begin{align}\label{Eq - Projection X_i on U_F}
		\CU_{\CF_i} \lra \RX_{\CF_i} \cap \left( U_{V_0} \times\ldots\times U_{V_{i+1}} \right) ,
	\end{align}
	induced by \eqref{Eq - Projection U_F}, is an isomorphism.
	It induces an isomorphism
	\[ \Omega_{\RX_i} \myrightarrow{\sim} \Omega_{\CF_i} \]
	which is given as a $k$-algebra isomorphism of the coordinate rings
	\begin{align*}
		\CO_{\RX_{\CF_i}}(\Omega_{\CF_i})	&\myrightarrow{\sim}	\CO_{\RX_i}(\Omega_{\RX_i})	\\
		\frac{v}{w} &\longmapsto \frac{v}{w} .
	\end{align*}	
\end{lemma}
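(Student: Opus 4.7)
The strategy is to show that the projection in question forgets only data that are algebraically recoverable from the base, using the incidence relations in $\CJ_i$ combined with the openness condition defining $U_{V_0} \times \ldots \times U_{V_{i+1}}$. Concretely, the plan is to construct an inverse morphism by explicitly reconstructing, for every $W \subset V$ with $\dim_k W \geq n-i$ and $W \notin \{V, V_0, \ldots, V_i\}$, the coordinate $p_W \in \bP_W$ from the data of a point in $\RX_{\CF_i} \cap (U_{V_0} \times \ldots \times U_{V_{i+1}})$.

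For each such $W$, let $j(W) \in \{-1, 0, \ldots, i-1\}$ be maximal with $W \subset V_{j(W)}$; this is well-defined because $\dim_k W \geq n-i = \dim_k V_i$ together with $W \neq V_i$ forces $W \not\subset V_i$, and by maximality $W \not\subset V_{j(W)+1}$. Since $p_{V_{j(W)}} \in U_{V_{j(W)+1}}$, the coordinates of $p_{V_{j(W)}}$ are invertible along $V_{j(W)} \setminus V_{j(W)+1}$, so for any $\alpha_W \in W \setminus V_{j(W)+1}$ and any auxiliary $\alpha'_W \in V_{j(W)} \setminus V_{j(W)+1}$, the incidence relation $v \otimes \alpha_W - \alpha_W \otimes v$ in $\pr^{\ast}_{V_{j(W)}} \CO_{\bP_{V_{j(W)}}} \otimes \pr^{\ast}_W \CO_{\bP_W}$ forces, on the affine patch of $\CU_{\CF_i}$ where $\alpha_W$ is taken as local generator, the identity $v/\alpha_W = (v/\alpha'_W)(\alpha'_W/\alpha_W)$, exhibiting $v/\alpha_W$ as the pullback of an element of $\CA_{\CF_i}/J_{\CF_i}$. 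This provides an explicit ring homomorphism inverting the projection on this chart.

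It then remains to verify that this reconstruction is consistent across patches, i.e., that all incidence relations in $\CJ_i$ are satisfied by the extended tuple, not only those directly between $W$ and $V_{j(W)}$. The key point is transitivity of incidence: for $W'' \subset W'$ in the product, one has $V_{j(W'')} \supset V_{j(W')}$ (because $W'' \subset W' \subset V_{j(W')}$), so the incidence relations inside the flag $\CF_i$ (already contained in $J_{\CF_i}$) give $p_{V_{j(W'')}} \propto p_{V_{j(W')}}|_{V_{j(W'')}}$; restricting to $W''$ yields $p_{W''} \propto p_{V_{j(W'')}}|_{W''} \propto p_{V_{j(W')}}|_{W''} \propto p_{W'}|_{W''}$, as required. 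A short case distinction handles uniformly the situations where $W'$ or $W''$ lies in $\CF_i$, and the local inverses therefore glue to a global inverse morphism.

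For the final assertion, restricting the constructed isomorphism to the open subsets where all nonzero sections $v$ (in every factor) are further inverted yields the isomorphism $\Omega_{\RX_i} \cong \Omega_{\CF_i}$; the formula $v/\alpha_W = (v/\alpha'_W)(\alpha'_W/\alpha_W)$ shows that on coordinate rings the inverse sends $v/w$ to itself, matching the stated map $v/w \mapsto v/w$. The main obstacle is the consistency verification across patches; the rest reduces to standard manipulations of the $\mathrm{multiProj}$ construction.
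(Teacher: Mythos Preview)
Your proposal is correct and rests on the same idea as the paper's proof: for each $W$ not in the flag, the incidence relation between $W$ and $V_{j(W)}$ together with the invertibility of some $\alpha_W \in W \setminus V_{j(W)+1}$ on the chart $U_{V_{j(W)+1}}$ forces the $\bP_W$-coordinate to be determined by the $\bP_{V_{j(W)}}$-coordinate. The paper packages this inductively, removing one non-flag factor $V'$ at a time and showing the algebraic identity $\CA'/J' = (\CA''/J'')[\tilde v]$ for a single degree-one generator $\tilde v \in V' \setminus V_{j'+1}$, which makes the $\mathrm{multiProj}$ isomorphism immediate and hides your transitivity check inside the computation $J' = J''\CA' + (1 \otimes v - \tfrac{v}{\tilde v}\otimes \tilde v \mid v \in V')$; your all-at-once construction with an explicit consistency verification is an equivalent route. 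One slip to fix: from $W'' \subset W' \subset V_{j(W')}$ one obtains $j(W'') \geq j(W')$ and hence $V_{j(W'')} \subset V_{j(W')}$, not $\supset$ as you wrote; your next line $p_{V_{j(W'')}} \propto p_{V_{j(W')}}|_{V_{j(W'')}}$ already uses the correct containment, so the argument goes through once the typo is corrected.
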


	For the left side of \eqref{Eq - X_F_i big commutative diagram}, by using induction, we may suppose that there is an isomorphism
	\begin{equation}\label{Eq - Commutative Diagram U_F_i and X_F_i}
		\centering
		\begin{tikzcd}
			U_{\CF_{i}} \arrow[d] 		& \RX_{\CF_{i}} \cap (U_{V_0} \times\ldots\times U_{V_{i+1}} ) \arrow[d] \\
			U_{\CF_{i-1}} \arrow[r, "\sim"] & \RX_{\CF_{i-1}} \cap (U_{V_0} \times\ldots\times U_{V_i} )
		\end{tikzcd}
	\end{equation}
	which takes on the level of $k$-algebras the form
	\begin{equation*}
		\centering
		\begin{tikzcd}
			A_{\CF_{i}}  		& \CA_{\CF_{i}} / J_{\CF_{i}}  \\
			A_{\CF_{i-1}} \arrow[u,hook] \arrow[r, leftarrow, "\sim"] &  \CA_{\CF_{i-1}} / J_{\CF_{i-1}} \arrow[u,hook] \mathclap{.}
		\end{tikzcd}
	\end{equation*}
	Then we find that
	\[\CA_{\CF_{i}} / J_{\CF_{i}} \cong \bigslant{\left( A_{\CF_{i-1}} \otimes k \! \left[ \alggen{V_i}{V_i}{V_{i+1}} \right] \right)}{J_{\CF_i}'} \]
	where
	\[ J_{\CF_i}' = \left( \frac{v}{w}\otimes\frac{v'}{w'} - \frac{v'}{w}\otimes\frac{v}{w'} \middle| \frac{v}{w} \in \alggen{V}{V}{V_0}\cup\ldots\cup\alggen{V_{i-1}}{V_{i-1}}{V_i} \wedge v \in V_{i}, \frac{v'}{w'} \in \alggen{V_i}{V_i}{V_{i+1}}\right) .\]
	Analogous to the blow-up of the origin in $\BA^n$, the following lemma holds (use for example \cite[Ch.\ 5, Satz 5.10]{K80}):

\begin{lemma}\label{Lemma - A_F Algebra isomorphism}
	The $A_{\CF_{i-1}}$-algebra homomorphism
	\begin{align*}
		\CA_{\CF_i}/J_{\CF_i} \cong \bigslant{\left( A_{\CF_{i-1}}\otimes k \! \left[ \alggen{V_i}{V_i}{V_{i+1}}\right] \right) }{J_{\CF_i}' }  &\lra A_{\CF_i} \; \\
		\frac{v}{w} \otimes 1 &\longmapsto \frac{v}{w}	\\
		1 \otimes \frac{v}{w} &\longmapsto \frac{v}{w}
	\end{align*}
	is an isomorphism.
\end{lemma}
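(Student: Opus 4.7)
The plan is first to dispatch well-definedness and surjectivity, and then to focus on injectivity, which constitutes the main content. The map is well-defined because both $\frac{v}{w}\cdot\frac{v'}{w'}$ and $\frac{v'}{w}\cdot\frac{v}{w'}$ equal $\frac{vv'}{ww'}$ inside the field of fractions of $\mathrm{Sym}\,V$, and hence the generators of $J_{\CF_i}'$ are sent to zero. Surjectivity is immediate from the description $A_{\CF_i}=A_{\CF_{i-1}}[\alggen{V_i}{V_i}{V_{i+1}}]$ recorded in equation \eqref{BlowupcoordinateringforPEquation}.

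For injectivity, my plan is to use a normal form argument. The defining relation of $J_{\CF_i}'$ encodes the exchange $\frac{v}{w}\otimes\frac{v'}{w'}\equiv\frac{v'}{w}\otimes\frac{v}{w'}$, valid whenever $\frac{v}{w}$ is a generator of $A_{\CF_{i-1}}$ with $v\in V_i$ and $\frac{v'}{w'}\in\alggen{V_i}{V_i}{V_{i+1}}$. Combining this exchange with the multiplicative identities already present within the subring $k[\alggen{V_i}{V_i}{V_{i+1}}]$, such as $\frac{v}{w}\cdot\frac{w}{w'}=\frac{v}{w'}$, one can reduce every element modulo $J_{\CF_i}'$ to a canonical representative matching the description of $A_{\CF_i}\subset\mathrm{Frac}(\mathrm{Sym}\,V)$. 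The uniqueness of this representation then yields injectivity.

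Alternatively, and more directly, one may follow the hint given in the paper and invoke \cite[Ch.\ 5, Satz 5.10]{K80}, which gives the standard presentation of an affine chart of a blow-up. Since by construction $A_{\CF_i}$ is the degree-zero part of a localization of the Rees algebra for $\Bl_{I_{\CF_{i-1}}^{(i)}}\Spec A_{\CF_{i-1}}$ (see equation \eqref{BlowupcoordinateringforPEquation}), and $J_{\CF_i}'$ consists precisely of the incidence-type relations that cut out such a chart inside the product of the base with the projective space of generators, this result identifies the two rings immediately. I expect the main obstacle in pushing through the normal form argument directly to be the fact that the exchange relations of $J_{\CF_i}'$ do not by themselves reduce the combinatorial complexity of a pure tensor; one must couple them with the internal relations of $k[\alggen{V_i}{V_i}{V_{i+1}}]$ in a controlled manner. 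Invoking the standard blow-up presentation from \cite{K80} sidesteps this difficulty entirely and keeps the proof short.
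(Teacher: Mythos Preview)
Your proposal is correct and lands on the same approach as the paper: the paper does not supply a detailed proof but simply points to \cite[Ch.\ 5, Satz 5.10]{K80} as a standard presentation of an affine blow-up chart, which is exactly what you invoke in your second paragraph. Your remarks on well-definedness and surjectivity are fine supplementary detail, and your normal-form sketch is a reasonable alternative, but since you yourself conclude that the cleanest route is via the Kunz reference, you are in agreement with the paper.
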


	This gives an isomorphism
	\[ U_{\CF_i} \myrightarrow{\sim} \RX_{\CF_{i}} \cap (U_{V_0} \times\ldots\times U_{V_{i+1}} ) \]
	such that \eqref{Eq - Commutative Diagram U_F_i and X_F_i} commutes.
	By further localizing we find that 
	\[ \Omega_V \myrightarrow{\sim} \Omega_{\CF_i} \]
	is given by the $k$-algebra isomorphism 
	\begin{align*}
		\CO_{\RX_{\CF_i}} (\Omega_{\CF_i} ) &\myrightarrow{\sim} \CO_{\wt{\bP}_i} (\Omega_V) = k \! \left[ \frac{v}{w} \middle| v \in V, w \in V \setminus\{0\} \right] \\
		\frac{v}{w} &\longmapsto \frac{v}{w} .
	\end{align*}

	For the second claim we show that
	\begin{align}\label{Eq - Divisor in X_i}
		\RX_i \cap \Bigg(  \;\;\;\;  \prod\limits_{\mathclap{\substack{V' \subsetneq W \subset V}}} \; \bP_{W/V'} \times \bP_{V'} \times \;\; \prod\limits_{\mathclap{\substack{V' \nsubset W \subset V,  \\ \dim_k \, W \geq n-i}}} \; \bP_{W} \Bigg) 
	\end{align} 
	and the divisor $b_i ( D_{V'} )$ agree on each $\CU_{\CF_i}$.
	For a truncated flag $\CF_i$, let $j'$ be the smallest $j'\in \{-1,\ldots,i-1\}$ such that $V' \subsetneq V_{j'}$.
	Then on $\CU_{\CF_i}$, \eqref{Eq - Divisor in X_i} is given by the ideal
	\begin{align*}
		I_{\CF_i} \defeq \sum\limits_{j= -1}^{j'} \left( \alggen{V'}{V_j}{V_{j+1}} \right) \subset \CA_{\CF_i} / J_{\CF_i}.
	\end{align*}
	If $V' \neq V_{j'+1}$, then there exists $v \in V' \setminus V_{j'+1}$, hence
	\[1 = \frac{v}{v} \in \left( \alggen{V'}{V_{j'}}{V_{j'+1}} \right). \]
	Therefore, it is $I_{\CF_i} =\CA_{\CF_i} / J_{\CF_i}$, i.e.\ \eqref{Eq - Divisor in X_i} does not intersect $\CU_{\CF_i}$.
	Moreover, in this case $D_{V'}$ has empty intersection with $U_{\CF_i}$, by the analogous argument in $A_{\CF_i}$, so that the claim holds.
	
	On the other hand, if $V' =  V_{j'+1}$ one computes that 
	\begin{align}
		I_{\CF_i} = \bigg( \frac{e_{j'+2}}{e_{j'+1}} \bigg) 
	\end{align}
	where, like before, $e_0,\ldots,e_n$ is a basis of $V$ which is subordinate to $\CF_i$.
	But this is exactly the ideal of $b_i ( D_{V'} )$ in $\CU_{\CF_i}$, hence the claim also follows in this case.

\begin{proof}[Proof of Lemma \ref{Lemma - X_i open subset reduction isomorphism}]
	It suffices to show the statement so to say for one subspace at a time:
	Let $V' \subset V$, $V' \notin \{V,\ldots,V_i\}$ be a subspace with $\dim_k \, V' \geq n-i$, and let
	\[ S \subset \left\{ W \subset V \middle| \dim_k \, W \geq n-i \right\}  \text{, such that $\{V', V\ldots, V_i\} \subset S$.} \]
	Let $S' \defeq S \setminus \{V', V\ldots, V_i\}$.
	Then we claim that the projection
	\begin{align}\label{Eq - X'_i X''_i morphism}
		\RX'_i \cap \bigg( U_{V_0} \times\ldots\times U_{V_{i+1}} \times \bP_{V'} \times \;\; \prod\limits_{\mathclap{\substack{W \in S' }}} \; \bP_{W} \bigg) 
			\lra \RX''_i \cap \bigg( U_{V_0} \times\ldots\times U_{V_{i+1}} \times \;\; \prod\limits_{\mathclap{\substack{W \in S' }}} \; \bP_{W} \bigg) 
	\end{align}
	where $\RX'_i$ and $\RX''_i$ are the respective incidence varieties, is in fact an isomorphism.

	Let 
	\begin{align*}
		\CA' &= \CA_{\CF_i} \otimes k \! \left[ v \middle| v \in V' \right] \otimes \;\; \bigotimes\limits_{\mathclap{\substack{W \in S' }}} \; k \! \left[ v \middle| v \in W \right] , 	\\
		\CA''&= \CA_{\CF_i}  \otimes \;\; \bigotimes\limits_{\mathclap{\substack{W \in S' }}} \; k \! \left[ v \middle| v \in W \right] 
	\end{align*}
	so that \eqref{Eq - X'_i X''_i morphism} is the morphism
	\begin{align}\label{Eq - X'_i X''_i morphism multiProj}
		\mathrm{multiProj} \; \CA' / J' \lra 	\mathrm{multiProj} \; \CA'' / J''
	\end{align}
	where $J'$ and $J''$ are the incidence ideals of $\RX'_i$ respectively $\RX''_i$.
	Let $j' \in \{-1,\ldots,i-1\}$ such that $V'\subset V_{j'}$ and $V' \nsubset V_{j'+1} $, and choose an arbitrary $\tilde{v} \in V' \setminus V_{j'+1}$.
	Using the relations of the incidence ideal, one computes that
	\begin{align}\label{Eq - J' J'' ideal equality}
		J' = J'' \CA' + \left( 1 \otimes v - \frac{v}{\tilde{v}} \otimes \tilde{v} \middle| v \in V' \right) \subset  \CA' .
	\end{align}
	Note that $\CA'$ is a graded $\CA''$-algebra with the grading $\deg v = 1 $ for all $v \in V'$.
	If we consider the quotients, it follows from \eqref{Eq - J' J'' ideal equality} that $\CA' / J' $ is in fact generated as a $\CA'' / J''$-algebra by the single element $\tilde{v} \in V'$ with $\deg \tilde{v} = 1$:
	\[ \CA' /J' = (\CA'' / J'') [ \tilde{v} ] .\]
	This implies that \eqref{Eq - X'_i X''_i morphism multiProj} is an isomorphism.
	Moreover, let
	\begin{align*}
		\Omega_{\RX'_i} &\defeq \RX'_i \cap \bigg( \Omega_V \times\ldots\times \Omega_{V_{i}} \times \Omega_{V'} \times \;\; \prod\limits_{\mathclap{\substack{W \in S' }}} \; \Omega_{W} \bigg)	, \\
		\Omega_{\RX''_i}&\defeq \RX''_i \cap \bigg( \Omega_V \times\ldots\times \Omega_{V_{i}} \times \;\; \prod\limits_{\mathclap{\substack{W \in S' }}} \; \Omega_{W} \bigg)	.
	\end{align*}
	Then it follows by localizing $\CA'' / J'' \hookrightarrow \CA' / J'$ and taking elements of degree $0$ that
	\begin{align*}
		\Omega_{\RX'_i}	\myrightarrow{\sim} \Omega_{\RX''_i}
	\end{align*}
	is given by 
	\begin{align*}
		\CO_{\RX''_i} (\Omega_{\RX''_i}) &\myrightarrow{\sim} \CO_{\RX'_i} (\Omega_{\RX'_i})	\\
		\frac{v}{w} &\longmapsto \frac{v}{w}	.
	\end{align*}
\end{proof}

\section{Modular interpretation}\label{Sect - Modular interpretation}

\subsection{The functor of points of $\bP_V$, $\bQ_V$ and $\bB_V$}

Recall the well-known description of the functor of points of the projective space $\bP_V$ \cite[Ch.\ II Thm.\ ~7.1]{H77}: 

\begin{proposition}\label{Prop - Points of P}
	The $k$-scheme $\mathrm{\mathbf{P}}_V$ represents the functor on $k$-schemes which associates to a $k$-scheme $T$ the set of isomorphism classes of invertible sheaves $\CL$ on $T$ together with a surjection $ l \colon V \otimes_k \CO_T \myrightarrowdbl{} \CL$.
\end{proposition}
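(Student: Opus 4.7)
The plan is to construct a natural transformation in both directions between $\Hom_k(-, \bP_V)$ and the functor $F$ described in the statement, and then to check they are mutually inverse. Since $\bP_V = \Proj(\mathrm{Sym}\,V)$, it is equipped with the tautological invertible sheaf $\CO_{\bP_V}(1)$ and a canonical surjection $l_{\mathrm{taut}} \colon V \otimes_k \CO_{\bP_V} \twoheadrightarrow \CO_{\bP_V}(1)$ coming from the inclusion of $V$ into the degree-one piece of $\mathrm{Sym}\,V$. For any $f \colon T \to \bP_V$, pullback produces a pair $(f^{\ast}\CO_{\bP_V}(1), f^{\ast}l_{\mathrm{taut}})$ on $T$, and this assignment is visibly functorial in $T$.

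For the reverse direction, I would construct the morphism locally using the standard affine cover. Fix a basis $e_0,\ldots, e_n$ of $V$ and set $s_i \defeq l(e_i \otimes 1) \in \Gamma(T, \CL)$. By surjectivity of $l$, the open subsets $T_i \defeq \{t \in T \mid s_i(t) \neq 0\}$ cover $T$; on $T_i$ the section $s_i$ trivialises $\CL$ so that each ratio $s_j/s_i$ is a well-defined element of $\Gamma(T_i, \CO_T)$. Using the cover $\bP_V = \bigcup_i D_+(e_i)$ with $D_+(e_i) = \Spec k[e_j/e_i \mid j \neq i]$, define $f_i \colon T_i \to D_+(e_i)$ by the $k$-algebra homomorphism $e_j/e_i \mapsto s_j/s_i$. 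One then checks that the $f_i$ agree on the overlaps $T_i \cap T_j$ by comparing their restrictions to $D_+(e_i e_j)$: on this overlap both are determined by the same ratios $s_k/s_i = (s_k/s_j)(s_j/s_i)$, so they glue to a global morphism $f \colon T \to \bP_V$.

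The remaining verifications are that the two constructions are mutually inverse (pulling back the tautological pair under $f$ recovers $(\CL, l)$, up to canonical isomorphism, and vice versa), and that the morphism $f$ is independent of the chosen basis; the latter follows because gluing can equally be performed with respect to any generating set $l(v) \in \Gamma(T, \CL)$, $v \in V$, using the universal property of the localisations $\CL[s^{-1}]$. The principal (but essentially bookkeeping) obstacle lies in the gluing step and in tracking the isomorphism $f^{\ast}\CO(1) \cong \CL$ carefully enough to conclude that the correspondence is a bijection of sets of isomorphism classes. Since this is precisely the content of \cite[Ch.\ II Thm.\ 7.1]{H77}, I would largely defer to that reference rather than rework all the details.
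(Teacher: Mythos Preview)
Your proposal is correct and in fact goes further than the paper: the paper gives no proof at all, merely citing \cite[Ch.\ II Thm.\ 7.1]{H77} as a well-known fact. Your sketch of the standard argument (pullback of the tautological pair in one direction, gluing via section ratios in the other) is the content of that reference, so deferring to it as you do is exactly what the paper does.
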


Here, two pairs $(\CL, l)$ and $(\CL', l')$ are \textit{isomorphic} if $\CL \cong \CL'$ are isomorphic as invertible sheaves such that $l$ and $l'$ are compatible with this isomorphism.

Pink and Schieder gave an analogous description for the functor of points of $\bQ_V$ \cite[Ch.\ ~7]{PS14}, \cite[Cor.\ 6.4]{S09}:

\begin{definition}\label{Def - Reciprocal map}
	Let $T$ be a $k$-scheme and $\CL$ an invertible sheaf on $T$. A map 
	\[r \colon V\setminus \{0\} \lra \Gamma(T, \CL)\]
	is called \textit{reciprocal} if 
	\begin{altitemize}
		\item[(i) ] $r(\lambda v ) = \lambda^{-1} r(v)$ for all $v\in V\setminus \{0\}$, $\lambda \in k\unts$; and
		\item[(ii) ] $r(v)  r(v') = r(v+v') \left( r(v)+r(v') \right)$ for all $v,v' \in V\setminus\{0\}$ with $v+v' \neq 0$.
	\end{altitemize}
\end{definition}

\begin{proposition}\label{Prop - Points of Q}
	The $k$-scheme $\mathrm{\mathbf{Q}}_V$ represents the functor on $k$-schemes which associates to a $k$-scheme $T$ the set of isomorphism classes of invertible sheaves $\CL$ on $T$ together with a reciprocal map $ r \colon V\setminus \{0\} \lra \Gamma\left( T, \CL \right)$ whose image are generating sections of $\CL$.
\end{proposition}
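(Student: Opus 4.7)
The plan is to reduce the statement to the general representability theorem for $\Proj R$ when $R$ is a graded $k$-algebra generated in degree one. In that generality, $\Proj R$ represents the functor sending a $k$-scheme $T$ to the set of isomorphism classes of pairs $(\CL, \phi)$, where $\CL$ is an invertible sheaf on $T$ and $\phi\colon R \to \bigoplus_{n\geq 0} \Gamma(T,\CL^{\otimes n})$ is a graded $k$-algebra homomorphism whose degree-one part $\phi_1\colon R_1 \to \Gamma(T,\CL)$ has image generating $\CL$. I would open the proof by invoking this fact, either by referring to it or by the same argument as used for Proposition \ref{Prop - Points of P} in \cite[Ch.\ II Thm.\ 7.1]{H77}.

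Applied to $R = R_V$, the relations recalled from \cite[Thm.\ 1.6]{PS14} assert that $R_V$ is presented by the generators $\frac{1}{v}$ in degree one, for $v \in V\setminus\{0\}$, modulo precisely the two families of relations
\begin{align*}
	\frac{1}{\lambda v} &= \lambda^{-1}\frac{1}{v}, 	&\lambda&\in k\unts,\\
	\frac{1}{v}\frac{1}{v'} &= \frac{1}{v+v'}\!\left(\frac{1}{v}+\frac{1}{v'}\right), 		&v+v'&\neq 0.
\end{align*}
By the universal property of an algebra given by generators and relations, specifying a graded $k$-algebra homomorphism $\phi$ as above is therefore the same datum as specifying a set-theoretic map
\[r\colon V\setminus\{0\} \lra \Gamma(T,\CL), \quad v\longmapsto \phi\!\left(\tfrac{1}{v}\right), \]
which satisfies exactly the two conditions (i) and (ii) of Definition \ref{Def - Reciprocal map}. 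Moreover, the condition that the image of $\phi_1$ generates $\CL$ becomes verbatim the condition that the image of $r$ generates $\CL$.

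Finally, I would remark that the two notions of equivalence of pairs coincide: on both sides, an isomorphism $(\CL,\phi) \cong (\CL',\phi')$ respectively $(\CL,r) \cong (\CL', r')$ amounts to an isomorphism of the underlying invertible sheaves compatible with the respective structure map, and this compatibility is the same since $\phi$ and $r$ determine each other. I do not expect a serious obstacle in this argument; the only step that demands genuine care is correctly invoking the $\Proj$-representability theorem for algebras generated in degree one, and checking that \cite[Thm.\ 1.6]{PS14} furnishes a genuine presentation of $R_V$ so that the correspondence $\phi \leftrightarrow r$ is well-defined in both directions.
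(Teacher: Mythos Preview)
The paper does not supply its own proof of this proposition; it is quoted from \cite[Ch.~7]{PS14} and \cite[Cor.~6.4]{S09} and stated without argument. Your proposal is a correct and natural reconstruction: use the closed embedding $\bQ_V \hookrightarrow \BP_k^{q^{n+1}-2}$ coming from the degree-one generators $\frac{1}{v}$, apply the standard functorial description of morphisms to projective space, and then observe via the presentation in \cite[Thm.~1.6]{PS14} that factoring through $\bQ_V$ is exactly the reciprocal-map condition. This is in fact the strategy Pink and Schieder use, so your approach matches the cited proof in spirit; the only point deserving a word of care is that the ``$\Proj$-representability'' you invoke is most cleanly phrased as representability of $\BP^N$ plus the closed-subscheme condition, rather than as a black-box statement for arbitrary $\Proj R$.
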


Again, two such pairs $(\CL, r)$ and $(\CL', r')$ are \textit{isomorphic} if there is an isomorphism $\CL \cong \CL'$ of invertible sheaves which is compatible with $r$ and $r'$.

For $\Omega_V \subset \bP_V$ and $\Omega_V \subset \bQ_V$, there is the following description \cite[Prop.\ 7.9, 7.11]{PS14}:

\begin{proposition}
	The open subscheme $\Omega_V \subset \mathrm{\mathbf{P}}_V$ represents the subfunctor on $k$-schemes which associates to a $k$-scheme $T$ the set of isomorphism classes $(\CL, l )$ such that all sections $l(v\otimes 1) \in \Gamma(T,\CL)$ are nowhere vanishing, for $v \in V\setminus \{0\}$.
	
	The open subscheme $\Omega_V \subset \mathrm{\mathbf{Q}}_V$ represents the subfunctor on $k$-schemes which associates to a $k$-scheme $T$ the set of isomorphism classes $(\CL, r )$ such that all sections $r(v) \in \Gamma (T, \CL)$ are nowhere vanishing, for $v \in V \setminus \{0\}$.
\end{proposition}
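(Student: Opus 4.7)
The plan is to reduce both assertions to the general fact that, for a $k$-scheme $X$ equipped with a line bundle $\CM$ and a family of global sections $s_\alpha \in \Gamma(X, \CM)$, the open subscheme $U \subset X$ defined as the simultaneous non-vanishing locus of all the $s_\alpha$ represents the subfunctor of the functor of points of $X$ consisting of those morphisms whose pullbacks of the $s_\alpha$ are everywhere non-vanishing. Indeed, a morphism $f \colon T \to X$ factors through $U$ if and only if the image $f(T)$ avoids the vanishing locus of each $s_\alpha$, which in turn is equivalent to the section $f^{\ast} s_\alpha$ being nowhere vanishing in $f^{\ast}\CM$ for every $\alpha$.

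For the first statement, I would apply this principle with $X = \mathrm{\mathbf{P}}_V$, $\CM = \CO_{\mathrm{\mathbf{P}}_V}(1)$, and the sections $v \in V \subset \Gamma(\mathrm{\mathbf{P}}_V, \CO_{\mathrm{\mathbf{P}}_V}(1))$. The vanishing locus of $v$ in $\mathrm{\mathbf{P}}_V$ is the hyperplane $\mathrm{\mathbf{P}}_{V/\langle v\rangle}$, and $\Omega_V$ was defined precisely as the complement of the union of all such hyperplanes. Under the representability in Proposition \ref{Prop - Points of P}, the universal datum on $\mathrm{\mathbf{P}}_V$ sends $v \otimes 1$ to the tautological global section $v$, so for a classifying morphism $f \colon T \to \mathrm{\mathbf{P}}_V$ we have $l(v \otimes 1) = f^{\ast}v$, and the two conditions match. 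Nonzero scalar multiples of $v$ cut out the same hyperplane, so it is consistent to require nowhere-vanishing for every $v \in V \setminus \{0\}$ rather than only a set of representatives.

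For the second statement, I would apply the same principle with $X = \mathrm{\mathbf{Q}}_V$, $\CM = \CO_{\mathrm{\mathbf{Q}}_V}(1)$, and the sections $\tfrac{1}{v} \in R_V \subset \Gamma(\mathrm{\mathbf{Q}}_V, \CO_{\mathrm{\mathbf{Q}}_V}(1))$. The excerpt already records that $\Omega_V \subset \mathrm{\mathbf{Q}}_V$ is the open subset on which $\prod_{v \in V\setminus\{0\}} \tfrac{1}{v}$ does not vanish, so it coincides with the simultaneous non-vanishing locus of all $\tfrac{1}{v}$ for $v \in V \setminus \{0\}$. Under the representability in Proposition \ref{Prop - Points of Q}, the universal reciprocal map sends $v$ to $\tfrac{1}{v}$, so a classifying morphism $f$ satisfies $r(v) = f^{\ast}\tfrac{1}{v}$; the general principle then yields the claim. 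The reciprocal identity $r(\lambda v) = \lambda^{-1} r(v)$ ensures consistency under scaling of $v$ by $k\unts$, analogously to the projective case.

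No serious obstacle is anticipated: the argument is essentially formal once one identifies the universal tautological sections in each $\Proj$-description with the generators of the corresponding graded $k$-algebra, which is built into the proofs of Propositions \ref{Prop - Points of P} and \ref{Prop - Points of Q}. The only bookkeeping step worth emphasizing is the match-up between the stated description of $\Omega_V$ inside $\mathrm{\mathbf{P}}_V$ (respectively $\mathrm{\mathbf{Q}}_V$) and the joint non-vanishing locus of the distinguished sections, which has already been recorded in the preceding subsections.
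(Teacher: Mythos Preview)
Your argument is correct and is the standard one. Note, however, that the paper does not supply its own proof of this proposition: it simply cites \cite[Prop.\ 7.9, 7.11]{PS14}, so there is no in-paper proof to compare against. Your reduction to the general principle that the simultaneous non-vanishing locus of a family of global sections represents the corresponding subfunctor, together with the identification of the universal sections as $v \in \Gamma(\mathbf{P}_V,\CO(1))$ and $\tfrac{1}{v} \in \Gamma(\mathbf{Q}_V,\CO(1))$, is exactly what one expects and is presumably what the cited reference does as well.
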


For $(\CL, l) \in \Omega_V (T)$, respectively $(\CL, r) \in \Omega_V (T)$, any nowhere vanishing global section of $\CL$ yields a trivialization $\CL \cong \CO_T$.
Then the birational equivalence of $\bP_V$ and $\bQ_V$ is given in terms of $T$-valued points by:
\begin{alignat*}{2}
	\bP_V (T) \supset \Omega_V (T)	&		\,\, \;\;\!\!\! =			&& \;	\Omega_V (T)	\subset \bQ_V (T)			\\
					(\CO_T,l)		&\longmapsto			&& \, (\CO_T, \tfrac{1}{l})				 .
\end{alignat*}

To construct a desingularization of $\bQ_V$ which also dominates $\bP_V$, Pink and Schieder started by defining another functor on $k$-schemes, called $B_V$ \cite[Ch.\ 10]{PS14}, \cite[Ch.\ ~6]{S09}:
To a $k$-scheme $T$, $B_V$ associates the set of isomorphism classes of objects consisting of the following data:
For every subspace $\{0\} \neq W \subset V$ an invertible sheaf $\CL_{W}$ on $T$, and a surjection
\[l_{W} \colon W \otimes_k \CO_T \myrightarrowdbl{} \CL_{W} .\]
Furthermore, for every inclusion $\{0\} \neq W' \subset W \subset V$ a morphism
\[ \psi_{W'}^{W} \colon \CL_{W'} \lra \CL_{W} \]
of invertible sheaves such that the diagram
\[\begin{tikzcd}
	W \otimes_k \CO_T \arrow[r, two heads, "l_{W}"] 		& \CL_{W} \arrow[d,leftarrow, "\psi_{W'}^{W}"] \\
	W' \otimes_k \CO_T \arrow[r, two heads, "l_{W'}"] \arrow[u, hook]	& \CL_{W'}  
\end{tikzcd}\]
commutes. We denote such an object by $(\CL,l,\psi)$.
Two objects $(\CL,l,\psi)$ and $(\CL',l',\psi')$ are \textit{isomorphic} if for all subspaces $\{0\} \neq W \subset V$ there are isomorphisms $\CL_{W} \cong \CL_{W}^{\prime}$ of invertible sheaves which are compatible with $l_{W}$ and $l_{W}^{\prime}$.
Note that this already implies that $\psi_{W'}^{W}$ and ${\psi'}_{W'}^{W}$ are compatible as the $l_{W}$ are surjective.
To a morphism $f \colon T' \lra T$ of $k$-schemes we associate the map that pulls all this data back along $f$.

Pink and Schieder showed that $B_V$ is represented by a closed subscheme of 
\[\bP_{\ul{V},n-1} = \;\; \prod\limits_{\mathclap{\substack{\{0\} \neq W \subset V}}} \;\;\; \bP_{W} \]
\cite[Thm.\ ~10.16]{PS14}, \cite[Prop.\ ~6.5]{S09}.
Using the following proposition, we observe that this closed subscheme is in fact the incidence variety $\mathrm{X}_{n-1} \subset \bP_{\ul{V},n-1}$.
In particular, the desingularization by Pink and Schieder is isomorphic to $\bB_V$.

\begin{proposition}\label{Prop - Points of B}
	The $k$-scheme $\mathrm{\mathbf{B}}_V \cong \mathrm{X}_{n-1} \subset \mathrm{\mathbf{P}}_{\ul{V},n-1}$ represents the functor $ B_V$ on $k$-schemes, defined as above.
\end{proposition}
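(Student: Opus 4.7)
The plan is to exploit the functorial description of the ambient product $\bP_{\ul{V},n-1}$ and then translate the defining incidence ideal $\CJ_{n-1}$ into an equivalent condition on the data $(\CL_W, l_W)$. Since Pink and Schieder have already established that $B_V$ is representable by some closed subscheme of $\bP_{\ul{V},n-1}$, it suffices to identify the closed conditions cut out by $\CJ_{n-1}$ with the existence of the compatibility morphisms $(\psi_{W'}^W)$; the identification $\bB_V \cong \RX_{n-1}$ then comes for free from Proposition \ref{Prop - B Incidence variety X_i isomorphism}.

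First, by Proposition \ref{Prop - Points of P} applied to each factor, a $T$-valued point of $\bP_{\ul{V},n-1}$ is precisely a collection $(\CL_W, l_W)_{\{0\} \neq W \subset V}$ of invertible sheaves together with surjections $l_W \colon W \otimes_k \CO_T \twoheadrightarrow \CL_W$, taken up to isomorphism. Under this description, the generators $v \otimes v' - v' \otimes v$ of the incidence ideal, for $V'' \subset V' \subset V$ and $v, v' \in V''$, translate into the condition
\[ l_{V'}(v) \otimes l_{V''}(v') \;=\; l_{V'}(v') \otimes l_{V''}(v) \quad \text{in } \Gamma(T, \CL_{V'} \otimes \CL_{V''}) . \]

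The core step is then to show that, on a given $T$-point of $\bP_{\ul{V},n-1}$, these incidence conditions are equivalent to the existence of a (necessarily unique) family of $\CO_T$-linear morphisms $\psi_{V''}^{V'} \colon \CL_{V''} \to \CL_{V'}$, for each $\{0\} \neq V'' \subset V' \subset V$, making the defining diagrams of $B_V$ commute. The easy direction is formal: given such a $\psi_{V''}^{V'}$, viewed locally as a section of $\CL_{V'} \otimes \CL_{V''}^{-1}$, commutativity of the square forces $l_{V'}(v) = \psi_{V''}^{V'} \cdot l_{V''}(v)$ for all $v \in V''$, so the incidence identity follows by symmetry of the tensor product of sections.

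The reverse direction is the main obstacle. I would work locally on $T$: since $l_{V''}$ is surjective onto the invertible sheaf $\CL_{V''}$, on a sufficiently small open $U \subset T$ some $l_{V''}(v_0)$ is a generator of $\CL_{V''}|_U$, so there is a unique $\CO_U$-linear map $\psi \colon \CL_{V''}|_U \to \CL_{V'}|_U$ sending $l_{V''}(v_0)$ to $l_{V'}(v_0)$. The incidence relation $l_{V'}(v) \otimes l_{V''}(v_0) = l_{V'}(v_0) \otimes l_{V''}(v)$ then forces $\psi(l_{V''}(v)) = l_{V'}(v)$ for every $v \in V''$, which both makes the diagram commute locally and shows the local definition is independent of the choice of $v_0$ on overlaps; the local $\psi$'s therefore glue to a global $\psi_{V''}^{V'}$. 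Uniqueness of each $\psi_{V''}^{V'}$, together with the surjectivity of $l_{V'''}$, automatically forces the chain compatibility $\psi_{V'''}^{V'} = \psi_{V''}^{V'} \circ \psi_{V'''}^{V''}$ for $V''' \subset V'' \subset V'$, so no further conditions arise. Combining these equivalences with the representability result of Pink and Schieder identifies their closed subscheme of $\bP_{\ul{V},n-1}$ with the incidence variety $\RX_{n-1}$, proving the proposition.
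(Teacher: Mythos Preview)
Your proposal is correct and follows essentially the same route as the paper: both arguments describe $T$-points of $\bP_{\ul{V},n-1}$ as tuples $(\CL_W,l_W)$, then prove the key lemma that the incidence relations for a pair $W'\subset W$ hold if and only if a compatible $\psi_{W'}^{W}$ exists, via the same local argument using a generating section of $\CL_{W'}$. Your appeal to Pink--Schieder's representability result is unnecessary---once you have shown that factoring through $\RX_{n-1}$ is equivalent to the existence of the $\psi$'s, you have directly proved that $\RX_{n-1}$ represents $B_V$, which is exactly what the paper does.
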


\begin{proof}
	Observe that the $k$-scheme $\bP_{\ul{V},n-1}$ represents the functor which associates to a\linebreak $k$-scheme $T$ the set of isomorphism classes of the following data:
	For every subspace\linebreak ${\{0\} \neq W \subset V}$ an invertible sheaf $\CL_{W}$ on $T$ and a surjection $l_{W} \colon  \CO_T \otimes_k W \myrightarrowdbl{} \CL_{W}$.
	More precisely, by the universal property of the fibre product, a morphism $h \colon T \lra \bP_{\ul{V},i}$ corresponds to a family of morphisms $f_{W} \colon T \lra \bP_{W}$ indexed by all $\{0\} \neq W \subset V$.
	These $f_{W}$ in turn correspond to surjections
	\[ l_{W} \colon W \otimes_k  \CO_T  \myrightarrowdbl{} \CL_{W}\]
	such that
	\[ \CL_{W} \cong f_{W}^{\ast} \CO_{\bP_{W}} (1) \, \text{, and }\, f_{W}^{\ast} (v) = l_W (v) \defeq l_{W}(v\otimes 1) \, \text{, for all $v\in W$.} \]

	Such a morphism $h$ factors through $\RX_{n-1}$ if and only if the inverse image ideal sheaf of $\CJ_{n-1}$ under $h$ vanishes.
	By \eqref{Eq - Generators of incidence ideal sheaf}, $h^{-1}\CJ_{n-1}  \cdot \CO_T$ is generated by
	\begin{align}\label{Eq - Inverse image ideal sheaf generators}
		h^{-1}\left( v \otimes v' - v' \otimes v \middle| v,v' \in W' \right)^{\sim} \cdot \CO_T ,
	\end{align}
	for all $\{0\} \neq W' \subset W \subset  V$, where we extend
	\[\left( v \otimes v' - v' \otimes v \middle| v,v' \in W' \right)^{\sim} \subset \pr_W^{\ast} \CO_{\bP_W} \otimes_k  \pr_{W'}^{\ast} \CO_{\bP_{W'}}\]
	in $\CO_{\bP_{\ul{V},n-1}}$.
	Therefore it suffices to show the following Lemma:
\end{proof}

\begin{lemma}\label{Lemma - Special case B points equivalence}
	Fix $\{0\} \neq W' \subset W \subset V$. Then \eqref{Eq - Inverse image ideal sheaf generators} vanishes if and only if there exists a morphism $\psi_{W}^{W'} \colon \CL_{W'} \lra \CL_{W}$ of $\CO_T$-modules such that the following diagram is commutative:
	\[\begin{tikzcd}
		W \otimes_k \CO_T \arrow[r, two heads, "l_{W}"] 		& \CL_{W} \arrow[d,leftarrow, "\psi_{W'}^{W}"] \\
		W' \otimes_k \CO_T \arrow[r, two heads, "l_{W'}"] \arrow[u, hook]	& \CL_{W'}  
	\end{tikzcd}\]
\end{lemma}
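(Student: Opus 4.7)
The plan is to translate both conditions into concrete statements about sections of $\CL_W \otimes \CL_{W'}$, and then establish the equivalence constructively via a local gluing argument. First, I would unpack what vanishing of $h^{-1}(v \otimes v' - v' \otimes v \mid v,v' \in W')^{\sim} \cdot \CO_T$ actually says. Under the identifications $f_W^{\ast}\CO_{\bP_W}(1) \cong \CL_W$ and $f_{W'}^{\ast}\CO_{\bP_{W'}}(1) \cong \CL_{W'}$ from the proof of Proposition~\ref{Prop - Points of B}, the section $v \otimes v' - v' \otimes v$ pulls back along $h$ to $l_W(v) \otimes l_{W'}(v') - l_W(v') \otimes l_{W'}(v) \in \Gamma(T, \CL_W \otimes \CL_{W'})$. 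Thus the ideal-sheaf condition is equivalent to the family of symmetry relations
\[ l_W(v) \otimes l_{W'}(v') = l_W(v') \otimes l_{W'}(v) \text{ in } \CL_W \otimes \CL_{W'}, \quad \forall v, v' \in W'. \]

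The easy direction ($\Leftarrow$) is then a local calculation. Given $\psi_{W'}^{W}$ with $l_W(v) = \psi_{W'}^{W}(l_{W'}(v))$ for $v \in W'$, I would trivialize $\CL_{W'}$ locally by a generator $e$ and write $l_{W'}(v) = \alpha e$, $l_{W'}(v') = \alpha' e$; both sides of the symmetry relation then reduce to $\alpha \alpha' \bigl( \psi_{W'}^{W}(e) \otimes e \bigr)$, so the relation holds.

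For the harder direction ($\Rightarrow$), I would construct $\psi_{W'}^{W}$ by gluing local definitions. Since $\{ l_{W'}(v) : v \in W' \setminus \{0\} \}$ generates $\CL_{W'}$, the open loci $U_v \defeq \{ t \in T \mid l_{W'}(v)_t \neq 0 \}$ cover $T$, and on each $U_v$ the section $l_{W'}(v)$ is a generator of $\CL_{W'}|_{U_v}$. On $U_v$ I define $\psi_v \colon \CL_{W'}|_{U_v} \to \CL_W|_{U_v}$ as the unique $\CO_T$-linear map sending $l_{W'}(v) \mapsto l_W(v)$. The key step — where the symmetry hypothesis enters — is the compatibility on overlaps: writing $l_{W'}(v') = f \cdot l_{W'}(v)$ with $f \in \CO_T^{\times}(U_v \cap U_{v'})$, the symmetry relation reads $l_W(v') \otimes l_{W'}(v) = l_W(v) \otimes l_{W'}(v') = f \cdot l_W(v) \otimes l_{W'}(v)$, and since tensoring with the local generator $l_{W'}(v)$ of the invertible sheaf $\CL_{W'}$ is an isomorphism on sections, we conclude $l_W(v') = f \cdot l_W(v)$. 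Hence $\psi_v(l_{W'}(v')) = f \cdot l_W(v) = l_W(v') = \psi_{v'}(l_{W'}(v'))$, and the $\psi_v$ glue to a global $\psi \defeq \psi_{W'}^{W}$. The required commutativity $\psi \circ l_{W'} = l_W|_{W' \otimes_k \CO_T}$ is then verified by the same cancellation argument applied to $l_{W'}(u) = g \cdot l_{W'}(v)$ for arbitrary $u \in W'$ on $U_v$.

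The main delicate point is the cancellation step: turning the symmetry relation in $\CL_W \otimes \CL_{W'}$ into a scalar equality on sections of $\CL_W$ requires articulating precisely that tensoring by a local generator of the invertible sheaf $\CL_{W'}$ is injective. Once this is in place the gluing is routine, and uniqueness of $\psi_{W'}^{W}$ is automatic from the surjectivity of $l_{W'}$.
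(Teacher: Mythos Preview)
Your proof is correct and follows essentially the same strategy as the paper's: define $\psi$ locally by sending the generating section $l_{W'}(v)$ to $l_W(v)$, and use the symmetry relations to verify that these local definitions glue. Your reformulation of the ideal-sheaf condition as the vanishing of the global sections $l_W(v)\otimes l_{W'}(v') - l_W(v')\otimes l_{W'}(v)\in\Gamma(T,\CL_W\otimes\CL_{W'})$ is a mild repackaging that lets you carry out the cancellation on overlaps using only a trivialization of $\CL_{W'}$; the paper instead works directly with the local generators of the ideal sheaf and therefore passes to a finer cover $U_w'\cap U_{w'}'\cap U_v$ trivializing both line bundles before performing the same computation. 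This is a cosmetic difference, not a genuinely different argument.
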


\begin{proof}[Proof of Lemma \ref{Lemma - Special case B points equivalence}]
	It is enough to consider the case of $\{0\} \neq V'\subset V$ with
	\begin{equation}\label{Eq - Diagram for psi in special case}
		\centering
		\begin{tikzcd}
			V \otimes \CO_T \arrow[r, two heads, "l"] 		& \CL \arrow[d,leftarrow, "\psi"] \\
			V' \otimes \CO_T \arrow[r, two heads, "l'"] \arrow[u, hook]	& \CL'
		\end{tikzcd}
	\end{equation}
	corresponding to $f \colon T \lra \bP_{V}$ and $f' \colon T \lra \bP_{V'}$. The general case proceeds analogously.
	
	Let, for $w\in V$,
	\[ U_w \defeq \left\{ P \in T \middle| l(w)_P \notin \Fm_P  \CL_P \right\} , \]
	so that $\CL$ is generated by $l(w)$ on $U_w$; and let, for $w' \in V'$,
	\[ U_{w'}' \defeq \left\{ P \in T \middle| l'(w')_P \notin \Fm_P  \CL'_P \right\} \]
	so that $\CL'$ is generated by $l'(w')$ on $U_{w'}'$.
	Then the sets $U_w\cap U_{w'}'$ form a covering of $T$ if $w$ and $w'$ range over all pairs $(w,w') \in V \times V'$.
	Furthermore, on $U_w\cap U_{w'}'$ the inverse image ideal sheaf \eqref{Eq - Inverse image ideal sheaf generators} is generated by the elements
	\begin{align*}
		h^{\ast}\left( \frac{v}{w}\otimes \frac{v'}{w'} - \frac{v'}{w}\otimes \frac{v}{w'} \right) &=
				f^{\ast}\bigg( \frac{v}{w} \bigg) f^{\prime \ast} \bigg( \frac{v'}{w'} \bigg) - f^{\ast}\bigg( \frac{v'}{w} \bigg) f^{\prime \ast} \bigg( \frac{v}{w'} \bigg)	\\
			&= \frac{l(v)}{l(w)}\frac{l'(v')}{l'(w')} - \frac{l(v')}{l(w)}\frac{l'(v)}{l'(w')} ,
	\end{align*}
	for all $v,v' \in V'$.
	
	First, assume that in \eqref{Eq - Diagram for psi in special case} such $\psi$ exists.
	For arbitrary $v, v' \in V'$, let
	\begin{alignat*}{4}
		l(v)  &= 	 a l(w)  ,&&	\qquad			  l(v') \, &=& \, b l(w) ,						\\
		l'(v) &=  a' l'(w')  ,&&	\qquad			  l'(v')\, &=& \, b' l'(w')			
	\end{alignat*}
	with $a,a', b,b' \in \CO_T\res{U_w\cap U_{w'}'}$. By the commutativity of \eqref{Eq - Diagram for psi in special case} we have
	\begin{alignat*}{3}
		l(v)  &=	 \psi \left(  l'(v) \right)				&&=&	\,	a' \psi\left(l'(w')\right)	&,		 \\
		l(v') &= 	 \psi \left(  l'(v') \right)			&&=& \,	b' \psi\left(l'(w')\right) 	&.
	\end{alignat*}
	Hence we compute that
	\begin{align*}
		h^{\ast}\left( \frac{v}{w}\otimes \frac{v'}{w'} - \frac{v'}{w}\otimes \frac{v}{w'} \right) &=
			 \frac{l(v)}{l(w)} \frac{l'(v')}{l'(w')} - \frac{l(v')}{l(w)} \frac{l'(v)}{l'(w')} 				\\
			&= \frac{a' \psi\left(l'(w')\right) }{l(w)} \frac{b' l'(w')}{l'(w')} - \frac{b' \psi\left(l'(w')\right) }{l(w)} \frac{a' l'(w')}{l'(w')} \\
			&= 0 .
	\end{align*}
	
	Conversely, suppose now that $h^{-1} \CJ_{n-1} \cdot \CO_T = 0$. For $w\in V'$, we want to define
	\[\psi\res{U_{w}'} \left( l'(w)\right) \defeq l(w)\]
	and have to check that this glues to give a map $\psi$.
	Let $w' \in V'$ and consider the intersection $U_{w}' \cap U_{w'}'$. There we have $l'(w')= a l'(w)$ for some $a \in \CO_T\res{U_{w}'\cap U_{w'}'}$ and we must show that $l(w') = a l(w)$.
	It suffices to do this on the open subset $U_{w}' \cap U_{w'}' \cap U_{v}$, for arbitrary $v \in V$, as $U_{w}' \cap U_{w'}'$ is covered by these subsets. On $U_{w}' \cap U_{w'}' \cap U_{v}$ we have
	\begin{align*}
		l(w) = b l(v) ,& \qquad  l(w') = b' l(v),
	\end{align*}
	for some $b,b' \in \CO_T\res{U_{w}' \cap U_{w'}' \cap U_{v}}$.
	Then by the vanishing of $h^{-1} \CJ_{n-1} \cdot \CO_T$ it follows that
	\begin{align*}
		b' = \frac{l(w')}{l(v)} \frac{l'(w)}{l'(w)}= \frac{l(w)}{l(v)} \frac{l'(w')}{l'(w)} = b a ,
	\end{align*}
	and hence
	\[ l(w')= b' l(v)= b a l(v) = a l(w) . \]
\end{proof}

\begin{remarks}
	\begin{altitemize}
		\item[(i)]The morphism $\pi \colon \bB_V \lra \bP_V$ is given on $T$-valued points by
			\begin{align*}
				\bB_V (T) &\lra \bP_V (T) \\
				(\CL,l,\psi) &\longmapsto (\CL_V, l_V) 
			\end{align*}
			which follows from the compatibility of $\pi_i \colon \wt{\bP}_{i} \lra \wt{\bP}_{i-1}$ with the projection $\RX_i \lra \RX_{i-1}$ as seen in Proposition \ref{Prop - B Incidence variety X_i isomorphism}.
			Moreover, $\Omega_V \subset \bB_V$ represents the open subfunctor which associates to a $k$-scheme $T$ the set of all isomorphism classes $\left( \CL, l, \psi \right)$ such that $\psi_{W'}^{W}$ is an isomorphism of invertible sheaves, for all $\{0\} \neq W' \subset W \subset V$.
		\item[(ii)]In \cite[Thm.\ ~10.17]{PS14}, Pink and Schieder construct the following morphism $\bB_V \lra \bQ_V$ on the level of functors:
			For $\left( \CL, l, \psi \right) \in \bB_V (T)$, the homomorphisms $\psi_{W'}^{W} \colon \CL_{W'} \lra \CL_{W}$ give homomorphisms $\CL_{W}^{\vee} \lra \CL_{W'}^{\vee}$ of the duals.
			This makes $(\CL_{W}^{\vee})_{W\subset V}$ a direct system with direct limit 
			\[ \CF \defeq \varinjlim_{W \subset V} \CL_{W}^{\vee} \]
			which is an invertible sheaf.
			For $v \in V\setminus \{0\}$, consider the homomorphism
			\begin{alignat*}{2}
				\lambda_v \colon \CL_{k v} \overset{\substack{l_{kv} \\ \sim }}{\longleftarrow} k v &\otimes_k \CO_T \, &&\lra \CO_T			\\
																			v &\otimes a		&&\longmapsto a	.	 
			\end{alignat*}
			Then $\lambda_v \in \CL_{kv}^{\vee}$, and we define $r(v)$ as the image of $\lambda_v$ under the map $\Gamma(T, \CL_{kv}^{\vee}) \lra \Gamma (T, \CF)$. This yields a reciprocal map
			\[ r \colon V \setminus \{0\} \lra \Gamma (T, \CF) \]
			whose image are generating sections of $\CF$.
			On $\Omega_V$, this morphism agrees with $\rho \colon \bB_V \lra \bQ_V$, hence we find that the above is the description of $\rho $ on $T$-valued points.
	\end{altitemize}
\end{remarks}

\subsection{$\bar{k}$-valued points and stratification}

We proceed by relating the $\bar{k}$-valued points of $\bP_V$, $\bQ_V$, and $\bB_V$ with the stratifications of those varieties.

\begin{lemma}\label{Lemma - Points of P and stratification}
	Let $f \in \mathrm{\mathbf{P}}_V(\bar{k})$ correspond to $l \colon V_{\bar{k}}   \myrightarrowdbl{} \bar{k}$, and let $V' \subsetneq V$ be a subspace.
	Then $f \in \CP_{V/V'}(\bar{k})$ if and only if $\Ker( l ) \cap V = V'$.
\end{lemma}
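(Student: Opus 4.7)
The plan is to unwind both sides of the equivalence using the modular description of $\bP_V$ (Proposition \ref{Prop - Points of P}) together with the explicit defining ideal of the linear subspace $\bP_{V/V'} \subset \bP_V$.

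First I would record the intermediate statement that, for any subspace $U \subsetneq V$, the $\bar k$-point $f$ lies in $\bP_{V/U}(\bar k)$ if and only if $U \subset \Ker(l) \cap V$. This is immediate from the fact that $\bP_{V/U} \hookrightarrow \bP_V$ is cut out by the homogeneous ideal $(v \mid v \in U) \subset k[v \mid v \in V]$: the point $f$ corresponds to the graded ring map sending $v \mapsto l(v \otimes 1)$, so it factors through $\bP_{V/U}$ precisely when $l(v \otimes 1) = 0$ for every $v \in U$.

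Next I would translate the definition of the stratum. By construction,
\[
	\CP_{V/V'}(\bar k) = \bP_{V/V'}(\bar k) \setminus \; \bigcup_{V' \subsetneq W \subsetneq V} \bP_{V/W}(\bar k),
\]
so by the previous step, $f \in \CP_{V/V'}(\bar k)$ iff $V' \subset \Ker(l) \cap V$ and, for every $V' \subsetneq W \subsetneq V$, $W \not\subset \Ker(l) \cap V$.

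Finally, I would set $W_0 \defeq \Ker(l) \cap V$ and check the two implications. Note that $W_0 \subsetneq V$: since $V \otimes_k \bar k = V_{\bar k}$ and $l$ is surjective onto $\bar k$, the restriction $l|_V$ is nonzero, so $W_0 \neq V$. If $\Ker(l) \cap V = V'$, then certainly $V' \subset \Ker(l) \cap V$; and for any $V' \subsetneq W \subsetneq V$ we cannot have $W \subset V'$, so $W \not\subset \Ker(l) \cap V$, giving $f \in \CP_{V/V'}(\bar k)$. Conversely, if $f \in \CP_{V/V'}(\bar k)$, then $V' \subset W_0 \subsetneq V$; if the inclusion $V' \subsetneq W_0$ were strict, then $W = W_0$ would violate the exclusion condition. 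Hence $V' = W_0 = \Ker(l) \cap V$.

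The argument is entirely formal once the functorial description of closed immersions $\bP_{V/U} \hookrightarrow \bP_V$ is in hand; I do not anticipate any obstacle, and the only subtlety worth flagging is the observation $\Ker(l) \cap V \subsetneq V$, which uses that $V$ generates $V_{\bar k}$ so that surjectivity of $l$ forces $l|_V \neq 0$.
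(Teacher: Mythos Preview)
Your proposal is correct and follows essentially the same approach as the paper: both first establish that $f \in \bP_{V/U}(\bar k)$ iff $U \subset \Ker(l) \cap V$ via the defining ideal $(v \mid v \in U)$, then unwind the definition of the stratum to conclude $\Ker(l) \cap V = V'$. Your version is in fact slightly more careful in explicitly verifying $\Ker(l) \cap V \subsetneq V$, a point the paper leaves implicit.
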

\begin{proof}
	For any $W \subsetneq V$, we have $f \in \bP_{V/W}( \bar{k}) $ if and only if the inverse image ideal sheaf $f^{-1} \left( v \middle| v \in W \right)^{\sim} \cdot \bar{k}$ vanishes.
	Now
	\[f^{-1} \left( v \middle| v \in W \right)^{\sim} \cdot \bar{k} = \left( l(v ) \middle| v\in W \right), \]
	hence this is equivalent to $W \subset \Ker (l) \cap V$.
	
	Therefore $f \in \CP_{V/V'}(\bar{k})$ if and only if $V' \subset \Ker (l) \cap V$ and $W \nsubset \Ker (l) \cap V$, for all $W \supsetneq V'$.
	But this is equivalent to $V' = \Ker (l) \cap V$.
\end{proof}

\begin{lemma}\label{Lemma - Points of Q and stratification}
	Let $f \in \mathrm{\mathbf{Q}}_V (\bar{k})$ correspond to $r \colon V\setminus \{0\} \lra \bar{k}$, and let $\{0\}\neq V' \subset V$ be a subspace.
	Then $f \in \CQ_{V'}(\bar{k})$ if and only if 
	\[ r(v) = 0 \Leftrightarrow v \in V \setminus V' \, \text{, for all $v\in V\setminus \{0\}$.} \]
\end{lemma}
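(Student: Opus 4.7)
The strategy is to adapt the argument of Lemma \ref{Lemma - Points of P and stratification} to the modular description of $\bQ_V$ from Proposition \ref{Prop - Points of Q}. First I would establish the analogue of the first step there: the $\bar{k}$-point $f$ factors through the closed immersion $\bQ_{V'} \hookrightarrow \bQ_V$ if and only if $r(v) = 0$ for every $v \in V \setminus V'$. Indeed, this closed immersion is cut out by the homogeneous ideal $\bigl( \tfrac{1}{v} \bigm| v \in V \setminus V' \bigr) \subset R_V$, and under the modular interpretation the universal section $\tfrac{1}{v}$ pulls back along $f$ to $r(v) \in \Gamma(\Spec \bar{k}, \CL) = \bar{k}$.

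The key step will be to show that the set
\[ W_r \defeq \{0\} \cup \bigl\{ v \in V \setminus \{0\} \bigm| r(v) \neq 0 \bigr\} \]
is a $k$-linear subspace of $V$. Closure under $k^{\times}$-scaling is immediate from axiom (i) of Definition \ref{Def - Reciprocal map}, which also gives $-v \in W_r$ whenever $v \in W_r$. For additive closure in the nontrivial case $v, v' \in W_r \setminus \{0\}$ with $v + v' \neq 0$, I would invoke the relation
\[ r(v)\, r(v') = r(v + v')\bigl( r(v) + r(v') \bigr) \]
from axiom (ii): since the left-hand side is nonzero, both factors on the right are nonzero, so in particular $v + v' \in W_r$. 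I expect this little lemma about $W_r$ to be the main substantive input of the proof.

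Combining these two observations with the definition of $\CQ_{V'}$, the condition $f \in \CQ_{V'}(\bar{k})$ translates into $W_r \subseteq V'$ together with $W_r \not\subseteq W$ for every $\{0\} \neq W \subsetneq V'$. Because the image of $r$ generates the invertible sheaf $\CL \cong \bar{k}$, we have $W_r \neq \{0\}$, so these conditions are together equivalent to the equality $W_r = V'$. Unwinding the definition of $W_r$ yields precisely the biconditional $r(v) = 0 \Leftrightarrow v \in V \setminus V'$ asserted by the lemma.
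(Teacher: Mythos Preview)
Your proof is correct and follows essentially the same approach as the paper: both identify the ideal-theoretic condition for $f \in \bQ_W(\bar{k})$ via the inverse image of $\bigl(\tfrac{1}{v}\bigm| v\in V\setminus W\bigr)^{\sim}$, and then unwind the definition of the stratum $\CQ_{V'}$. Your explicit verification that $W_r$ is a $k$-subspace via the reciprocal axioms makes precise a step the paper compresses into its final ``This in turn is equivalent to\ldots'' sentence.
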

\begin{proof}
	For $\{0\}\neq W \subset V$, we have $f \in \bQ_{W}(\bar{k})$ if and only if the inverse image ideal sheaf $f^{-1} \left( \frac{1}{v} \middle| v \in V\setminus W \right)^{\sim} \cdot \bar{k}$ vanishes.
	Then
	\[ f^{-1} \left( \tfrac{1}{v} \middle| v \in V\setminus W \right)^{\sim} \cdot \bar{k} = \left( r(v) \middle| v \in V\setminus W \right) , \]
	therefore this is equivalent to $r(v)=0$ if $v \in V\setminus W$.
	
	Then $f \in \CQ_{V'}(\bar{k})$ if and only if $r(v) =0 $ for $v \in V\setminus V'$ and, for every $\{0\}\neq W \subsetneq V'$, there is some $v \in V \setminus W$ such that $r(v) \neq 0$.
	This in turn is equivalent to 
	\[ r(v) = 0 \Leftrightarrow v \in V \setminus V' \, \text{, for all $v \in V\setminus \{0\}$.} \]
\end{proof}

\begin{remark}\label{Rem - About Stratification of B}
	Let $\left( l_{W} \colon W_{\bar{k}} \myrightarrowdbl{} \bar{k}\right)_{W\subset V} \in \bB_{V}(\bar{k})$, and let $ \{0\} \neq W' \subset W \subset V$ be subspaces. Consider the commutative diagram:
	\[\begin{tikzcd}
		W_{\bar{k}} \arrow[r, two heads, "l_{W}"] 		& \bar{k} \arrow[d,leftarrow, "\psi_{W'}^{W}"] \\
		W'_{\bar{k}} \arrow[r, two heads, "l_{W'}"] \arrow[u, hook]	& \bar{k}\mathclap{\,\, .}
	\end{tikzcd}\]
	\begin{altenumerate}
		\item[(i) ] Then $\Ker (l_{W'}) \subset \Ker(l_W)$, and hence
		\[\Ker (l_{W'} ) \cap W' \subset \Ker ( l_{W}) \cap W. \]
		\item[(ii) ] As $\psi_{W'}^{W}$ is the multiplication with an element of $\bar{k}$ we have either $\psi_{W'}^{W} = 0$ or $\psi_{W'}^{W}$ is an isomorphism.
		Therefore, we have the two cases: either
		\[\Ker( l_{W} ) \cap W = W' , \]
		or there exists a representative of $\left( l_{W} \right)_{W\subset V}$ such that
		\[l_{W'} = l_{W}\res{W'_{\bar{k}}} .\]
	\end{altenumerate}
\end{remark}

\begin{lemma}\label{Lemma - Points of B and stratification}
	Let $ h \in \mathrm{\mathbf{B}}_V(\bar{k})$ be a $\bar{k}$-valued point corresponding to $( l_{W} \colon W_{\bar{k}} \myrightarrowdbl{} \bar{k} )_{W \subset V}$, and let $\CF= \left(\{0\} \subsetneq  V_{r} \subset\ldots\subset V_{0} \subsetneq V \right)$ be a flag.
	Then $h \in \CB_{\CF}(\bar{k})$ if and only if
	\[ \Ker ( l_{V_{i}} ) \cap V_{i} = V_{i+1} \, \text{, for all $i= -1,\ldots, r$,} \]
	where we write $V_{-1} = V$ and $V_{r+1} = \{0\}$.
\end{lemma}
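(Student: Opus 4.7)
The plan is to associate to $h$ a canonical flag $\CF_h$ of $V$ built from the iterated $k$-rational kernels, to verify that $h \in \CB_{\CF_h}(\bar{k})$, and to conclude using the fact that the strata $\CB_{\CF}$ partition $\bB_V(\bar{k})$. The asserted equalities then just unpack the identity $\CF = \CF_h$. For the translation step, I first apply Proposition~\ref{Prop - B Incidence variety X_i isomorphism} with $i = n-1$: the exceptional divisor $D_{V'} \subset \bB_V$ is cut out in $\RX_{n-1}$ by the requirement that, for every $W \supsetneq V'$, the projection to $\bP_W$ lies in $\bP_{W/V'}$. The ideal-sheaf argument in the proof of Lemma~\ref{Lemma - Points of P and stratification}, applied inside each $\bP_W$, yields
\[ h \in D_{V'}(\bar{k}) \;\Longleftrightarrow\; V' \subset \Ker(l_W) \text{ for all } W \supsetneq V'. \]
Writing $K_W \defeq \Ker(l_W) \cap W$ and using the defining square for $B_V$ together with the surjectivity of $l_{V'}$, this is equivalent to $\psi_{V'}^W = 0$; Remark~\ref{Rem - About Stratification of B} then supplies the dichotomy that each $\psi_{V'}^W$ is either zero (equivalently $V' \subset K_W$) or an isomorphism, in which case $K_W \cap V' = K_{V'}$.

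I next define the canonical chain recursively by $V_0^h \defeq K_V$ and $V_{i+1}^h \defeq K_{V_i^h}$, terminating at $V_{r+1}^h = \{0\}$. All inclusions are strict because $l_{V_i^h}$ is surjective, so its restriction to the $k$-generating set $V_i^h \subset (V_i^h)_{\bar{k}}$ cannot vanish identically. Set $\CF_h \defeq (\{0\} \subsetneq V_r^h \subsetneq \ldots \subsetneq V_0^h \subsetneq V)$. The heart of the proof is to show $h \in \CB_{\CF_h}(\bar{k})$, which splits into (a) $h \in D_{V_j^h}$ for all $j = 0, \ldots, r$, and (b) $h \notin D_{V'}$ for every proper nonzero $V' \notin \CF_h$.

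For (a), given $W \supsetneq V_j^h$, let $i$ be the largest index with $W \subset V_i^h$; then $V_j^h \subsetneq W \subset V_i^h$ forces $i < j$, whence $V_j^h \subset V_{i+1}^h$. If $W = V_i^h$, directly $V_j^h \subset V_{i+1}^h = K_{V_i^h} \subset \Ker(l_{V_i^h})$. Otherwise $W \subsetneq V_i^h$, and by maximality of $i$ we have $W \not\subset V_{i+1}^h = K_{V_i^h}$, so the dichotomy forces $\psi_W^{V_i^h}$ to be an isomorphism and $K_W = K_{V_i^h} \cap W = V_{i+1}^h \cap W \supset V_j^h$. For (b), take $i$ maximal with $V' \subset V_i^h$; since $V' \notin \CF_h$, one has $V' \subsetneq V_i^h$, and maximality yields $V' \not\subset V_{i+1}^h = K_{V_i^h}$, so $W = V_i^h$ witnesses $V' \not\subset K_W$.

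Because $D_{V'}$ and $D_{V''}$ intersect only when $V'$ and $V''$ are comparable, the collection $\{\CB_{\CF}\}$ partitions $\bB_V(\bar{k})$; hence $h$ lies in exactly one stratum, namely $\CB_{\CF_h}$. Thus $h \in \CB_{\CF}(\bar{k})$ iff $\CF = \CF_h$, and unpacking this via the recursive definition of $\CF_h$ gives exactly $\Ker(l_{V_i}) \cap V_i = V_{i+1}$ for $i = -1, \ldots, r$. The main obstacle is the subcase $W \subsetneq V_i^h$ in (a): the clean kernel formula $K_W = K_{V_i^h} \cap W$ is only available because the dichotomy from Remark~\ref{Rem - About Stratification of B} rules out the zero case of $\psi_W^{V_i^h}$ once $W \not\subset K_{V_i^h}$.
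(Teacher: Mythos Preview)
Your proof is correct and follows a slightly different organizational route from the paper. Both arguments begin with the same translation step---characterizing $h \in D_{V'}(\bar{k})$ via $V' \subset \Ker(l_W)$ for all $W \supsetneq V'$---and both rely on the dichotomy of Remark~\ref{Rem - About Stratification of B} to control $K_W$ in the crucial subcase. The paper then proves each direction of the biconditional separately: the forward direction is essentially your steps (a) and (b), while the backward direction assumes $h \in \CB_{\CF}$, sets $V_{i+1}' \defeq K_{V_i}$, and derives a contradiction from $V_{i+1} \subsetneq V_{i+1}'$ by exhibiting $h \in D_{V_{i+1}'}$. Your approach instead packages the forward direction as the statement $h \in \CB_{\CF_h}$ for the canonically defined flag $\CF_h$, and then replaces the paper's backward contradiction argument by invoking that the $\CB_{\CF}$ partition $\bB_V(\bar{k})$ (a consequence of the fact that $D_{V'} \cap D_{V''} \neq \emptyset$ forces comparability). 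This is marginally cleaner: once you know the strata are disjoint, uniqueness of the stratum containing $h$ immediately gives $\CF = \CF_h$, which unpacks to the kernel identities. Your handling of the subcase $W \subsetneq V_i^h$ in (a) is also slightly more uniform than the paper's, which always applies the dichotomy to $W \subset V$ and then treats the boundary case $W = V_0$ separately via Remark~\ref{Rem - About Stratification of B}(i).
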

\begin{proof}
	First, we show that $h$ is a $\bar{k}$-valued point of $D_{V'} \subset \bB_V$ if and only if for all subspaces $V' \subsetneq W \subset V$ we have $V' \subset \Ker( l_{W} ) \cap W$. Then we proceed similarly as in the proofs for $\bP_V$ and $\bQ_V$.

	By Proposition \ref{Prop - B Incidence variety X_i isomorphism},
	\[ b_{n-1}\left( D_{V'} \right) = \RX_{n-1} \cap  \Bigg(  \;\;\;\;  \prod\limits_{\mathclap{\substack{V' \subsetneq W \subset V}}} \; \bP_{W/V'} \times \bP_{V'} \times \;\; \prod\limits_{\mathclap{\substack{ \{0\} \neq W \subset V, \\ V' \nsubset W }}} \; \bP_{W} \Bigg) .\]
	Hence $h \in D_{V'}(\bar{k} )$ if and only if the inverse image ideal sheaf
	\[h^{-1} \Bigg( \;\;\;\; \sum\limits_{\mathclap{\substack{V'\subsetneq W \subset V}}} \;  \left( v \middle| v\in V' \right)^{\sim} \Bigg) \cdot \bar{k} \]
	vanishes (where for $V'\subsetneq W \subset V$, we extend $\left( v \middle| v\in V' \right)^{\sim} \subset \CO_{\bP_{W}}$ to $\CO_{\bP_{\ul{V},n-1}}$).
	But for each $V'\subsetneq W \subset V$ we find that
	\begin{align*}
		h^{-1}\left( \left( v \middle| v\in V'  \right)^{\sim} \right)\cdot \bar{k}
			&= f_{W}^{-1} \left( \left( v \middle| v\in V'  \right)^{\sim} \right)\cdot \bar{k}		\\
			&= \left( l_{W} ( v ) \middle| v \in V' \right) .
	\end{align*}
	This shows the first part.

	By the definition of $\CB_{\CF}$, $h\in \CB_{\CF}( \bar{k})$ if and only if the only subspaces $\{0\} \neq V' \subsetneq V$ with $h \in D_{V'}(\bar{k})$ are $V' =  V_{0},\ldots,V_{r}$.
	We show that this is equivalent to $\Ker( l_{V_{i}} ) \cap V_{i} = V_{i+1}$, for $i= -1,\ldots,r$:
	
	First we assume the latter.
	Let $i \in \{0,\ldots,r\}$ and consider for an arbitrary $V_{i}\subsetneq W \subset V$ the two cases of Remark \ref{Rem - About Stratification of B} (ii) applied to $W \subset V$.
	If $ l_W = l_V \res{W_{\bar{k}}}$, then
	\[V_{i}  \subset V_{0} \cap W  = \Ker( l_V ) \cap W = \Ker( l_{W}) \cap W .\]
	If $\Ker\left(l_V\right) \cap V = W$ it follows from $\Ker\left( l_V \right) \cap V = V_{0}$ that $W = V_{0} \supset V_{i}$.
	This cannot occur if $i= 0$.
	However, if $i > 0$, we deduce by Remark \ref{Rem - About Stratification of B} (i) that
	\[V_{i} = \Ker ( l_{V_{i-1}} ) \cap V_{i-1} \subset \Ker ( l_{V_{0}}) \cap V_{0} =  \Ker ( l_{W}) \cap W .\]
	In both cases, it follows by the first part that $h \in D_{V_{i}} ( \bar{k} )$.
	Furthermore, let $\{0\} \neq V' \subsetneq V$ such that $V' \notin \left\{V_{0},\ldots,V_{r}\right\}$.
	Let $i' \in \{-1,\ldots,r \}$ be maximal such that $V' \subset V_{i'}$.
	Note that this already implies $V' \subsetneq V_{i'}$.
	Then we have
	\[V' \nsubset V_{i'+1} = \Ker( l_{V_{i'}} ) \cap V_{i'} , \]
	hence $h\notin D_{V'} (\bar{k})$.
	
	Now assume that the only subspaces $\{0\} \neq V' \subsetneq V$ with $h \in D_{V'} (\bar{k})$ are $V' = V_{0},\ldots,V_{r}$.
	Let $i \in \{-1,\ldots,r\}$ and write $V_{i+1}' \defeq \Ker ( l_{V_{i}} ) \cap V_{i} \subsetneq V_{i}$.
	As $h \in D_{V_{i+1}} ( \bar{k})$, we surely have $V_{i+1} \subset V_{i+1}'$.
	Assume, by the way of contradiction, that $V_{i+1} \subsetneq V_{i+1}'$.
	Consider again, for arbitrary $V_{i+1}' \subsetneq W \subset V$, the two cases of Remark \ref{Rem - About Stratification of B} (ii) applied to $W \subset V$.
	If $ l_W = l_V \res{W_{\bar{k}}}$, then
	\[V_{i+1}' \subset \Ker ( l_{V_{i}}) \cap V_i \cap W \subset \Ker( l_{V}) \cap W = \Ker ( l_{W} ) \cap W .\]
	The case $\Ker (l_V ) \cap V = W$ cannot occur for $i=-1$.
	For $i > -1$, the claim for $i=-1$, i.e.\ $\Ker\left(l_V\right) \cap V = V_{0}$, implies that $W = V_0$. Hence, we conclude that
	\[V_{i+1}' = \Ker ( l_{V_{i}} ) \cap V_{i} \subset \Ker ( l_{V_{0}} ) \cap V_{0} = \Ker ( l_{W} ) \cap W . \]
	Like before, we have $ h \in D_{V_{i+1}'}( \bar{k} )$ in both cases.
	But this contradicts $V_{i+1}' \neq V_{0},\ldots,V_{r}$. Therefore, $\Ker ( l_{V_{i}} ) \cap V_{i} = V_{i+1}$.
\end{proof}

\section{$\PGL(V)$-action}\label{Sect - PGL(V)-action}

\subsection{Stabilizers and stratification}

We use the notation
\begin{align*}
	\GL(V)	&\defeq \mathbf{GL}(V) (k) ,	\\
	\PGL(V) &\defeq \mathbf{PGL}(V) (k) .
\end{align*}
Consider the canonical left $\GL(V)$-action on $k \! \left[ v \middle| v \in V \right] = \mathrm{Sym}\, V$:
\[ g\ldotp v = g(v) , \]
for all $g\in \GL(V), v\in V$. Each $g\in \GL(V)$ gives rise to an automorphism of $\bP_V$ by the rule
\[ \Fp\ldotp g  = g^{-1}(\Fp ) ,  \]
for all $g\in \GL(V)$, $ \Fp \in \bP_V$, so that we obtain a right $\PGL(V)$-action on $\bP_V$.
There is also the right $\GL(V)$-action on $ V_{\bar{k}}^{\ast}$ by the transpose
\[ g^{\ast}( l) ( v \otimes \lambda ) = (l \circ g )( v \otimes \lambda  ) = l ( g(v) \otimes \lambda )  , \]
for all $g\in \GL(V)$, $l \in  V_{\bar{k}}^{\ast}$ and $v \otimes \lambda \in V_{\bar{k}}$.
With the identification $\bP_V(\bar{k}) \cong \left( V_{\bar{k}}^{\ast} \setminus \{0\} \right) / \bar{k}\unts $ of Proposition \ref{Prop - Points of P} this gives the right $\PGL(V)$-action on $\bP_V(\bar{k})$ which corresponds to the right action on $\bP_V$ defined above.

For $\bQ_V$ consider the left $\GL(V)$-action on $k \! \left[ \frac{1}{v} \middle| v \in V\setminus \{0\} \right]$, given by
\[ g \ldotp \frac{1}{v} = \frac{1}{g(v)}, \]
for all $g\in \GL(V)$, $v \in V \setminus \{0\}$.
This induces a right $\PGL(V)$-action on $\bQ_V$ which on $\bar{k}$-valued points takes the form
\[ \left( r \ldotp g \right) ( v ) = (r \circ g)(v) = r( g(v) ) ,  \]
for all $g \in \PGL(V)$, $v\in V\setminus \{0\}$ and reciprocal maps $r \in \bQ_V (\bar{k})$.

Moreover, the induced right $\PGL(V)$-action on $\bB_V$, on $\bar{k}$-valued points, is given by
\[ \left( \left( l_{W} \colon W_{\bar{k}} \myrightarrowdbl{} \bar{k} \right)_{W \subset V} \right) \ldotp g = \left( l_{g(W)} \circ g\res{W_{\bar{k}}} \colon W_{\bar{k}} \myrightarrowdbl{} \bar{k} \right)_{W\subset V} , \]
for all $g \in \PGL(V)$ and $\left( l_{W} \right)_{W \subset V} \in \bB_V (\bar{k})$.
	
In subsection \ref{Subsect - Proof of Thm Stabilizers}, we will compute the stabilizers 
\[\mathrm{Stab}(x) = \left\{ g \in \PGL(V) \middle| x \ldotp g = x \right\}\subset \PGL(V) , \]
for $\bar{k}$-valued points $x$ of $\bP_V$, $\bQ_V$ and $\bB_V$ to arrive at the following theorem:

\begin{theorem}\label{Thm - Stabilizers}
	Let $g \in \PGL(V)$.
	\begin{altenumerate}
		\item[(i) ] Let $x \in \CP_{V/V'} (\bar{k}) \subset \mathrm{\mathbf{P}}_V (\bar{k})$.
			Then $g$ fixes $x$ if and only if $g$ is of the form
			\begin{align*}
				g = \left( \begin{array}{c|c}
						\ast & \ast \\
						\hline
						0	 & S
					\end{array} \right)
			\end{align*}
			with regard to $V = V' \oplus V''$ for some complementary space $V''$.
			Here $S \in \PGL(V'')$ is semi-simple, and specified by Proposition \ref{Prop - Fixpoints of Omega}.
			
		\item[(ii) ] Let $x \in \CQ_{V'} (\bar{k}) \subset \mathrm{\mathbf{Q}}_V (\bar{k})$.
			Then $g$ fixes $x$ if and only if $g$ is of the form
			\begin{align*}
				g = \left( \begin{array}{c|c}
						S & \ast \\
						\hline
						0	 & \ast
					\end{array} \right)
			\end{align*}
			with regard to $V = V' \oplus V''$ for some complementary space $V''$.
			Here $S \in \PGL(V')$ is semi-simple, and specified by Proposition \ref{Prop - Fixpoints of Omega}.
			
		\item[(iii) ] Let $x \in \CB_{\CF} (\bar{k}) \subset \mathrm{\mathbf{B}}_{V} (\bar{k})$, for a flag $\CF = \left( \{0\} \subsetneq V_r \subset\ldots\subset V_0 \subsetneq V \right)$.
		Then $g$ fixes $x$ if and only if $g$ is of the form
			\begin{align*}
				g = \left( \begin{array}{cccc}
						S_{r+1} &\multicolumn{1}{|c}{ } 	&						&  \ast					\\ \cline{1-2}	
							 	&\multicolumn{1}{|c}{S_{r}} &\multicolumn{1}{|c}{ } &  						\\ \cline{2-2}
								&							&\ddots					&  						\\ \cline{4-4}
						0		&							&						&\multicolumn{1}{|c}{S_0} 
					\end{array} \right)
			\end{align*}
			with regard to $V = V_{r} \oplus V_{r}' \oplus \ldots\oplus V_{0}'$ for some complementary spaces $V_{i-1} = V_{i} \oplus V_{i}'$, $i=0,\ldots,r$.
			The $S_{i}$ are again semi-simple, specified by Proposition \ref{Prop - Fixpoints of Omega}.
	\end{altenumerate}	
\end{theorem}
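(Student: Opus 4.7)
The plan is to exploit the modular descriptions of $\bar{k}$-valued points given by Lemmas \ref{Lemma - Points of P and stratification}, \ref{Lemma - Points of Q and stratification}, and \ref{Lemma - Points of B and stratification}, together with the explicit $\PGL(V)$-actions recalled just above. In each case the condition that $g \in \PGL(V)$ fixes a given $\bar{k}$-point translates into an equation of the form ``a certain surjection (resp.\ reciprocal map) is carried to a scalar multiple of itself''. From this equation one reads off that $g$ preserves the subspace indexing the stratum, which pins down the desired block shape, while the non-trivial content of the equation is pushed onto a single diagonal block that fixes a point of a smaller $\Omega$; Proposition \ref{Prop - Fixpoints of Omega} then classifies this block as semi-simple of the stated form, and the converse in each case is a direct verification.

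More precisely, for (i) the point $x \in \CP_{V/V'}(\bar{k})$ corresponds to $l \colon V_{\bar{k}} \twoheadrightarrow \bar{k}$ with $\Ker(l) \cap V = V'$; the equation $l \circ g = \lambda l$ (for some $\lambda \in \bar{k}^{\times}$) forces $g^{-1}(V') = V'$ by intersecting kernels with $V$, and choosing a $k$-complement $V = V' \oplus V''$ places $g$ in the claimed upper block-triangular form. Since $\Ker(l) \cap V'' = \{0\}$, the restriction $l|_{V''_{\bar{k}}}$ is a $\bar{k}$-point of $\Omega_{V''}$, and the equation reduces on $V''$ to $S \in \PGL(V'')$ fixing this point. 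Part (ii) is symmetric: $x \in \CQ_{V'}(\bar{k})$ is given by a reciprocal map $r$ whose non-vanishing locus is exactly $V' \setminus \{0\}$, so $r \circ g = \lambda r$ forces $g(V') = V'$ by comparing vanishing loci; the restriction $r|_{V' \setminus \{0\}}$ defines a $\bar{k}$-point of $\Omega_{V'}$ which must be fixed by the upper-left block $S \in \PGL(V')$, while on $V \setminus V'$ both sides of $r \circ g = \lambda r$ vanish automatically once $g$ preserves $V'$, so the other blocks are unconstrained.

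For (iii), I proceed by a stepwise reduction along the flag using the $\PGL(V)$-equivariance of $\pi \colon \bB_V \to \bP_V$ (on $T$-valued points $\pi$ sends $(\CL_W, l_W, \psi)_W$ to $(\CL_V, l_V)$). Given $x = (l_W)_W \in \CB_\CF(\bar{k})$ stabilized by $g$, part (i) applied to $\pi(x) \in \CP_{V/V_0}(\bar{k})$ yields $g(V_0) = V_0$; the stabilizer condition on $x$ then implies that $g|_{V_0}$ fixes $l_{V_0} \in \CP_{V_0/V_1}(\bar{k}) \subset \bP_{V_0}(\bar{k})$, so part (i) applied inside $V_0$ gives $g(V_1) = V_1$, and iterating shows $g$ preserves the entire flag $\CF$. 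Choosing complements $V_{i-1} = V_i \oplus V_i'$ produces the decomposition $V = V_r \oplus V_r' \oplus \ldots \oplus V_0'$ in which $g$ becomes block upper-triangular with diagonal blocks $S_{r+1}, S_r, \ldots, S_0$. By the stratum condition, each $l_{V_{i-1}}$ descends to a $\bar{k}$-point of $\Omega_{V_i'}$ on the quotient $V_{i-1}/V_i \cong V_i'$ that must be fixed by $S_i$, while $\Ker(l_{V_r}) \cap V_r = \{0\}$ makes $l_{V_r}$ itself a $\bar{k}$-point of $\Omega_{V_r}$ to be fixed by $S_{r+1}$, and Proposition \ref{Prop - Fixpoints of Omega} classifies every block. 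The main obstacle, beyond the bookkeeping, will be confirming that the compatibility with the morphisms $\psi^W_{W'}$ for pairs $W' \subset W$ \emph{outside} the flag imposes no new constraints on the strictly upper off-diagonal blocks; I expect this to follow from Remark \ref{Rem - About Stratification of B}, since for each such pair either $\psi^W_{W'}$ vanishes over $\bar{k}$ or $l_{W'} = l_W|_{W'_{\bar{k}}}$, and both alternatives are preserved under the action of any flag-preserving $g$ whose diagonal blocks already have the required fixing property.
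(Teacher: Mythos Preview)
Your proposal is correct and follows essentially the same route as the paper, which packages the reductions you describe as Lemmas \ref{Lemma - Fixpoint reduction P}, \ref{Lemma - Fixpoint reduction Q}, \ref{Lemma - Fixpoint reduction B} (with Lemma \ref{Lemma - Equivariant map k points Omega} to pass from the $\bQ$-side to the $\bP$-side in (ii)) before invoking Proposition \ref{Prop - Fixpoints of Omega}. One small clarification for the converse in (iii): the obstacle is not the compatibility of the $\psi^W_{W'}$ (that is automatic once the $l_W$'s match) but rather verifying $l_{g(W)} \circ g|_{W_{\bar{k}}} = \lambda_W l_W$ for \emph{every} subspace $W$, and your use of Remark \ref{Rem - About Stratification of B} to write $l_W = l_{V_{i'-1}}|_{W_{\bar{k}}}$ for the smallest flag member $V_{i'-1} \supset W$ is exactly what the paper does to reduce this to the already-established equations on the $V_i$.
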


Recall that the parabolic subgroups of $\mathbf{PGL}(V)$ are the stabilizers of flags of $V$, and denote by $P_{\CF}$ the parabolic subgroup such that $P_{\CF}(k)$ stabilizes the flag $\CF$.
We say that a parabolic subgroup $P_{\CF}$ is \textit{maximal} if its flag consists of at most a single subspace, i.e.\ $\CF= ( V')$ or $P_{\CF} = \mathbf{PGL}(V)$.
Let $\FP$ denote the set of all parabolic subgroups of $\mathbf{PGL}(V)$, and let $\FP_{\max}$ be the set of all maximal parabolic subgroups.

Furthermore, let $R_\Ru G$ denote the unipotent radical of an algebraic group $G$.
If $G$ is a connected and reductive algebraic group over $k$, like in the case of $\mathbf{PGL}(V)$, then a parabolic subgroup $P \subset G$ is already determined by the $k$-rational points of its unipotent radical:
Indeed, let $P$ and $P'$ be two parabolic subgroups of $G$.
Then we have $P=P'$ if and only if $P(k)=P'(k)$ \cite[Cor. 4.20]{BT65}.
Moreover, $P(k)$ is uniquely characterized as the normalizer of $R_\Ru P (k)$ in $G(k)$ \cite[Cor. 5.19]{BT65}.
We write
\[R_\Ru \mathrm{Stab}(x) \defeq R_\Ru \mathrm{Stab}_{alg}(x)(k) . \]

\begin{corollary}\label{Cor - Stabilizers and Stratification}
	\begin{altitemize}
		\item[(i)]
			There is a bijection
			\[\FP_{\max} \overset{1:1}{\longleftrightarrow} \left\{ \text{strata of $\mathrm{\mathbf{P}}_V$} \right\} \]
			which is given by 
			\[\CP_{V/V'}(\bar{k}) = \left\{ x \in \mathrm{\mathbf{P}}_V (\bar{k})\middle|R_\Ru\mathrm{Stab}(x) = R_\Ru P_{( V' )} (k) \right\} ,\]
			for each maximal parabolic subgroup $P_{( V' )}$ of $\mathbf{PGL}(V)$.
		\item[(ii)]
		There is a bijection
			\[\FP_{\max} \overset{1:1}{\longleftrightarrow} \left\{ \text{strata of $\mathrm{\mathbf{Q}}_V$} \right\} \]
			which is given by 
			\[\CQ_{V'}(\bar{k}) = \left\{ x \in \mathrm{\mathbf{Q}}_V (\bar{k})\middle|R_\Ru\mathrm{Stab}(x)=R_\Ru P_{( V' )} (k) \right\} ,\]
			for each maximal parabolic subgroup $P_{( V' )}$ of $\mathbf{PGL}(V)$.
		\item[(iii)]
			There is a bijection
			\[\FP \overset{1:1}{\longleftrightarrow} \left\{ \text{strata of $\mathrm{\mathbf{B}}_V$} \right\} \]
			which is given by 
			\[\CB_{\CF}(\bar{k}) = \left\{ x \in \mathrm{\mathbf{B}}_V (\bar{k})\middle|R_\Ru\mathrm{Stab}(x)=R_\Ru P_{\CF} (k) \right\} ,\]
			for each parabolic subgroup $P_{\CF}$ of $\mathbf{PGL}(V)$.
	\end{altitemize}
\end{corollary}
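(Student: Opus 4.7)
The plan is to deduce the three bijections directly from Theorem \ref{Thm - Stabilizers}, combined with the Borel--Tits result (recalled in the paper) that a parabolic subgroup $P$ of $\mathbf{PGL}(V)$ is uniquely determined by the $k$-rational points of its unipotent radical $R_\Ru P(k)$.

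The core computation I would perform is the following. Fix a $\bar{k}$-point $x$ in a stratum and invoke the corresponding part of Theorem \ref{Thm - Stabilizers} to exhibit $\mathrm{Stab}_{alg}(x)$ as the subgroup of $\mathbf{PGL}(V)$ whose elements, in a basis subordinate to the decomposition $V = V_r \oplus V_r' \oplus \ldots \oplus V_0'$ associated to the flag $\CF$ of the stratum, are block upper-triangular with diagonal blocks $S_i$ constrained to lie in a prescribed set of semi-simple elements and with arbitrary strictly upper-triangular parts. This exhibits an inclusion $\mathrm{Stab}_{alg}(x) \subset P_{\CF}$ and moreover a semidirect decomposition
\[ \mathrm{Stab}_{alg}(x) = R_\Ru P_{\CF} \rtimes M \]
in which $R_\Ru P_{\CF}$ arises as the full strictly block upper-triangular unipotent part (because the off-diagonal blocks are unconstrained) and $M$ is a subgroup of the block-diagonal Levi consisting only of semi-simple elements. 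From this decomposition one reads off $R_\Ru \mathrm{Stab}_{alg}(x) = R_\Ru P_{\CF}$, and taking $k$-points yields $R_\Ru \mathrm{Stab}(x) = R_\Ru P_{\CF}(k)$, as required.

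With this identification in hand, assembling the bijection is routine. By Lemmas \ref{Lemma - Points of P and stratification}, \ref{Lemma - Points of Q and stratification}, and \ref{Lemma - Points of B and stratification} the sets $\bP_V(\bar{k})$, $\bQ_V(\bar{k})$, and $\bB_V(\bar{k})$ are partitioned by their strata, and those strata are indexed respectively by proper subspaces $V' \subsetneq V$, by nonzero subspaces $\{0\} \neq V' \subset V$, and by flags of $V$. These index sets match up with $\FP_{\max}$ (with $\mathbf{PGL}(V)$ itself corresponding to $V' = \{0\}$ for $\bP_V$, to $V' = V$ for $\bQ_V$, and to the trivial flag for $\bB_V$) and with $\FP$, respectively. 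By the previous step, the assignment of a stratum to its parabolic via the unipotent radical is well-defined; it is injective by the Borel--Tits characterisation $P_{\CF}(k) = N_{\mathbf{PGL}(V)(k)}(R_\Ru P_{\CF}(k))$ cited in the paper, so that $R_\Ru P_{\CF}(k) = R_\Ru P_{\CF'}(k)$ forces $P_{\CF} = P_{\CF'}$ and hence $\CF = \CF'$.

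The main obstacle is justifying the reductivity of the Levi component $M$, i.e.\ verifying that the set of admissible diagonal tuples $(S_0, \ldots, S_{r+1})$ is an algebraic subgroup of the Levi of $P_{\CF}$ containing no non-trivial unipotent element. Theorem \ref{Thm - Stabilizers} only records that each $S_i$ is semi-simple. The verification should proceed by unpacking Proposition \ref{Prop - Fixpoints of Omega}, which realises each $S_i$ as the stabiliser of a $\bar{k}$-point of the Drinfeld half space $\Omega_{V_{i-1}/V_{i}}$; such a stabiliser is diagonalisable over $\bar{k}$ (its elements act by a single scalar in an appropriate basis), and in particular contains no non-trivial unipotent element. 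Once this reductivity is established, the semidirect decomposition and the identification $R_\Ru \mathrm{Stab}_{alg}(x) = R_\Ru P_{\CF}$ follow from the standard structure theory, and the corollary drops out.
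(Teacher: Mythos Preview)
The paper gives no explicit proof of this corollary; it is left as an immediate consequence of Theorem \ref{Thm - Stabilizers} and the Borel--Tits facts recalled just before the statement. Your proposal supplies exactly the kind of argument the paper has in mind: read off from Theorem \ref{Thm - Stabilizers} that $R_\Ru P_{\CF}(k) \subset \mathrm{Stab}(x) \subset P_{\CF}(k)$, check that the image in the Levi is reductive so that $R_\Ru \mathrm{Stab}_{alg}(x) = R_\Ru P_{\CF}$, and then invoke Borel--Tits for injectivity. This is correct for part (iii), and the overall strategy is sound.

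There is, however, a small but genuine slip in your treatment of parts (i) and (ii). You write that all diagonal blocks $S_i$ are ``constrained to lie in a prescribed set of semi-simple elements.'' That is the shape of Theorem \ref{Thm - Stabilizers}(iii), but in (i) and (ii) one of the two diagonal blocks is an unconstrained $\ast$: for $x \in \CP_{V/V'}(\bar{k})$ the stabilizer is
\[
\begin{pmatrix} \GL(V') & \ast \\ 0 & S \end{pmatrix},
\]
so the Levi part $M$ is (the image in $\PGL$ of) $\GL(V') \times \{S\}$, and likewise dually for $\bQ_V$. In particular $M$ contains plenty of non-semi-simple elements coming from unipotents in $\GL(V')$, so your stated reason ``$M$ consists only of semi-simple elements'' fails here. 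The conclusion survives because $\GL(V')$ is already reductive, hence $M$ is reductive, and then $R_\Ru(\mathrm{Stab}_{alg}(x)) = R_\Ru P_{(V')}$ follows from the same semidirect-product argument you give. So the fix is local: in (i) and (ii) replace ``each diagonal block is semi-simple'' by ``one diagonal block is a full $\GL$ (reductive) and the other is the stabilizer from Proposition \ref{Prop - Fixpoints of Omega} (a group of semi-simple elements, hence with trivial unipotent radical).'' With that correction your argument goes through.
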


\subsection{Proof of Theorem \ref{Thm - Stabilizers}}\label{Subsect - Proof of Thm Stabilizers}

First, note that $g\in \PGL(V)$ fixes $l \in \bP_V(\bar{k})$ if and only if for some representatives $g' \in \GL(V)$ and $l' \colon V_{\bar{k}} \myrightarrowdbl{} \bar{k}$ of $g$, respectively $l$, there exists $\lambda \in \bar{k}\unts$ such that $l'$ is an eigenvector of $g^{\prime \ast}$ in $V_{\bar{k}}^{\ast}$ for the eigenvalue $\lambda$, i.e.\
\[ g^{\prime \ast}(l') = \lambda l' .\]
This is equivalent to
\[ g'(v) - \lambda v \in \Ker(l') \, \text{, for all $v \in V_{\bar{k}}$.} \]
If $r \in \bQ_V (\bar{k})$, the condition for $g$ to fix $r$ in terms of a representative $r' \colon V \setminus \{0\} \lra \bar{k}$ is that there exists $\lambda \in \bar{k}\unts$ such that
\[ r'(g'(v)) = \lambda r'(v) \, \text{, for all $v \in V\setminus \{0\}$.} \]
For $(l_{W})_{W \subset V} \in \bB_V (\bar{k})$, one can express that $g$ fixes $(l_{W})_{W \subset V}$ in the analogous way.
In the following, we will implicitly choose representatives if necessary for concrete computations.

\begin{lemma}\label{Lemma - Fixpoint reduction P}
	Let $l \in \CP_{V/V'}(\bar{k})$, for a subspace $V'\subsetneq V$, and $g\in \PGL(V)$.
	Then $g$ fixes $l$ if and only if $V'$ is $g$-invariant, i.e.\ $\mathrm{\mathbf{P}}_{V/V'}$ is $g$-invariant, and the induced $\bar{g}\in \PGL(V/V')$ fixes $\bar{l} \in \mathrm{\mathbf{P}}_{V/V'}(\bar{k})$.
\end{lemma}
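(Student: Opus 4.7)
The plan is to reduce everything to the characterization in Lemma \ref{Lemma - Points of P and stratification}, namely that $l \in \CP_{V/V'}(\bar{k})$ is equivalent to $\Ker(l) \cap V = V'$, and then manipulate the defining equation $g^{\ast}(l) = \lambda l$ for a suitable representative and scalar $\lambda \in \bar{k}\unts$. The whole argument is essentially linear algebra; there is no real obstacle, only the bookkeeping needed to pass between $\GL(V)$ representatives and $\PGL(V)$ classes, and likewise between $l$ and $\bar{l}$.

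For the forward direction, I would start by fixing representatives $g \in \GL(V)$ and $l \colon V_{\bar{k}} \myrightarrowdbl{} \bar{k}$ so that $l \circ g = \lambda l$ for some $\lambda \in \bar{k}\unts$. Taking kernels gives $g^{-1}(\Ker l) = \Ker l$, so $\Ker l$ is $g$-invariant. Intersecting with $V$, which is itself $g$-invariant as $g \in \GL(V)$, yields $g$-invariance of $V' = \Ker(l) \cap V$. Since $V' \subset \Ker l$, the map $l$ factors through $V_{\bar{k}}/V'_{\bar{k}} = (V/V')_{\bar{k}}$ as $\bar{l}$, and because $V'$ is $g$-stable, $g$ induces $\bar{g} \in \GL(V/V')$. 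The identity $l \circ g = \lambda l$ then descends to $\bar{l} \circ \bar{g} = \lambda \bar{l}$, i.e.\ $\bar{g}$ fixes $\bar{l}$ in $\bP_{V/V'}(\bar{k})$.

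For the converse, assume $V'$ is $g$-invariant and that for a representative of $\bar{g}$ there exists $\lambda \in \bar{k}\unts$ with $\bar{l} \circ \bar{g} = \lambda \bar{l}$. Consider the map $\varphi \defeq l \circ g - \lambda l \colon V_{\bar{k}} \lra \bar{k}$. On $V'_{\bar{k}}$ it vanishes because $V' \subset \Ker l$ and $V'$ is $g$-invariant, so $\varphi$ factors through $(V/V')_{\bar{k}}$ where it equals $\bar{l} \circ \bar{g} - \lambda \bar{l} = 0$. Hence $\varphi = 0$ and $l \circ g = \lambda l$, i.e.\ $g$ fixes $l$. Finally I would note that this reduction lets us henceforth assume, when computing stabilizers of points in $\CP_{V/V'}(\bar{k})$, that the stratum is the open stratum $\Omega_{V/V'}$ of $\bP_{V/V'}$, which is the setting treated in Proposition \ref{Prop - Fixpoints of Omega} cited in Theorem \ref{Thm - Stabilizers}.
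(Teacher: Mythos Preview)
Your proof is correct and follows essentially the same approach as the paper: both fix representatives, use the characterization $V' = \Ker(l) \cap V$ from Lemma~\ref{Lemma - Points of P and stratification}, and check the condition $l \circ g = \lambda l$ elementwise, the only cosmetic difference being that you deduce $g$-invariance of $V'$ from $g^{-1}(\Ker l) = \Ker l$ intersected with $V$, whereas the paper verifies $g(v) \in V'$ for $v \in V'$ directly.
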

\begin{proof}
	For $v\in V_{\bar{k}}$ we denote the residue in $\left( V/V'\right)_{\bar{k}}$ by $\ov{v}$.
	First let $g$ fix $l$, i.e.\ let there be $\lambda \in \bar{k}\unts$ such that $g(v) - \lambda v \in \Ker(l)$ for all $v\in V_{\bar{k}}$.
	As $l \in \CP_{V/V'}(\bar{k})$, it follows from Lemma \ref{Lemma - Points of P and stratification} that $V' = \Ker(l) \cap V \subset \Ker(l)$.
	Hence $g(v) \in \Ker(l) \cap V = V' $, for all $v\in V'$.
	Therefore
	\[ \bar{g} \colon V/V' \lra V/V', \; \ov{v} \longmapsto \ov{g(v)} \]
	is a well defined automorphism of $V/V'$, and for $\bar{l} \colon \left( V/V'\right)_{\bar{k}} \myrightarrowdbl{} \bar{k}$ we compute that
	\[ \bar{l}\left( \bar{g}(\ov{v}) - \lambda \ov{v}  \right) = \bar{l}\left( \ov{g(v) - \lambda v} \right) = l\left( g(v)-\lambda v\right) = 0 , \]
	for all $\ov{v} \in \left(V/V'\right)_{\bar{k}}$.
	Hence $\bar{g}$ fixes $\bar{l}\in \bP_{V/V'}(\bar{k})$.
	
	Conversely, let $V'$ be $g$ invariant and let $\bar{g}$ fix $\bar{l}\in \bP_{V/V'}(\bar{k})$, i.e.\ let $\bar{g}(\ov{v}) - \lambda\ov{v} \in \Ker(\bar{l})$, for all $ \ov{v} \in \left(V/V'\right)_{\bar{k}}$.
	As 
	\[\bar{l}\left( \ov{g(v) - \lambda v }\right) = \bar{l}\left( \bar{g}(\ov{v}) - \lambda \ov{v}  \right) = 0 ,  \]
	for all $v \in V_{\bar{k}}$, we have $g(v)-\lambda v \in \Ker(l) + V_{\bar{k}}' = \Ker(l)$.
	Therefore $g$ fixes $l$.	
\end{proof}

\begin{lemma}\label{Lemma - Fixpoint reduction Q}
	Let $r \in \CQ_{V'}(\bar{k})$, for a subspace $\{0\} \neq V' \subset V$, and $g\in \PGL(V)$.
	Then $g$ fixes $r$ if and only if $V\setminus V'$ is $g$-invariant, i.e.\ $\mathrm{\mathbf{Q}}_{V'}$ is $g$-invariant, and $g\res{V'}$ fixes $r\res{V'\setminus \{0\} } \in \mathrm{\mathbf{Q}}_{V'}(\bar{k})$.
\end{lemma}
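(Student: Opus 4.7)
The plan is to mimic the structure of the proof of Lemma \ref{Lemma - Fixpoint reduction P}, but using the characterization of strata of $\bQ_V$ provided by Lemma \ref{Lemma - Points of Q and stratification} (i.e.\ $r \in \CQ_{V'}(\bar{k})$ iff the vanishing locus of $r$ in $V\setminus\{0\}$ is exactly $V\setminus V'$) in place of the one for $\bP_V$. Concretely, I will work with a representative $r\colon V\setminus\{0\}\to\bar{k}$ and a lift $g\in\GL(V)$, and test $g$-invariance of $r$ via the cocycle relation $r(g(v))=\lambda\, r(v)$ for some $\lambda\in\bar{k}\unts$ and all $v\in V\setminus\{0\}$.

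For the forward direction, suppose $g$ fixes $r$, so $r(g(v))=\lambda r(v)$ for all $v\in V\setminus\{0\}$. Since $\lambda\in\bar{k}\unts$, this gives $r(g(v))=0$ iff $r(v)=0$. By Lemma \ref{Lemma - Points of Q and stratification}, this equivalence reads $g(v)\in V\setminus V'$ iff $v\in V\setminus V'$, so $g$ permutes $V\setminus V'$, equivalently $V'$ is $g$-invariant (hence $\bQ_{V'}$ is $g$-invariant, since the closed immersion $\bQ_{V'}\hookrightarrow\bQ_V$ is determined by the ideal generated by $\frac{1}{v}$ for $v\in V\setminus V'$, which is stable under the induced $\GL(V)$-action). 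The restriction $g\res{V'}\in\PGL(V')$ is then well-defined, and the same relation $r(g\res{V'}(v))=\lambda r(v)$ restricted to $v\in V'\setminus\{0\}$ shows that $g\res{V'}$ fixes $r\res{V'\setminus\{0\}}$.

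For the reverse direction, assume $V'$ is $g$-invariant and $g\res{V'}$ fixes $r\res{V'\setminus\{0\}}$, so there is $\lambda\in\bar{k}\unts$ with $r(g(v))=\lambda r(v)$ for all $v\in V'\setminus\{0\}$. For $v\in V\setminus V'$, $g$-invariance of $V'$ (equivalently of $V\setminus V'$, since $g$ is bijective on $V$) gives $g(v)\in V\setminus V'$, whence $r(g(v))=0=\lambda r(v)$ by Lemma \ref{Lemma - Points of Q and stratification}. Combining the two cases, $r(g(v))=\lambda r(v)$ holds on all of $V\setminus\{0\}$, so $g$ fixes $r$.

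I do not anticipate a real obstacle: the argument is a direct translation of the $\bP_V$-case, with the single cosmetic difference that the relevant $g$-invariant object is the complement $V\setminus V'$ (i.e.\ the vanishing locus of $r$) rather than the kernel $V'=\Ker(l)\cap V$. The only mild care needed is to verify that $g$-invariance of the set $V\setminus V'\subset V$ is equivalent to $g$-invariance of the subspace $V'$ (immediate, since $g\in\GL(V)$ is bijective) and that this is in turn the correct notion of $\bQ_{V'}$ being $g$-stable, which follows from the description of the ideal cutting out $\bQ_{V'}\subset\bQ_V$.
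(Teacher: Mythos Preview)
Your proof is correct and follows essentially the same approach as the paper: both directions use Lemma \ref{Lemma - Points of Q and stratification} to identify the vanishing locus of $r$ with $V\setminus V'$, deduce $g$-invariance of $V\setminus V'$ (equivalently of $V'$) from the relation $r(g(v))=\lambda r(v)$, and then handle the converse by splitting into the cases $v\in V'\setminus\{0\}$ and $v\in V\setminus V'$. Your additional remarks on why $g$-invariance of $V\setminus V'$ is equivalent to $g$-invariance of $V'$ and to stability of $\bQ_{V'}$ are helpful elaborations but not substantive departures.
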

\begin{proof}
	Let $g$ fix $r$, i.e.\ let $\lambda \in \bar{k}\unts$ such that
	\[ r(g(v))  =  \lambda r(v) \, \text{, for all $v \in V\setminus \{0\}$.} \]
	Hence, by Lemma \ref{Lemma - Points of Q and stratification}, we deduce that both $V\setminus V'$ and $V'$ are $g$-invariant.
	Furthermore, it follows directly that $g\res{V'}$ fixes $r\res{V' \setminus \{0\}}$.
	
	Conversely, suppose that $V\setminus V'$ is $g$-invariant and $g\res{V'}$ fixes $r\res{V' \setminus \{0\}}$.
	Let $\lambda \in \bar{k}\unts $ such that
	\[ r\res{V' \setminus \{0\}} (g\res{V'} (v)) = \lambda r\res{V' \setminus \{0\}} (v) \, \text{, for all $v\in V'\setminus \{0\}$. } \]
	But for $v\in V\setminus V'$, we have $g(v) \in V\setminus V'$, hence by Lemma \ref{Lemma - Points of Q and stratification}
	\[ r(g(v)) = 0 = \lambda r(v) \, \text{, for all $v\in V \setminus V'$.} \]
	Therefore $g$ fixes $r$.
\end{proof}

The following lemma allows us to determine the stabilizer of $\bar{k}$-valued points of $\CQ_V$ by computing the stabilizer of $\bar{k}$-valued points of $\CP_V$:

\begin{lemma}\label{Lemma - Equivariant map k points Omega}
	The isomorphism of $\bar{k}$-valued points
	\begin{alignat*}{2}
		\mathrm{\mathbf{P}}_V (\bar{k}) \supset \Omega_V (\bar{k})	&	\,\, \;\;\!\!\! =	&& \;	\Omega_V (\bar{k})	\subset \mathrm{\mathbf{Q}}_V (\bar{k})	\\
			\left( l \colon V_{\bar{k}} \twoheadrightarrow \bar{k} \right)		&\longmapsto	&& \, \left( r \colon V\setminus \{0\} \rightarrow \bar{k}, v \mapsto \tfrac{1}{l(v)} \right)
	\end{alignat*}
	induced from the birational equivalence of $\mathrm{\mathbf{P}}_V $ and $ \mathrm{\mathbf{Q}}_V$ is equivariant with respect to the action of $\PGL(V)$.
\end{lemma}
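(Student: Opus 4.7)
The plan is to check equivariance by a direct pointwise calculation, exploiting the fact that the map in the statement is literally the identity on the common open subscheme $\Omega_V$, viewed inside both $\bP_V$ and $\bQ_V$.

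First, I would fix a surjection $l \colon V_{\bar{k}} \twoheadrightarrow \bar{k}$ representing an element of $\Omega_V(\bar{k}) \subset \bP_V(\bar{k})$ and an element $g \in \PGL(V)$, and then apply the formulas for the two actions recalled at the start of Section \ref{Sect - PGL(V)-action}. On the $\bP_V$-side this gives $(l \ldotp g)(v) = l(g(v))$, whose image in $\Omega_V(\bar{k}) \subset \bQ_V(\bar{k})$ under the bijection $l \leftrightarrow r$ with $r(v) = 1/l(v)$ is the reciprocal map $v \mapsto 1/l(g(v))$. On the $\bQ_V$-side this agrees exactly with $(r \ldotp g)(v) = r(g(v)) = 1/l(g(v))$, which is the full content of the lemma.

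For a more conceptual picture I would also point out the scheme-theoretic shadow of this calculation. The coordinate ring of $\Omega_V$ is $k\!\left[ v/w \mid v \in V,\ w \in V \setminus \{0\} \right]$, and the generator $v/w$ is sent to $g(v)/g(w)$ under the action coming from either $\mathrm{Sym}\, V$ or $R_V$. In the latter case this is because $v/w = (1/w)/(1/v)$ is the quotient of two degree-one generators of $R_V$, each of which transforms as $1/u \mapsto 1/g(u)$, so the two reciprocations cancel and no $g^{-1}$ appears. Thus the two $\PGL(V)$-actions on $\bP_V$ and $\bQ_V$ restrict to the same action on $\Omega_V$, and the identity map on $\Omega_V$ is automatically equivariant.

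There is no genuine obstacle here; the only care needed is aligning the right-action conventions and recording that the reciprocation $l \mapsto 1/l(\cdot)$ is covariant in $g$, so that the naive fear that passing from $l$ to $r$ might introduce an inversion of $g$ does not materialize.
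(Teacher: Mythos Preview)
Your proposal is correct and is precisely the ``straightforward calculation'' that the paper alludes to but omits entirely. Your direct pointwise check on $\bar{k}$-valued points is exactly what is needed, and the additional scheme-theoretic remark about the coordinate ring of $\Omega_V$ is a nice complement, though not required.
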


The proof of this lemma is a straightforward calculation, and we omit it.

\begin{lemma}\label{Lemma - Fixpoint reduction B}
	Let $\left( l_{W} \right)_{W\subset V} \in \CB_{\CF}( \bar{k} )$, for some flag $\CF=\left( \{0\} \subsetneq V_{r} \subset \ldots \subset V_{0} \subsetneq V \right)$, and $g\in \PGL(V)$.
	Then $g$ fixes $\left( l_{W} \right)_{W \subset V}$ if and only if $g$ fixes the flag $\CF$, i.e.\ $\bigcap_{V' \in \CF} D_{V'}$ is $g$-invariant, and the induced $\bar{g}_i \in \PGL\left(V_{i-1}/V_{i}\right)$ fixes the induced $\bar{l}_{V_{i-1}} \in \mathrm{\mathbf{P}}_{V_{i-1}/V_{i}} ( \bar{k} ) $, for all $i=0,\ldots,r+1$, where $V_{-1} = V$ and $V_{r+1} = \{0\}$.
\end{lemma}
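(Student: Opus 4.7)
The proof will parallel the strategy of Lemma \ref{Lemma - Fixpoint reduction P} and Lemma \ref{Lemma - Fixpoint reduction Q}, but now distributed across every step of the flag $\CF$; for the backward direction we will invoke Lemma \ref{Lemma - Fixpoint reduction P} on each of the quotients $V_{i-1}/V_i$.

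For the forward direction, suppose $g$ fixes $x = (l_W)_{W\subset V}$. The first step is to show that $g$ preserves the flag $\CF$. Using the characterization from the proof of Lemma \ref{Lemma - Points of B and stratification}, that $x \in D_{V'}$ if and only if $V' \subset \Ker(l_W) \cap W$ for every $V' \subsetneq W$, a direct substitution applied to $x\ldotp g = (l_{g(W)} \circ g\res{W_{\bar k}})_{W}$ gives the equivalence $x\ldotp g \in D_{V'} \iff x \in D_{g(V')}$. Since $x$ is $g$-fixed, the set $\{V'\subsetneq V \mid V' \neq \{0\}, \, x \in D_{V'}\} = \{V_0,\ldots,V_r\}$ is $g$-stable; as $g$ preserves dimensions and inclusions the equalities $g(V_i) = V_i$ follow. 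Now fix $i \in \{0,\ldots,r+1\}$. The $V_{i-1}$-component of the fixed-point condition reads $l_{V_{i-1}} \circ g\res{V_{i-1,\bar k}} = \lambda_i l_{V_{i-1}}$ for some $\lambda_i \in \bar k\unts$; that is, $g\res{V_{i-1}} \in \PGL(V_{i-1})$ fixes $l_{V_{i-1}} \in \bP_{V_{i-1}}(\bar k)$. By Lemma \ref{Lemma - Points of B and stratification} we have $l_{V_{i-1}} \in \CP_{V_{i-1}/V_i}(\bar k)$, so Lemma \ref{Lemma - Fixpoint reduction P} applied inside $\bP_{V_{i-1}}$ yields that $\bar g_i$ fixes $\bar l_{V_{i-1}}$.

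For the backward direction, assume that $g$ fixes $\CF$ and that each $\bar g_i$ fixes $\bar l_{V_{i-1}}$. Applying Lemma \ref{Lemma - Fixpoint reduction P} in the reverse direction to each $\bP_{V_{i-1}}$ gives scalars $\lambda_i \in \bar k\unts$ with $l_{V_{i-1}} \circ g\res{V_{i-1,\bar k}} = \lambda_i l_{V_{i-1}}$. To upgrade this to the full system, I use that $\bB_V(\bar k)$ embeds into $\prod_{W} \bP_W(\bar k)$, so it is enough to check, for every $\{0\}\neq W \subset V$, that $l_{g(W)} \circ g\res{W_{\bar k}}$ and $l_W$ define the same point of $\bP_W(\bar k)$. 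The key observation is that, for each such $W$, if $j = j(W) \in \{0,\ldots,r+1\}$ denotes the smallest index with $W \not\subset V_j$ (with $V_{-1} \defeq V$ and $V_{r+1} \defeq \{0\}$), then $l_W$ and $l_{V_{j-1}}\res{W_{\bar k}}$ define the same point of $\bP_W(\bar k)$: the hypothesis $\Ker(l_{V_{j-1}}) \cap V_{j-1} = V_j$ supplies some $w \in W \setminus V_j$ with $l_{V_{j-1}}(w) \neq 0$, so $l_{V_{j-1}}\res{W_{\bar k}}$ is surjective, and the commutative diagram $l_{V_{j-1}}\res{W_{\bar k}} = \psi_W^{V_{j-1}} \circ l_W$ then forces $\psi_W^{V_{j-1}} \in \End_{\bar k}(\bar k)$ to be a nonzero scalar. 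Since $g$ fixes $\CF$ we have $j(g(W)) = j(W) = j$, and the computation
\[ l_{g(W)} \circ g\res{W_{\bar k}} \; \sim \; l_{V_{j-1}}\res{g(W)_{\bar k}} \circ g\res{W_{\bar k}} \;=\; l_{V_{j-1}} \circ g\res{W_{\bar k}} \;=\; \lambda_j \, l_{V_{j-1}}\res{W_{\bar k}} \; \sim \; l_W \]
(where $\sim$ denotes proportionality in $\bP_W(\bar k)$) finishes the proof.

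The principal obstacle is the backward direction: one must propagate the fixed-point property from the finitely many components $l_{V_{i-1}}$ to every $l_W$. The trick above assigns to each $W$ a canonical anchor $V_{j(W)-1} \in \CF \cup \{V\}$ such that $l_W$ is determined, up to scalar, by the restriction of $l_{V_{j(W)-1}}$; the $g$-equivariance of the assignment $W \mapsto V_{j(W)-1}$ (which rests on $g$ fixing $\CF$) then transports the fixed-point property to all of $\RX_{n-1} \subset \bP_{\ul V, n-1}$.
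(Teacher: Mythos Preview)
Your proof is correct and follows the same strategy as the paper. The only difference is in the forward direction: you deduce that $g$ preserves the flag from the equivariance $x\ldotp g \in D_{V'} \Leftrightarrow x \in D_{g(V')}$ on the stratification, whereas the paper obtains $g(V_i)=V_i$ inductively by applying Lemma \ref{Lemma - Fixpoint reduction P} to each $l_{V_{i-1}}$ in turn; the backward direction is identical in substance (your argument that $\psi_W^{V_{j-1}}$ is a nonzero scalar is exactly the content of Remark \ref{Rem - About Stratification of B}(ii), which the paper invokes).
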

\begin{proof}
	Recall that for $\left( l_{W} \right)_{W\subset V} \in \CB_{\CF}( \bar{k} )$ by Lemma \ref{Lemma - Points of B and stratification}
	\[ \Ker ( l_{V_{i}} ) \cap V_{i} = V_{i+1} , \]
	for all $i= -1,\ldots, r$.
	The element $g \in \PGL(V)$ fixes $\left( l_{W} \right)_{W\subset V} \in \bB_V ( \bar{k} )$ if and only if for all $\{0\} \neq W \subset V$, there exist $\lambda_{W} \in \bar{k}\unts$ such that
	\begin{align}\label{Eq - Fixing for subspaces B}
		l_{g(W)} \circ g\res{W_{\bar{k}}}  = \lambda_{W} l_{W}.
	\end{align}
	
	First, assume that this is the case.
	By applying Lemma \ref{Lemma - Fixpoint reduction P} to $l_V$, we find that $g$ leaves $V_{0} = \Ker(l_V) \cap V$ invariant, and $\bar{g}_{0} \in \PGL (V/V_{0} )$ fixes $\bar{l}_{V} \in \bP_{V/V_{0}} ( \bar{k} )$.
	Then \eqref{Eq - Fixing for subspaces B} reads for $V_{0}$:
	\[l_{V_{0}} \circ g\res{V_{0 \bar{k}}} = \lambda_{V_{0}} l_{V_{0}} .\]
	Hence we can proceed inductively to arrive with the claim.
	
	Now suppose that $g$ fixes $\CF$, and $\bar{g}_i \in \PGL\left(V_{i-1}/V_{i}\right)$ fixes $\bar{l}_{V_{i-1}} \in \bP_{V_{i-1}/V_{i}} ( \bar{k} ) $, for all $i=0,\ldots,r+1$.
	By Lemma \ref{Lemma - Fixpoint reduction P} this implies that $g$ fixes $l_{V_{i}}$ with some $\lambda_{V_{i}} \in \bar{k}\unts$, for $i=-1,\ldots,r$.
	For an arbitrary subspace $\{0\} \neq W \subset V$, let $i' \in \{0,\ldots,r+1 \}$ such that $W \subset V_{i'-1}$ and $W \nsubset V_{i'}$.
	We apply Remark \ref{Rem - About Stratification of B} (ii) to $W \subset V_{i'-1} $:
	The first case cannot occur, as 
	\[ V_{i'} = \Ker ( l_{V_{i'-1}}  ) \cap V_{i'-1} = W \]
	is a contradiction to $W \nsubset V_{i'}$. 
	Hence it must be $l_{W} = l_{V_{i-1}}\res{W_{\bar{k}}}$. 
	Note that also $g(W) \subset V_{i'-1}$ and $g(W) \nsubset V_{i'}$ as $V_{i'-1}$ and $V_{i'}$ are $g$-invariant.
	Therefore, the same reasoning applies, and we have $l_{g(W)} = l_{V_{i'-1}}\res{g(W)_{\bar{k}}}$.
	We compute that
	\begin{align*}
		l_{g(W)} \circ g\res{W_{\bar{k}}} &= l_{V_{i'-1}}\res{g(W)_{\bar{k}}} \circ g\res{W_{\bar{k}}}	\\
			&= (l_{V_{i'-1}} \circ g ) \res{W_{\bar{k}}}				\\
			&= \lambda_{V_{i'-1}} l_{V_{i'-1}}\res{W_{\bar{k}}}			\\
			&= \lambda_{V_{i'-1}} l_{W} .
	\end{align*}
	Hence $g$ fixes $\left( l_{W} \right)_{W\subset V}$.
\end{proof}

Regarding the proof of Theorem \ref{Thm - Stabilizers}, it remains to consider $\mathrm{Stab}(l)$ for $l \in \Omega (\bar{k}) \subset \bP_V (\bar{k})$.
We have the canonical left $\Gal(\bar{k}/k)$-action on $V_{\bar{k}}$ by
\[ \sigma  \left( v \otimes \lambda \right) = v \otimes \sigma(\lambda) , \]
for all $\sigma \in \Gal(\bar{k}/k)$, $v \otimes \lambda \in V_{\bar{k}}$.
This induces also a right $\Gal(\bar{k}/k)$-action on $V_{\bar{k}}^{\ast}$ with
\[ l^{\sigma} (v)  =  \sigma^{-1} \circ l \circ \sigma (v) , \]
for all $\sigma \in \Gal(\bar{k}/k)$, $l \in V_{\bar{k}}^{\ast}$, $v \in V_{\bar{k}}$, and a right $\Gal (\bar{k}/k)$-action on $\GL(V_{\bar{k}})$ by
\[ g^{\sigma} (v) = \sigma^{-1} \circ g \circ \sigma (v) , \]
for all $\sigma \in \Gal(\bar{k}/k)$, $g \in \GL(V_{\bar{k}})$, $v \in V_{\bar{k}}$.
The elements of $\GL(V)$ are exactly the elements of $\GL(V_{\bar{k}})$ which are fixed under this $\Gal(\bar{k}/k)$-action.	
Furthermore, let $F \in \Gal(\bar{k}/k)$ denote the Frobenius automorphism relative to $k = \BF_q$, with $F(\lambda) = \lambda^q$, for $\lambda \in \bar{k}$.

\begin{lemma}\label{Lemma - Basis for points of Omega}
	Let $ l \in \CP_{V/V'}( \bar{k} )$. Then $dim_{\bar{k}} \, \Span\left( l,l^{F},\ldots \right) = \dim_k \, V - \dim_k \, V'$.
	Especially if $ l \in \CP_{V}( \bar{k} ) = \Omega_V (\bar{k})$, then $l,l^{F},\ldots,l^{F^{n}}$ is a basis of $ V_{\bar{k}}^{\ast}$.
\end{lemma}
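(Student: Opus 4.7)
The plan is to compute $U \defeq \Span_{\bar{k}}( l, l^F, l^{F^2}, \ldots ) \subset V_{\bar{k}}^{\ast}$ by identifying its annihilator
\[
U^{\perp} = \left\{ v \in V_{\bar{k}} \middle| u(v) = 0 \text{ for all } u \in U \right\} \subset V_{\bar{k}}
\]
and using $\dim_{\bar{k}} U + \dim_{\bar{k}} U^{\perp} = \dim_k V$. From $l^{F}(v) = F^{-1}( l ( F(v) ) )$ one reads off $\Ker ( l^{F^i} ) = F^{-i}( \Ker l )$, hence
\[
U^{\perp} = \bigcap_{i \geq 0} F^{-i}( \Ker l ).
\]
Since the coordinates of $l$ with respect to any $k$-basis of $V^{\ast}$ lie in a finite extension $k' = \BF_{q^r} \subset \bar{k}$, one has $l^{F^r} = l$, so the intersection is periodic (in particular finite) and stable under both $F$ and $F^{-1}$.

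The main step is Galois descent. The annihilator $U^{\perp}$ is a finite-dimensional $\bar{k}$-subspace of $V_{\bar{k}}$ and is defined over some finite extension $k'' / k$; stability under $F$ therefore upgrades to stability under the cyclic group $\Gal(k''/k) = \langle F \rangle$, and hence under all of $\Gal( \bar{k}/k )$. Consequently $U^{\perp} = W_{\bar{k}}$ for the unique $k$-subspace $W = U^{\perp} \cap V \subset V$. Using that $F$ fixes $V \subset V_{\bar{k}}$ pointwise, the conditions $v \in V$ and $F^i (v) \in \Ker(l)$ for all $i \geq 0$ collapse to $v \in \Ker(l) \cap V$, which by Lemma \ref{Lemma - Points of P and stratification} equals $V'$. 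Therefore $\dim_{\bar{k}} U^{\perp} = \dim_k V'$, and hence $\dim_{\bar{k}} U = \dim_k V - \dim_k V'$.

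For the second part, $l \in \Omega_V ( \bar{k} )$ gives $V' = \{0\}$, so $\dim_{\bar{k}} U = n+1 = \dim_{\bar{k}} V_{\bar{k}}^{\ast}$; it remains to show that $l, l^F, \ldots, l^{F^n}$ are already linearly independent, as they are then a basis by dimension count. Let $m$ be the smallest integer such that $l, l^F, \ldots, l^{F^m}$ are $\bar{k}$-linearly dependent; by minimality $l^{F^m} = \sum_{i=0}^{m-1} a_i l^{F^i}$ for suitable $a_i \in \bar{k}$. Since $F$ acts $F^{-1}$-semilinearly on $V_{\bar{k}}^{\ast}$ (a direct check gives $(\lambda u)^{F} = F^{-1}(\lambda) \cdot u^{F}$), applying $F$ to this relation yields
\[
l^{F^{m+1}} \in \Span_{\bar{k}} ( l^{F}, \ldots, l^{F^{m}} ) \subset \Span_{\bar{k}} ( l, \ldots, l^{F^{m-1}} ),
\]
and iterating places every $l^{F^j}$, $j \geq 0$, inside this span, so $\dim_{\bar{k}} U \leq m$. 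Comparing with $\dim_{\bar{k}} U = n+1$ forces $m \geq n+1$, i.e.\ $l, l^F, \ldots, l^{F^n}$ are linearly independent.

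The principal technical point to watch is the Galois descent: one must make the orbit $\{ l^{F^i} \}_{i \geq 0}$ visibly finite via the finite field of definition of $l$ so that $F$-stability of $U^{\perp}$ genuinely upgrades to stability under the full $\Gal(\bar{k}/k)$ and produces a $k$-rational descent. The remainder is a short combination of the annihilator dimension formula with the semilinear bookkeeping for the second assertion.
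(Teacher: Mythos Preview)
Your proof is correct and follows essentially the same route as the paper: compute the annihilator $U^{\perp}=\bigcap_{i\geq 0}\Ker(l^{F^i})$, use $\Gal(\bar{k}/k)$-invariance to descend it to $\Ker(l)\cap V=V'$, and read off the dimension. You are more explicit than the paper on two points---the periodicity of $\{l^{F^i}\}$ needed to get genuine $F$-stability of $U^{\perp}$, and the semilinearity bookkeeping showing that a first dependence among $l,\ldots,l^{F^m}$ forces $\dim_{\bar{k}}U\leq m$---whereas the paper simply asserts Galois invariance of the intersection and remarks that a dependence would make $\dim_{\bar{k}}U<n+1$.
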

\begin{proof}
	Let $U \defeq \Span\left( l,l^{F},\ldots \right) \subset  V_{\bar{k}}^{\ast}$. Then we have 
	\[ U \cong \bigg( V_{\bar{k}} \bigg/  \bigcap_{i\geq 0} \Ker \big( l^{F^{i}} \big)  \bigg)^{\ast} \]
	and 
	\[\bigcap_{i\geq 0} \Ker\big( l^{F^{i}} \big) = \bigg( \bigcap_{i\geq 0} \left( \Ker \big( l^{F^{i}} \big) \cap V \right) \bigg)_{\bar{k}} \]
	as $\bigcap_{i\geq 0} \Ker \big(l^{F^{i}}\big) $ is $\Gal(\bar{k}/k)$-invariant. Furthermore
	\[\bigcap_{i\geq 0} \left(\Ker \big( l^{F^{i}} \big) \cap V \right) = \Ker(l) \cap V  \]
	because for $v\in \Ker(l) \cap V$ we have
	\[l^{F^i} (v) = F^{-i}\circ l \circ F^i (v)= F^{-i}\circ l(v) = 0 , \]
	for all $i\geq 0$.
	Since $ l \in \CP_{V/V'} (\bar{k})$, we have $\Ker(l) \cap V = V'$, and the first claim follows.
	
	If $V'= \{0\}$, this implies that $U =  V_{\bar{k}}^{\ast}$.
	Moreover, $l,l^{F},\ldots,l^{F^{n}}$ are linearly independent, as otherwise one finds that $\dim_{\bar{k}} \, U < \dim_k \, V =  n+1$.
\end{proof}

Let $k_d$ denote the unique intermediate field $k \subset k_d \subset \bar{k}$ of degree $d$ over $k$, i.e.\ $k_d$ consists of the elements in $\bar{k}$ that are fixed by $F^d \in \Gal(\bar{k}/k)$.

\begin{proposition}\label{Prop - Fixpoints of Omega}
	Let $l \in \Omega_V (\bar{k}) \subset \mathrm{\mathbf{P}}_{V} (\bar{k}) $ and $g \in \PGL(V)$. Then $g$ fixes $l$ if and only if the following conditions hold:
	\begin{altitemize}
		\item[(i) ] There exists $d\in \BN$ with $d \mid \dim_k \, V = n+1 $ such that
			\[ \Span \left( l,l^{F^d},\ldots \right) = \Span \left( l, l^{F^d},\ldots, l^{F^{n+1-d}} \right) \eqdef U . \]
		\item[(ii )] There exists $\lambda \in k_d\unts$ such that the transpose $g^{\ast} \in \PGL(V^{\ast}_{\bar{k}})$ is of the form
			\[ g^{\ast} \res{F^i (U)} = F^{-i} (\lambda) \, \mathrm{id}_{F^i (U)} \, \text{, for $i= 0,\ldots,d-1$.} \]
	\end{altitemize}
\end{proposition}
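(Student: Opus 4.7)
The plan is to translate the condition ``$g$ fixes $l$'' into a spectral statement for the transpose $g^\ast$ acting on $V_{\bar k}^\ast$, and combine it with the Galois action. By Lemma \ref{Lemma - Basis for points of Omega}, the vectors $l, l^F, \ldots, l^{F^n}$ form a $\bar k$-basis of $V_{\bar k}^\ast$. Since $g \in \GL(V)$ commutes with the Galois action on $V_{\bar k}$, a direct computation gives $(g^\ast(l'))^\sigma = g^\ast(l'^\sigma)$; combined with the semi-linearity $(c l')^\sigma = \sigma^{-1}(c)\,l'^\sigma$, this shows that $g^\ast(l) = \lambda l$ forces
\[
	g^\ast(l^{F^i}) \;=\; F^{-i}(\lambda)\, l^{F^i} \quad \text{for every } i \geq 0,
\]
so $g^\ast$ is already diagonal in the basis $l, l^F, \ldots, l^{F^n}$, with eigenvalues lying in the $F$-orbit of $\lambda$.

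For the forward direction, let $d$ denote the $F$-orbit size of $\lambda$, i.e.\ the minimal integer with $F^d(\lambda)=\lambda$; equivalently $\lambda \in k_d^\times$. The map $v \mapsto v^F$ sends each eigenspace $E_\mu$ bijectively onto $E_{F^{-1}(\mu)}$, so all $d$ eigenspaces in the $F$-orbit of $\lambda$ have common $\bar k$-dimension $m$. Since the basis $l, l^F, \ldots, l^{F^n}$ is exhausted by eigenvectors, $md = n+1$ and in particular $d \mid n+1$. The eigenspace $U \defeq E_\lambda$ is then precisely the span of those $l^{F^j}$ with $d \mid j$ and $0 \leq j \leq n$, namely $l, l^{F^d}, \ldots, l^{F^{n+1-d}}$, which proves (i); the identifications $F^i(U) = E_{F^{-i}(\lambda)}$ and $g^\ast|_{E_{F^{-i}(\lambda)}} = F^{-i}(\lambda)\cdot \mathrm{id}$ then give (ii).

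Conversely, assume (i) and (ii). Linear independence of $l, l^{F^d}, \ldots, l^{F^{n+1-d}}$ (a subset of a basis) yields $\dim_{\bar k} U = m$, and the equality in (i) is equivalent to $F^d$-stability of $U$ (it says $l^{F^{md}} \in U$). Hence $V_{\bar k}^\ast = \bigoplus_{i=0}^{d-1} F^i(U)$, because the subspaces $F^i(U)$ jointly contain the basis $l, l^F, \ldots, l^{F^n}$ and their dimensions sum to $md = n+1$. Thus (ii) defines a $\bar k$-linear endomorphism $g^\ast$ of $V_{\bar k}^\ast$ with $g^\ast(l) = \lambda l$; it remains to show that this $g^\ast$ descends to some $g \in \GL(V)$, i.e.\ commutes with the action of $F$ on $V_{\bar k}^\ast$. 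For $v \in F^i(U)$ with $0 \leq i \leq d-2$, both $g^\ast(F(v))$ and $F(g^\ast(v))$ reduce to $F^{-(i+1)}(\lambda)\,F(v)$ via the $F^{-1}$-semi-linearity $F(cv) = F^{-1}(c)F(v)$; for $i = d-1$ the same identity follows from $F^d(U) = U$ together with $F^d(\lambda) = \lambda$, where the divisibility $d \mid n+1$ is used crucially.

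I do not expect any serious obstacle; the only subtlety is the careful bookkeeping of the $F^{-1}$-semi-linearity of the Galois action on $V_{\bar k}^\ast$, and seeing that the divisibility $d \mid n+1$ emerges automatically from the equi-dimensionality of the eigenspaces forced by $F$-symmetry. Everything else is linear algebra in the diagonal basis $l, l^F, \ldots, l^{F^n}$.
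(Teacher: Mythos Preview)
Your proposal is correct and follows essentially the same approach as the paper: both diagonalize $g^{\ast}$ in the basis $l, l^{F}, \ldots, l^{F^{n}}$ furnished by Lemma~\ref{Lemma - Basis for points of Omega}, exploit the Galois symmetry $g^{\ast}(l^{F^{i}}) = F^{-i}(\lambda)\, l^{F^{i}}$ to deduce the eigenspace structure and the divisibility $d \mid n+1$, and for the converse verify that the operator prescribed by (ii) is Galois-invariant using the $F^{d}$-stability of $U$ coming from (i). The only cosmetic difference is that the paper checks $(g^{F^{-1}})^{\ast} = g^{\ast}$ on each basis vector $l^{F^{i}}$, whereas you phrase it as $g^{\ast}$ commuting with the $F$-action on each subspace $F^{i}(U)$; these are the same computation.
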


\begin{remarks}\label{Rem - For fixpoints of Omega}
	\begin{altitemize}
		\item[(i)] Especially, for $l \in \Omega_V (\bar{k})$, all elements in $\mathrm{Stab}(l) \subset \PGL(V)$ are semi-simple.
		\item[(ii)] Any point $ l \in \bP_V(\bar{k})$ can be viewed as a $\bar{k}$-valued point of $\bP_{V_{k_d}}$,
			and lies in a stratum $\CP_{V_{k_d}/V'}(\bar{k}) \subset \bP_{V_{k_d}}(\bar{k})$ of dimension less or equal to the dimension of its stratum in $\bP_V$.
			For $l \in \Omega_V (\bar{k}) \subset \bP_V (\bar{k})$, the dimension of this $\CP_{V_{k_d}/V'} \subset \bP_{V_{k_d}}$ is always
			\[ \dim \, \CP_{V_{k_d}/V'} \geq \frac{n+1}{d} -1 . \]	
			Then the equivalent geometric interpretation of condition (i) on $l$ in the Proposition \ref{Prop - Fixpoints of Omega} is that
			\[ \dim \, \CP_{V_{k_d}/V'} = \frac{n+1}{d} -1, \]
			i.e.\ the dimension of the stratum of $l \in \bP_{V_{k_d}}(\bar{k})$ is minimal.
	\end{altitemize}
\end{remarks}

\begin{proof}[Proof of Remark \ref{Rem - For fixpoints of Omega} (ii)]
	Assume $l \in \Omega_V (\bar{k}) \subset \bP_V (\bar{k})$, and let $l \in \CP_{V_{k_d}/V'}(\bar{k}) \subset \bP_{V_{k_d}}(\bar{k})$ for some subspace $V' \subset V_{k_d}$.
	By Lemma \ref{Lemma - Points of P and stratification} applied to $\bP_{V_{k_d}}$, we have $V' = \Ker(l) \cap V_{k_d}$.
	By using Lemma \ref{Lemma - Basis for points of Omega} for $l \in \CP_{V_{k_d}/V'}(\bar{k})$ and the Frobenius $F^d$ of $k_d$, we find that
	\[ \dim_{\bar{k}} \, \Span \left( l, l^{F^d},\ldots \right) = n+1 - \dim_{k_d} \, V' . \]
	But as $l \in \Omega_V (\bar{k})$, the elements $l,l^{F^d},\ldots, l^{F^{n+1-d}}$ are $\bar{k}$-linearly independent, hence
	\[ \dim_{\bar{k}} \, \Span \left( l, l^{F^d},\ldots \right) \geq \dim_{\bar{k}} \, \Span \left( l, l^{F^d},\ldots , l^{F^{n+1-d}}\right) = \frac{n+1}{d}. \]
	This yields:
	\begin{align}\label{Eq - Dimension stratum induced by Omega field extension}
		\dim \, \CP_{V_{k_d}/V'} = n - \dim_{k_d} \, V' \geq \frac{n+1}{d} - 1 .
	\end{align}
	Furthermore, we see that condition (i) in Proposition \ref{Prop - Fixpoints of Omega} is equivalent to equality in \eqref{Eq - Dimension stratum induced by Omega field extension}.	
\end{proof}

\begin{proof}[Proof of Proposition \ref{Prop - Fixpoints of Omega}]
	First, let $g$ fix $l$.
	Let $\lambda \in k_d \subset \bar{k}$, $d \in \BN$ minimal, such that $l$ is an eigenvector of $g^{\ast}$ for the eigenvalue $\lambda$, i.e.\
	\[ g^{\ast}(l)(v) = l(g(v)) = l(\lambda v)= \lambda l(v) , \]
	for all $v\in V_{\bar{k}}$.
	Moreover, as $g$ is $\Gal(\bar{k}/k)$-invariant, we find that $l^{F^i}$ is an eigenvector of $g^{\ast}$ for the eigenvalue $F^{-i} (\lambda)$, for $i\geq 0$:
	\begin{align*}
		g^{\ast}\big( l^{F^i} \big) (v) &= \big( F^{-i} \circ l \circ F^i \big) \big( g(v) \big)	\\
			&= \big( F^{-i} \circ l \big) \big(  g \big( F^i (v) \big) \big)						\\	
			&= \big( F^{-i} \circ l \big) \big( \lambda F^i (v) \big) 								\\
			&=  F^{-i}(\lambda) \, l^{F^i} (v).
	\end{align*}
	Hence $V^{\ast}_{\bar{k}}$ has a basis of eigenvectors $l,l^F,\ldots,l^{F^{n}}$, by Lemma \ref{Lemma - Basis for points of Omega}.
	Because $\lambda \in k_d$, with $d$ minimal, $F^{-d}(\lambda)= \lambda$, and $F^{-i} (\lambda) \neq \lambda $ for $ i = 1,\ldots,d-1$.
	But since all $\Gal(\bar{k}/k)$-conjugates of $\lambda$ are eigenvalues of $g^{\ast}$, it follows that $d\leq n+1$.
	Furthermore, by applying $F^{-i}$ to the eigenspace for the eigenvalue $F^{-i}(\lambda)$ we see that all eigenspaces have the same dimension.
	Hence $d \mid (n+1)$, and the eigenspace for $\lambda$ is
	\[U \defeq \Span\left(l,l^{F^d},\ldots,l^{F^{n+1-d}}\right) . \]
	Therefore, the eigenvalues of $g^{\ast}$ are in fact $\lambda,F^{-1}(\lambda),\ldots,F^{-d+1}(\lambda)$, with corresponding eigenspaces $U, F(U),\ldots,F^{d-1}(U)$.
	It follows that $g^{\ast}$ fulfils condition (ii) of the Proposition. 
	Furthermore, by applying $F^{di}$ to $l$, for any $i \geq 0$, we find that $l^{F^{di}}$ is an eigenvector for $\lambda$.
	Hence $l^{F^{di}} \in U$, and condition (i) follows.
	
	Conversely, suppose $l$ and $g$ satisfy the two conditions.
	By condition (ii), $l$ is an eigenvector of $g^{\ast} \in \PGL( V^{\ast}_{\bar{k}})$, i.e.\ $g$ fixes $l$.
	We are left to show that the definition of $g^{\ast}\in \PGL( V^{\ast}_{\bar{k}})$ by 
	\[g^{\ast} \res{F^i (U)} = F^{-i} (\lambda) \, \mathrm{id}_{F^i (U)} \, \text{, for $i= 0,\ldots,d-1$,}\]
	already implies $g\in \PGL(V)$.
	It suffices to verify that $\big(g^{F^{-1}}\big)^{\ast} = g^{\ast}$, and this may be done on the basis $l,l^F,\ldots, l^{F^n}$ of $V^{\ast}_{\bar{k}}$.
	For $i=0,\ldots,n-1$, we compute that
	\begin{align*}
		\big(g^{F^{-1}}\big)^{\ast} \big( l^{F^i} \big)	&= \big( F^{-i} \circ l \circ F^i \big)\circ\big(  F \circ  g \circ F^{-1} \big)	\\
														&= F \circ F^{-i-1} \circ l \circ F^{i+1} \circ g \circ F^{-1}						\\
														&= F \circ l^{F^{i+1}} \circ g \circ F^{-1}											\\
														&= F \circ \big( F^{-i-1}(\lambda) \, l^{F^{i+1}} \big) \circ F^{-1}					\\
														&= F^{-i} \,(\lambda) l^{F^i}
	\end{align*}
	which agrees with 
	\[ g^{\ast} \big( l^{F^i} \big) = F^{-i}\, (\lambda) l^{F^i} . \]
	Furthermore, we have $l^{F^{n+1}} \in U$ by condition (i), hence
	\[ g^{\ast} \big( l^{F^{n+1}} \big) = \lambda \, l^{F^{n+1}}. \]
	Therefore, the computation for $i=n$, analogous to the above, shows that
	\[ \big(g^{F^{-1}}\big)^{\ast} \big( l^{F^n} \big) = F^{-n} \, (\lambda) l^{F^n} = g^{\ast} \big( l^{F^n} \big) . \]
\end{proof}

\begin{proof}[Proof of Theorem \ref{Thm - Stabilizers}]
	Let $x \in \CP_{V/V'} (\bar{k})$ correspond to $l \colon V_{\bar{k}} \myrightarrowdbl{} \bar{k}$.
	Let $V''$ be a complementary space of $V'$, and let 
	\begin{align*}
		g = \left( \begin{array}{c|c}
						A & B \\
						\hline
						C & D
					\end{array} \right)
	\end{align*}
	with respect to the decomposition $V= V' \oplus V''$.
	Then by Lemma \ref{Lemma - Fixpoint reduction P}, $g$ fixes $x$ if and only if $C=0$ and the residue $\ov{D}$ of $D$ is an element of $\mathrm{Stab}(\bar{l}) \subset \PGL(V/V')$.
	But $\bar{l} \in \Omega_{V/V'} (\bar{k})$, hence Proposition \ref{Prop - Fixpoints of Omega} applies to $\ov{D}$ and $\bar{l}$.
	
	Now, let $x \in \CQ_{V'}(\bar{k})$ correspond to $r \colon V\setminus \{0\} \lra \bar{k}$.
	Let again $V''$ be a complementary space of $V'$, and consider $g$ in the above form with respect to $V= V' \oplus V''$.
	By Lemma \ref{Lemma - Fixpoint reduction Q}, $g$ fixes $x$ if and only if $V\setminus V'$ is $g$-invariant and $A$ lies in $\mathrm{Stab}(r\res{V'\setminus \{0\}}) \subset \PGL(V')$.
	Note that $V\setminus V'$ is $g$-invariant if and only if $V'$ is $g$-invariant which in turn is equivalent to $C=0$.
	Furthermore, by Lemma \ref{Lemma - Equivariant map k points Omega}, $A$ fixes $r\res{V'\setminus \{0\}}$ if and only if $A$ fixes $l' \in \CP_{V'} (\bar{k})$ where
	\[ l' \colon V^{\prime}_{\bar{k}} \myrightarrowdbl{} \bar{k} , \; v \longmapsto \frac{1}{r(v)} . \]
	Hence, we can apply Proposition \ref{Prop - Fixpoints of Omega} to $A$ and $l'$.
	
	Let $x \in \CB_{\CF} (\bar{k})$ correspond to $(l_W )_{W\subset V}$.
	Let $g$ be of the form
	\begin{align*}
		g = \left( \begin{array}{cccc}
						A_{r+1,r+1} &\multicolumn{1}{|c}{A_{r+1,r}} &\multicolumn{1}{|c}{\cdots}&\multicolumn{1}{|c}{A_{r+1,0}}\\ \cline{1-2} \cline{4-4}	
					 	A_{r,r+1}	&\multicolumn{1}{|c}{A_{r,r}}	&\multicolumn{1}{|c}{\cdots}&\multicolumn{1}{|c}{A_{r,0}}	\\ \cline{1-2} \cline{4-4}
						\vdots		&\vdots							&\ddots						&\vdots		\\ \cline{1-2} \cline{4-4}
						A_{0,r+1}	&\multicolumn{1}{|c}{A_{0,r}}		&\multicolumn{1}{|c}{\cdots}	&\multicolumn{1}{|c}{A_{0,0}} 
			\end{array} \right)
	\end{align*}
	with respect to $V = V_{r} \oplus V_{r}' \oplus \ldots\oplus V_{0}'$, for complementary spaces $V_{i-1} = V_{i} \oplus V_{i}'$, $i=0,\ldots,r$.
	By Lemma \ref{Lemma - Fixpoint reduction B}, $g$ fixes $x$ if and only if $g$ stabilizes the flag $\CF$, i.e.\ 
	\[ A_{i,j} = 0 \, \text{, for $j>i$,} \]
	and the residue $\ov{A_{i,i}}$ of $A_{i,i}$ is an element of $\mathrm{Stab}(\bar{l}_{V_{i-1}}) \subset \PGL(V_{i-1}/V_i)$.
	As $\bar{l}_{V_{i-1}} \in \Omega_{V_{i-1}/V_i}$, again Proposition \ref{Prop - Fixpoints of Omega} applies to $\ov{A_{i,i}}$ and $\bar{l}_{V_{i-1}}$, for $i=0,\ldots r+1$.
\end{proof}

\end{document}